\newtheorem{thm}{Theorem}[section]
\newtheorem*{thm*}{Theorem}
\newtheorem{lemma}[thm]{Lemma}
\newtheorem*{lemma*}{Lemma}
\newtheorem{prop}[thm]{Proposition}
\newtheorem*{prop*}{Proposition}
\newtheorem{cor}[thm]{Corollary}
\newtheorem{thmabc}{Theorem}
\newtheorem{conjabc}[thmabc]{Conjecture}
\theoremstyle{definition}
\newtheorem{ex}[thm]{Example}
\newtheorem*{ex*}{Example}
\newtheorem*{defn*}{Definition}
\newtheorem{rem}[thm]{Remark}
\newtheorem*{rem*}{Remark}
\numberwithin{equation}{section}
\renewcommand{\le}{\leqslant}
\renewcommand{\ge}{\geqslant}
\def\emptyset{\varnothing}
\def\emph{}
\DeclareTextFontCommand{\bfemph}{\bf}
\DeclareTextFontCommand{\itemph}{\it}
\def\emph{\bfemph}
\def\blankfootnote{\xdef\@thefnmark{}\@footnotetext}
\newcommand*{\textlabel}[2]{%
  \edef\@currentlabel{#1}% Set target label
  \phantomsection% Correct hyper reference link
  #1\label{#2}% Print and store label
}
\newcommand{\reg}{\ensuremath{\mathrm{reg}}}
\newcommand{\UFD}{\textup{\textsf{UFD}}}
\newcommand{\idx}[1]{\lvert#1\rvert}
\newcommand{\divides}[2]{\ensuremath{ {#1} \mid {#2} }}
\newcommand{\onto}{\twoheadrightarrow}
\newcommand{\incl}{\hookrightarrow}
\let\AA\undefined
\newcommand{\AA}{\mathbf{A}}
\newcommand{\acts}{\ensuremath{\curvearrowright}}
\newcommand{\QQ}{\mathbf{Q}}
\newcommand{\NN}{\mathbf{N}}
\newcommand{\ZZ}{\mathbf{Z}}
\newcommand{\CC}{\mathbf{C}}
\newcommand{\RR}{\mathbf{R}}
\newcommand{\fg}{\ensuremath{\mathfrak g}}
\newcommand{\Places}{\ensuremath{\mathcal V}}
\newcommand{\Divisors}{\ensuremath{\mathcal D}}
\newcommand{\noplaces}{\ensuremath{\mathrm g}}
\newcommand{\Zeta}{\ensuremath{\mathsf{Z}}}
\newcommand{\xx}{\ensuremath{\bm x}}
\newcommand{\yy}{\ensuremath{\bm y}}
\newcommand{\fp}{\mathfrak{p}}
\newcommand{\fo}{\mathfrak{o}}
\newcommand{\fO}{\mathfrak{O}}
\newcommand{\cC}{\mathcal{C}}
\newcommand{\Nil}{\ensuremath{\mathsf{N}}}
\newcommand{\Dual}{\ensuremath{\mathsf{A}}}
\newcommand{\Tail}{\ensuremath{\mathsf{B}}}
\newcommand{\CM}{\ensuremath{\mathsf{C}}}
\newcommand{\parts}{\vdash}
\DeclareMathOperator{\rad}{rad}
\DeclareMathOperator{\GL}{GL}
\DeclareMathOperator{\Ker}{Ker}
\DeclareMathOperator{\diag}{diag}
\DeclareMathOperator{\End}{End}
\DeclareMathOperator{\Uni}{U}
\DeclareMathOperator{\Mat}{M}
\DeclareMathOperator{\Spec}{Spec}
\newcommand{\Orth}{\RR_{\ge 0}}
\DeclareMathOperator{\dd}{d\!}
\newcommand{\normal}{\triangleleft}
\newcommand{\dtimes}{\ensuremath{\,\cdotp}}
\newcommand{\card}[1]{\lvert#1\rvert}
\DeclarePairedDelimiter{\abs}{\lvert}{\rvert}
\newcommand{\len}[1]{\ensuremath{\mathrm{len}({#1})}}
\newcommand{\der}{\ensuremath{\mathsf{d}}}
\DeclareMathOperator{\Real}{Re}
\DeclareMathOperator{\Tr}{Tr}
\newcommand{\us}{\ensuremath{\mathsf{\sigma}}}
\newcommand{\ind}[2]{\ensuremath{{#1}^{-1}({#2})}}
\newcommand{\llb}{\ensuremath{[\![ }}
\newcommand{\rrb}{\ensuremath{]\!] }}
\title{Enumerating submodules invariant under an endomorphism}
\author{Tobias Rossmann}
\affil{\small Fakult\"at f\"ur Mathematik, Universit\"at Bielefeld, D-33501
  Bielefeld, Germany}
\date{}
\begin{document}

\maketitle
\thispagestyle{empty}

\vspace*{-4em}
\begin{abstract}
  \small
  We study zeta functions enumerating submodules
  invariant under a given endomorphism of a finitely generated module over the
  ring of ($S$-)integers of a number field.
  In particular, we compute explicit formulae involving Dedekind zeta functions
  and establish meromorphic continuation of these zeta functions to the  complex plane.
  As an application, we show that ideal zeta functions associated with nilpotent
  Lie algebras of maximal class have abscissa of convergence $2$.
\end{abstract}

\blankfootnote{\noindent{\itshape 2010 Mathematics Subject Classification.}
  11M41, 15A04, 15A21, 17B30
  % 11M41  	Other Dirichlet series and zeta functions
  % 15A04  	Linear transformations, semilinear transformations
  % 15A21  	Canonical forms, reductions, classification 
  % 17B30       Solvable, nilpotent (super)algebras

  \noindent {\itshape Keywords.} Zeta functions, invariant submodules, nilpotent matrices,
  ideal growth, submodule growth.

  \medskip
  {\noindent
  This work is supported by the DFG Priority Programme
  ``Algorithmic and Experimental Methods in Algebra, Geometry and Number
  Theory'' (SPP 1489).}}

% \tableofcontents

%%%%%%%%%%%%%%%%%%%%%%%%%%%%%%%%%%%%%%%%%%%%%%%%%%%%%%%%%%%%%%%%%%%%%%%%
\section{Introduction}
\label{s:intro}
%%%%%%%%%%%%%%%%%%%%%%%%%%%%%%%%%%%%%%%%%%%%%%%%%%%%%%%%%%%%%%%%%%%%%%%%

\paragraph{Zeta functions derived from endomorphisms.}
Throughout, rings are assumed to be commutative and unital.
We say that a ring $R$ has \emph{polynomial submodule
  growth} if the following holds for every finitely generated $R$-module $M$:
for each $m \ge 1$, the number of submodules of additive index $m$ of $M$ is finite and
polynomially bounded as a function of $m$.
Recall that $R$ is \emph{semi-local} if it contains only
finitely many maximal ideals.

\begin{thm}[{\cite[Thm~1]{Seg97}}]
  \label{thm:segal}
  Let $R$ be a ring which is finitely generated over $\ZZ$ or semi-local with
  finite residue fields.
  Then $R$ has polynomial submodule growth if and only if it has Krull dimension at most $2$.
\end{thm}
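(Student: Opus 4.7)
The plan is to prove the two directions separately after a sequence of standard reductions. Any finitely generated $R$-module $M$ is a quotient of some $R^n$, and submodules of $M$ correspond to submodules of $R^n$ containing a fixed kernel; therefore $M$ has polynomial submodule growth whenever $R^n$ does. Next, polynomial submodule growth of $R^n$ of a given additive index $m$ can be detected locally: since submodules of finite additive index of $R^n$ correspond bijectively to tuples of submodules of $R_\fp^n$ of $\fp$-power index (for the finitely many maximal ideals $\fp$ involved), it suffices to consider a single completion $\hat R_\fp$. Under either hypothesis on $R$, this completion is a complete Noetherian local ring with finite residue field, of Krull dimension at most $\dim R$, and \emph{every} such ring arises this way. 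So we are reduced to showing that a complete Noetherian local ring $\fO$ with finite residue field $k$ has polynomial submodule growth if and only if $\dim\fO\le 2$.

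For the sufficiency I would argue by cases on $\dim \fO$. When $\dim \fO=0$, $\fO$ is a finite ring, so there are only finitely many modules at all. When $\dim \fO=1$, Cohen's structure theorem plus normalization makes $\fO$ module-finite over a complete discrete valuation ring $\fo$; since submodules of $\fO^n$ of index $\lvert k\rvert^j$ pull back to $\fo$-submodules of an $\fo$-module of bounded rank, the classical Hall-polynomial enumeration of submodules of $\fo^N$ gives a polynomial bound in $\lvert k\rvert^j$. For $\dim\fO=2$, by Cohen's theorem $\fO$ is module-finite over a 2-dimensional regular local ring of the form $W[[t]]$ or $k[[s,t]]$, and the bound is obtained by exhibiting a two-step filtration of any submodule of $\fO^n$ (say by its $(s)$- and $(s,t)$-primary components, or via elementary divisor-type normal forms over the 2-dimensional base), reducing the count to products of polynomially many dim~1 counts. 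This dim~2 case is where I expect the most work: the polynomial bound must be preserved across the two nested valuations, so one needs a careful uniform estimate (for instance by induction on $n$ using the structure of submodules of $\fO \oplus \fO$ and interpreting them via matrices in Howell or Smith-type normal form over a principal ideal quotient).

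For the necessity, I would show that if $\dim \fO\ge 3$ then already $\fO$ itself has super-polynomially many ideals of some indices $\lvert k\rvert^N$. After replacing $\fO$ by a 3-dimensional quotient $\fO/P$ (for a minimal prime $P$ with $\dim \fO/P = 3$) and passing further to a regular parameter subring, it suffices to exhibit super-polynomial ideal growth in $\fO_0:=k[[x,y,z]]$, or, in mixed characteristic, in $W(k)[[x,y]]$. In $\fO_0$ one exhibits for each $N$ an injective family of ideals parametrised by $k$-subspaces of $\fO_0/\mathfrak m^N$ containing $\mathfrak m^{\lceil N/2\rceil}$; the number of such subspaces is a $k$-binomial coefficient which grows like $\lvert k\rvert^{c N^2}$, whereas the additive index is at most $\lvert k\rvert^{O(N^3)}$, so the count exceeds every polynomial in the index. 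The main obstacle here is really the bookkeeping that converts such subspace counts into \emph{ideal} counts: one must pick the family so that distinct subspaces yield distinct ideals and control the indices, which I would do by working with $\mathfrak m^a$-primary ideals sandwiched between $\mathfrak m^a$ and $\mathfrak m^{a-1}$ and invoking the standard lower bound on Gaussian binomial coefficients. Combined with the reduction above, these two constructions prove the theorem.
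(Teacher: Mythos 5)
First, note that the paper does not prove this statement at all: it is quoted verbatim from Segal \cite[Thm~1]{Seg97} as background, so there is no internal proof to compare with; your proposal has to be judged on its own merits, and as it stands it has two genuine gaps. The more serious one is the local--global reduction in the sufficiency direction. For $R$ finitely generated over $\ZZ$ (say $R=\ZZ[x]$, which has infinitely many maximal ideals above every rational prime), the set of maximal ideals that can support a finite quotient of order $\le m$ is finite for each fixed $m$ but grows with $m$, so it is simply not true that ``it suffices to consider a single completion $\hat R_{\fp}$''. Polynomial submodule growth is equivalent to finiteness of the abscissa of convergence of a global Dirichlet series which is an Euler-type product over \emph{all} maximal ideals, and the global growth degree genuinely exceeds the local ones (already for $\ZZ[x]$ the local ideal-counting functions at maximal ideals have abscissa $1$ while the global one has abscissa $2$). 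To make your reduction work you would need (i) local polynomial bounds that are \emph{uniform} in the maximal ideal (uniform degree and constants), (ii) a count of the maximal ideals of bounded residue cardinality, and (iii) an argument that the resulting superposition still has polynomial growth --- compare the paper's own Theorem~\ref{thm:ZZX}, where even a single infinite product $\prod_j\zeta(js-j+1)$ required a separate convergence estimate. None of this appears in the proposal, and it is exactly where the hypothesis $\dim R\le 2$ does its global work. (Two smaller points: ``every such complete local ring arises this way'' is false and fortunately not needed; and in the semi-local branch $R$ is not assumed Noetherian, so ``complete Noetherian local ring'' requires justification.)

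The second gap is in the necessity direction: after reducing to a $3$-dimensional complete local domain $\fO$, you pass to a regular subring $A=k[[x,y,z]]$ (or $W(k)[[x,y]]$) and propose to exhibit super-polynomial \emph{ideal} growth of $A$. That transfer points the wrong way: an $\fO$-module has \emph{fewer} $\fO$-submodules than $A$-submodules, and pushing ideals forward via $I\mapsto I\fO$ neither preserves distinctness nor controls the index, so super-polynomial growth for $A$ gives no lower bound for $\fO$ (equivalently, for $R$). The fix is your own sandwich idea applied to the $3$-dimensional ring itself, with no detour: every additive subgroup between $\mathfrak m^{a}$ and $\mathfrak m^{a-1}$ is automatically an ideal, $\dim_{k}\mathfrak m^{a-1}/\mathfrak m^{a}$ grows like $a^{2}$ because the local ring has dimension $3$, so one gets at least $q^{ca^{4}}$ ideals of index at most $q^{Ca^{3}}$ --- note also that your first formulation (subspaces of $\fO/\mathfrak m^{N}$ containing $\mathfrak m^{\lceil N/2\rceil}$) does not produce ideals, only the sandwiched version does. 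Finally, the positive local statement in dimension $2$, which is the heart of the theorem, is only gestured at (``two-step filtration'', ``Howell or Smith-type normal form''); it is plausible (e.g.\ via restriction of scalars to $k[[s,t]]$ or $W[[t]]$ and the known colength generating functions), but as written it is a plan rather than a proof.
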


Let $R$ be a ring with polynomial submodule growth, 
let $M$ be a finitely generated
$R$-module, and let $A \in \End_R(M)$.
For $m \ge 1$, let $a_m(A,R)$ denote the number of $A$-invariant
$R$-submodules $U \le M$ with $\idx{M:U} = m$.
We define a zeta function
\[
\zeta_{A,R}(s) := \sum_{m=1}^\infty a_m(A,R) m^{-s}
\]
and we let $\alpha_{A,R} < \infty$ denote its abscissa of convergence;
it is well-known that $\alpha_{A,R}$ is precisely the degree of polynomial
growth of the partial sums $a_1(A,R) + \dotsb + a_m(A,R)$ as a function of $m$.

The zeta functions $\zeta_{A,R}(s)$ belong to the larger
theory of subobject zeta functions; for a recent survey of the area, see
\cite{VollSurvey}.
Indeed, using the terminology from \cite{topzeta}, $\zeta_{A,R}(s)$ is the
submodule zeta function $\zeta_{R[A] \acts M}(s)$ of the enveloping algebra
$R[A] := \sum\limits_{i=0}^\infty R \dtimes A^i \subset \End_R(M)$ of $A$ acting on $M$.

The main results of this article, Theorems~\ref{Thm:global}--\ref{Thm:poles},
constitute a rather exhaustive analysis of the zeta functions $\zeta_{A,R}(s)$
in the cases that $R$ is the ring of ($S$-)integers of a number field
or a (generic) completion of such a ring.
In particular, our findings provide further evidence in support of the
author's general conjectures on submodule zeta functions stated in \cite[\S 8]{topzeta}.

\paragraph{Related work: invariant subspaces.}
The study of subspaces invariant under an endomorphism has a long history.
For a finite-dimensional vector space $V$ over the real or complex numbers and
$A \in \End(V)$, Shayman~\cite{Sha82}
investigated topological properties of the compact analytic space $S_A$ of $A$-invariant subspaces of
$V$.
In particular, if $A$ is nilpotent, then he found the subspace $S_A(d) \subset
S_A$ of $d$-dimensional $A$-invariant subspaces of $V$ to be connected but
usually singular.

For an arbitrary ground field $F$ and a fixed number $n$, Ringel and
Schmidmeier~\cite{RS08} studied the category of triples $(V,U,T)$, where $V$ is
a finite-dimensional vector space over $F$, $T\in \End_F(V)$ satisfies $T^n =0$,
and $U\le V$ is $F$-invariant.
While their point of view is rather different from ours, we would like to
point out that they found the case of exponent $n \ge 7$ to involve instances of
so-called ``wild'' representation type.

\paragraph{Ideal zeta functions.}
In our study of the zeta functions $\zeta_{A,R}(s)$, we will frequently encounter
another special case of submodule zeta functions, namely ideal zeta functions.
Let $R$ be a ring with polynomial submodule growth and let $\mathsf A$ be a
possibly non-associative $R$-algebra whose underlying $R$-module is finitely
generated.
We write $\mathsf I \normal_R \mathsf A$ to indicate that $\mathsf I$ is a
two-sided ideal of $\mathsf A$ which is also an $R$-submodule.
The \emph{ideal zeta function} (cf.\ \cite{GSS88}) of $\mathsf A$ is
$$\zeta_{\mathsf A}(s) := \sum_{\substack{\mathsf I \normal_R \mathsf A
    \\\idx{\mathsf A:\mathsf I}<\infty}} \idx{\mathsf A:\mathsf I}^{-s}.$$
For example, the ideal zeta function of the ring of integers of a number field $k$
is precisely the Dedekind zeta function of $k$.
In particular, the ideal zeta function of $\ZZ$ is the Riemann zeta function
$\zeta(s)$.
As explained in \cite[Rem.\ 2.2(ii)]{topzeta}, ideal zeta functions are in fact
a special case of the submodule zeta functions discussed below.

\paragraph{Global setup, Euler products, and growth rates.}
For the remainder of this article, let~$k$ be a number field with ring of
integers $\fo$.

Let $\Places_k$ denote the set of non-Archimedean places of $k$.
For $v \in \Places_k$, let $k_v$ be the $v$-adic completion of $k$ and 
let $\fo_v$ be its valuation ring.
For $S\subset \Places_k$,
let $$\fo_S = \bigcap\limits_{v\in\Places_k\setminus S} \fo_v \cap k$$
be the usual ring of $S$-integers of $k$.

In the following, we investigate $\zeta_{A,R}(s)$, where $A\in \End_R(M)$ and
$R = \fo_v$ or $R = \fo_S$ for $v \in \Places_k$ or a finite set $S\subset
\Places_k$, respectively.
The techniques that we use are predominantly local and valid for almost all
places of~$k$ (i.e.\ for all but finitely many places);
the exclusion of a finite number of exceptional places is common and
frequently unavoidable in the theory of subobject zeta functions.

If $M$ is a finitely generate $\fo_S$-module,
then $M \otimes_{\fo_S} \fo_v$ is a free $\fo_v$-module for almost all $v\in
\Places_k\setminus S$.
We thus lose little by henceforth assuming that $M = \fo_S^n$ and
$A \in \Mat_n(\fo_S)$, where $\Mat_n(R)$ denotes the algebra of $n\times n$
matrices over a ring $R$.
Note that if $A \in \Mat_n(k)$, then $A \in \Mat_n(\fo_v)$ for almost all $v \in
\Places_k$.
In order to exclude trivialities, unless otherwise stated, we always assume that $n > 0$.
Being instances of submodule zeta functions,
the zeta functions $\zeta_{A,\fo_S}(s)$ admit natural Euler product factorisations.

\begin{prop*}[Cf.\ {\cite[Lemma~2.3]{topzeta}}]
  Let $A \in \Mat_n(\fo_S)$ for finite $S \subset \Places_k$.
  Then $$\zeta_{A,\fo_S}(s) = \prod\limits_{v \in \Places_k\setminus S} \zeta_{A,\fo_v}(s).$$
\end{prop*}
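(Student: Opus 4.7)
The plan is to reduce the identity to a bijection between $A$-invariant $\fo_S$-submodules of finite index in $\fo_S^n$ and families $(U_v)_{v \in \Places_k\setminus S}$, where each $U_v$ is an $A$-invariant $\fo_v$-submodule of $\fo_v^n$ of finite index and $U_v = \fo_v^n$ for almost all $v$. Given the forward assignment $U \mapsto (U\otimes_{\fo_S}\fo_v)_v$, the key point is that the finite $\fo_S$-module $\fo_S^n/U$ decomposes canonically as the direct sum of its $v$-primary components, indexed by the maximal ideals of $\fo_S$, namely by $v \in \Places_k\setminus S$. Writing $U_v := U\otimes_{\fo_S}\fo_v$, this decomposition yields the index multiplicativity
$$\idx{\fo_S^n:U} = \prod_{v\in\Places_k\setminus S}\idx{\fo_v^n:U_v},$$
with only finitely many factors distinct from $1$. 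Conversely, any family $(U_v)_v$ with $U_v=\fo_v^n$ outside a finite set reassembles as $U := \bigcap_v(U_v \cap k^n) \le \fo_S^n$; because $\fo_S$ is a Dedekind domain, this local-global principle for finite-index sublattices produces the mutually inverse correspondence.

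Next I would verify that the bijection respects $A$-invariance. Since $A\in\Mat_n(\fo_S) \subset \Mat_n(\fo_v)$ acts by the same matrix in every completion, $A\cdot U \le U$ inside $\fo_S^n$ is equivalent to $A\cdot U_v \le U_v$ inside each $\fo_v^n$: one direction follows by applying $\otimes_{\fo_S}\fo_v$, the other because the intersection $\bigcap_v(U_v\cap k^n)$ is carried into itself by any operator that preserves each factor. Combined with index multiplicativity, the Euler product identity then falls out by grouping Dirichlet coefficients: for each $m\ge 1$, the $m$-th coefficient on each side enumerates the same finite set of $A$-invariant lattices of index $m$.

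Finally, absolute convergence of both sides on a common right half-plane is ensured by Theorem~\ref{thm:segal}, since $\fo_S$ has Krull dimension~$1$, so $\zeta_{A,\fo_S}(s)$ converges absolutely for $\Real(s)$ sufficiently large and the Euler product converges there as well. The only delicate point—rather than a serious obstacle—is checking that the global reassembly $U = \bigcap_v(U_v \cap k^n)$ is indeed a finite-index $\fo_S$-submodule of $\fo_S^n$ with localisations $U_v$; this is precisely the content of the elementary local-global correspondence for finite-index sublattices, and it is what legitimises extending the Euler product over all (but finitely many) places of $k$.
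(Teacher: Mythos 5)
Your argument is correct and is essentially the standard localisation proof that the paper itself omits, deferring instead to the cited \cite[Lemma~2.3]{topzeta}: decompose the finite quotient $\fo_S^n/U$ into its $v$-primary parts to get index multiplicativity, reassemble a global lattice from local data via the usual finite-index lattice correspondence over the Dedekind domain $\fo_S$, check $A$-invariance on both sides, and invoke polynomial submodule growth (Theorem~\ref{thm:segal}) for convergence in a half-plane. No gaps worth flagging; this matches the intended proof.
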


The following is a consequence of deep results of du~Sautoy and Grunewald on
subobject zeta functions expressible in terms of what they call ``cone integrals''.

\begin{thm}[{Cf.\ \cite[\S 4]{dSG00}}]
  \label{thm:dSG}
Let $A \in \Mat_n(\fo_S)$ for finite $S \subset \Places_k$. Then:
\begin{enumerate}
  \item
  \label{thm:dSG1}
    The abscissa of convergence $\alpha_{A,\fo_S}$ of $\zeta_{A,\fo_S}(s)$ is a rational number.
  \item
  \label{thm:dSG2}
  $\zeta_{A,\fo_S}(s)$ admits meromorphic continuation to $\{ s \in \CC : \mathrm{Re}(s) >
  \alpha_{A,\fo_S} - \delta\}$ for some $\delta > 0$.
  This continued function is regular on the line $\mathrm{Re}(s) = \alpha_{A,\fo_S}$
  except for a pole at $s = \alpha_{A,\fo_S}$.
\item
  \label{thm:dSG3}
  Let $\beta_{A,\fo_S}$ denote the multiplicity of the pole of
  (the meromorphic continuation of) $\zeta_{A,\fo_S}(s)$ at $\alpha_{A,\fo_S}$.
  Then there exists a real constant $c_{A,\fo_S} > 0$ such that
  \[
  a_1(A,\fo_S) + \dotsb + a_m(A,\fo_S) \sim c_{A,\fo_S} \dtimes m^{\alpha_{A,\fo_S}} (\log m)^{\beta_{A,\fo_S}-1}.
  \]
  where $f(m) \sim g(m)$ signifies that $f(m)/g(m) \to 1$ as $m \to \infty$.
\end{enumerate}
\end{thm}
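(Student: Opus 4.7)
The plan is to reduce the entire theorem to the general results on cone integrals of du~Sautoy and Grunewald in \cite[\S 4]{dSG00} by exhibiting $\zeta_{A,\fo_S}(s)$ as an Euler product of $v$-adic cone integrals whose defining data is uniform in $v$. All three parts then follow from the corresponding statements in loc.\ cit.

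The first step is to pass to the local factors. By the Euler product of the preceding proposition, it suffices to analyse $\zeta_{A,\fo_v}(s)$ uniformly for $v \in \Places_k \setminus S$. I would parametrise finite-index submodules $U \le \fo_v^n$ by their Hermite normal forms, so that such $U$ correspond to non-singular upper-triangular matrices $M \in \Mat_n(\fo_v)$ modulo the left action of the diagonal units, with $[\fo_v^n : U] = \lvert \det(M)\rvert_v^{-1}$. The $A$-invariance condition $AU \subseteq U$ translates into the matrix equation $M^{-1} A M \in \Mat_n(\fo_v)$; expanding $M^{-1}$ via cofactors and clearing denominators, this becomes a finite list of divisibility conditions $v(g_i) \le v(h_i)$ between polynomial expressions in the entries of $M$ and of $A$.

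The second step is to recognise that the resulting $v$-adic integral representation of $\zeta_{A,\fo_v}(s)$ is a cone integral in the sense of \cite[\S 4]{dSG00}: the integration variables are the entries of $M$, the integrand is $\lvert\det(M)\rvert_v^s$ (up to a normalisation by the $v$-adic Haar measure), and the integration domain is cut out by the upper-triangularity condition together with the cone conditions arising from the $A$-invariance constraints. Since the underlying polynomial data is defined already over $\fo_S$ and works uniformly for almost all~$v$, this realises the Euler product as precisely the type treated in \cite[\S 4]{dSG00}. Parts \eqref{thm:dSG1} and \eqref{thm:dSG2} are then immediate consequences of the main theorems of that section, and \eqref{thm:dSG3} follows from \eqref{thm:dSG2} via a standard Tauberian argument of Delange--Ikehara type applied to the Dirichlet series with non-negative coefficients $\zeta_{A,\fo_S}(s)$.

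The main obstacle lies in the first step: the cone integral structure of unadorned submodule zeta functions is by now classical (via Hermite normal forms), but one still has to verify that the additional $A$-invariance condition can be encoded by genuine cone conditions, that is, by conjunctions of polynomial divisibility inequalities, rather than by more general semi-algebraic conditions outside the cone integral framework. Once this encoding is secured and the bookkeeping at the finitely many excluded places of $S$ is absorbed into a finite prefactor, the rest of the argument is a direct invocation of \cite[\S 4]{dSG00}.
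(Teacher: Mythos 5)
Your proposal is correct and takes essentially the same route as the paper, which realises each local factor as a $p$-adic integral over upper-triangular matrices with the invariance condition encoded by divisibility (membership) conditions --- see Proposition~\ref{prop:coneint}, taken from \cite[\S 3]{GSS88} --- and then invokes \cite[\S 4]{dSG00}, whose results already include the Tauberian asymptotic of part~(\ref{thm:dSG3}). The only point you pass over is that \cite{dSG00} treat $k=\QQ$, $S=\emptyset$; the paper addresses the transfer to number fields and $S$-integers in Remark~\ref{rem:alphainv}(\ref{rem:alphainv1})--(ii) by noting that the arguments carry over (cf.\ \cite{AKOV13} and \cite[\S 4]{DV15}).
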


\paragraph{Matrices, polynomials, and partitions.}
Prior to stating our main results, we need to establish some notation and recall
some terminology.
By a \emph{partition} of an integer $n \ge 0$, we mean a non-increasing sequence
$\bm\lambda = (\lambda_1,\dotsc,\lambda_r)$ of positive integers with $n =
\lambda_1 + \dotsb + \lambda_r$;
for background, we refer to \cite{Mac79}.
We write $\abs{\bm\lambda} := n$,
$\len{\bm\lambda} := r$, and $\lambda_{-1} := \lambda_r$.
We write $\bm\lambda \parts n$ to signify that $\bm\lambda$ is a partition of $n$.
For $i \ge 0$, define $\us_i(\bm\lambda) := \lambda_1 + \dotsb + \lambda_i$.
For $1\le j \le \abs{\bm\lambda}$, let $\ind{\bm\lambda} j$ be the unique
number $i \in \{ 1,\dotsc, \len{\bm\lambda} \}$ with $\us_{i-1}(\bm\lambda) < j \le \us_i(\bm\lambda)$;
equivalently,
$\ind{\bm\lambda} j = \min\Bigl( i \in \{ 1,\dotsc, \len{\bm\lambda}\} : j \le \us_i(\bm\lambda)\Bigr)$.
The \emph{dual partition} of $\bm\lambda$ is denoted by $\bm\lambda^*$.
Thus, if $\abs{\bm\lambda} >0$, then $\bm\lambda^* = (\mu_1,\dotsc,\mu_t)$, where $t = \lambda_1$ and
$\mu_i = \#\bigl\{ i \in \{ 1,\dotsc,\len{\bm\lambda} \} : \lambda_i \ge i\bigr\}$.

For a monic polynomial $f = X^m + a_{m-1}X^{m-1} + \dotsb + a_0$,
let
\[
\CM(f) = 
\begin{bmatrix}
0 & 1\\
  & \ddots & \ddots \\
  & & 0 & 1\\
-a_0 & \hdots & -{a_{m-2}} & -a_{m-1}
\end{bmatrix}
\]
be its companion matrix.
Let $A \in \Mat_n(k)$.
It is well-known that there are monic irreducible polynomials $f_1,\dotsc,f_e
\in k[X]$
and partitions $\bm\lambda_1, \dotsc,\bm\lambda_e$ of
positive integers $n_1$,$\dotsc$, $n_e$ such that $n = \deg(f_1) n_1 + \dotsb + \deg(f_e) n_e$ 
and $A$ is similar to its (primary) rational canonical form
\[
\diag\Bigl(
\CM\Bigl(f_1^{\lambda_{1,1}}\Bigr),
\dotsc,
\CM\Bigl(f_1^{\lambda_{1,\len{\bm\lambda_1}}}\Bigr),
\,\,\dotsc\dotsc, \,\,
\CM\Bigl(f_e^{\lambda_{e,1}}\Bigr),
\dotsc\,
\CM\Bigl(f_e^{\lambda_{e_,\len{\bm\lambda_e}}}\Bigr)
\Bigr)
\]
over $k$.
We call $( (f_1,\bm\lambda_1), \dotsc, (f_e,\bm\lambda_e))$ an
\emph{elementary divisor vector} of $A$ over $k$;
any two elementary divisor vectors of $A$ coincide up to reordering.

\paragraph{Main results.}
Recall that $k$ is a number field with ring of integers $\fo$.
Throughout, $\fp_v \in \Spec(\fo)$ denotes the prime ideal corresponding
to a place $v \in \Places_k$ and $q_v = \card{\fo/\fp_v}$ denotes the residue field size
of $k_v$.
Our global main result is the following.

\begin{thmabc}
  \label{Thm:global}
  Let $S\subset \Places_k$ be finite and $A \in \Mat_n(\fo_S)$.
  Let $((f_1,\bm\lambda_1), \dotsc, (f_e,\bm\lambda_e))$ be an elementary divisor
  vector of $A$ over $k$.
  Write $k_i = k[X]/(f_i)$.
  Let $\fo_i$ denote the ring of integers of $k_i$.
  Let $S_i = \{ w \in \Places_{k_i} : \exists v \in S. \divides w v\}$
  and write $\fo_{i,S_i} := (\fo_i)_{S_i}$.
  Then the following hold:
  \begin{enumerate}
    \item
      \label{Thm:global1}
      There are finitely many places $w_1,\dotsc,w_\ell \in \Places_k \setminus S$ and
      associated rational functions $W_1,\dotsc,W_\ell \in \QQ(X)$ such that
      \begin{equation}
        \label{eq:global}
        \zeta_{A,\fo_S}(s) = \prod_{u=1}^\ell W_u(q_{w_u}^{-s}) \times \prod_{i=1}^e
        \prod_{j=1}^{\abs{\bm\lambda_i}} \zeta_{\fo_{i,S_i}}\bigl( (\bm\lambda_i^*)^{-1}(j) \dtimes s - j + 1\bigr).
      \end{equation}
      In particular, $\zeta_{A,\fo_S}(s)$ admits meromorphic continuation to the
      complex plane.
    \item
      \label{Thm:global2}
      The abscissa of convergence $\alpha_{A,\fo_S}$ of $\zeta_{A,\fo_S}(s)$ satisfies
      $\alpha_{A,\fo_S} = \max\limits_{1\le i \le e} \len{\bm\lambda_i} \in \NN$.
    \item
      \label{Thm:global3}
      Let $I := \bigl\{ i \in \{ 1,\dotsc,e\} : \len{\bm\lambda_i} =
      \alpha_{A,\fo_S}\bigr\}$.
      Then the multiplicity $\beta_{A,\fo_S}$ of the pole of $\zeta_{A,\fo_S}(s)$
      at $\alpha_{A,\fo_S}$ satisfies $\beta_{A,\fo_S} = \sum\limits_{i\in I} \lambda_{i,-1}$.
    \end{enumerate}
\end{thmabc}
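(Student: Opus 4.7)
The plan is to prove (i) via a local computation at almost all places of $k$, absorbing the remaining finitely many factors into $\prod_u W_u(q_{w_u}^{-s})$, and then to read off (ii) and (iii) from the pole structure of the explicit formula. By the Euler product proposition, it suffices to describe $\zeta_{A,\fo_v}(s)$ for almost all $v\in\Places_k\setminus S$. At a ``good'' place $v$ -- in particular, avoiding the primes dividing the discriminants of the $f_i$ and the denominators of the basis change that puts $A$ into rational canonical form -- the Chinese Remainder Theorem gives $\fo_v[X]/(f_i^\lambda) \cong \prod_{w\mid v}\fo_v[X]/(g_{i,w}^\lambda)$ for each $\lambda\ge 1$, where $g_{i,w}$ is the factor of $f_i$ in $\fo_v[X]$ corresponding to the place $w$ of $k_i$. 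The orthogonality of the associated idempotents separates the $A$-invariant submodule enumeration across pairs $(i,w)$, giving
\[
\zeta_{A,\fo_v}(s) \;=\; \prod_{i=1}^e \prod_{w\mid v}\zeta_{A_{i,w},\fo_v}(s),
\]
where $A_{i,w}$ is the restriction of $A$ to $M_{i,w} := \bigoplus_j \fo_v[X]/(g_{i,w}^{\lambda_{i,j}})$.

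At a good place, Hensel's lemma provides ring isomorphisms $\fo_v[X]/(g_{i,w}^\lambda) \cong \fo_{i,w}[T]/(T^\lambda)$ under which $X$ corresponds to a scalar in $\fo_{i,w}$ plus a unit multiple of the nilpotent element $T$; consequently, the $A$-invariant $\fo_v$-submodules of $M_{i,w}$ are precisely the $T$-invariant $\fo_{i,w}$-submodules, and (since an index of a submodule is determined by the size of the quotient group) $\zeta_{A_{i,w},\fo_v}(s)$ equals the submodule zeta function of a nilpotent matrix of Jordan type $\bm\lambda_i$ on $\fo_{i,w}^{|\bm\lambda_i|}$. The crux of the argument -- and, I expect, the main obstacle -- is the explicit formula
\[
\zeta_{N,\fo_{i,w}}(s) \;=\; \prod_{j=1}^{|\bm\lambda|}\bigl(1 - q_w^{-((\bm\lambda^*)^{-1}(j)\, s - j + 1)}\bigr)^{-1}
\]
for a nilpotent matrix $N$ of type $\bm\lambda$ over a complete DVR with residue size $q_w$. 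I would approach this by a direct enumeration of $T$-invariant lattices, stratified by the Jordan type of the cokernel and exploiting the combinatorics of the conjugate partition. Taking the product over $w\mid v$ and then over good $v$ reassembles the nilpotent local factors into the $\prod_{i,j}\zeta_{\fo_{i,S_i}}((\bm\lambda_i^*)^{-1}(j)\, s - j + 1)$ of (\ref{eq:global}), with the finitely many bad-place corrections gathered into $\prod_u W_u(q_{w_u}^{-s})$; meromorphic continuation is then inherited from the Dedekind zeta factors.

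For (ii) and (iii), I would analyse the poles of the explicit formula. The factor indexed by $(i,j)$ is a simple pole at $s = j/(\bm\lambda_i^*)^{-1}(j)$, and since $\bm\lambda_i^*$ is non-increasing this ratio is the average of the first $(\bm\lambda_i^*)^{-1}(j)$ parts of $\bm\lambda_i^*$, hence maximised at the value $\len{\bm\lambda_i}$ attained for $(\bm\lambda_i^*)^{-1}(j) = 1$; this gives $\alpha_{A,\fo_S} = \max_i\len{\bm\lambda_i} \in \NN$. The maximum is also attained for each $i' \in \{2,\dots,\lambda_{i,-1}\}$ via $j = i'\alpha_{A,\fo_S}$, since this requires $\bm\lambda_i^*$ to have its first $i'$ parts equal -- equivalently, all parts of $\bm\lambda_i$ to be $\ge i'$ -- which happens precisely when $i' \le \lambda_{i,-1}$. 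Summing the resulting simple poles over $i \in I$ yields the multiplicity $\sum_{i\in I}\lambda_{i,-1}$. A remaining subtlety is that the finite rational factors $W_u(q_{w_u}^{-s})$ could in principle create or cancel poles at $s = \alpha_{A,\fo_S}$; I would rule this out by invoking Theorem~\ref{thm:dSG}, which asserts that the abscissa is a genuine pole aligned with the leading polynomial growth of the partial sums, thereby forcing consistency with the pole count read off from the Dedekind factors.
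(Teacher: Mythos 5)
Your reduction steps (Euler product, CRT/idempotent splitting across the primary components and the places $w \mid v$ of the $k_i$, Hensel lifting to identify the action with a nilpotent matrix over $\fo_{i,w}$) run parallel to the paper's \S\S 2--3, and your pole bookkeeping for (ii)--(iii) from the Dedekind factors matches the computation in \S 5. But the proposal has a genuine gap exactly where you flag ``the main obstacle'': the explicit formula $\zeta_{\Nil(\bm\lambda),\fO_K}(s)=\prod_{j}\bigl(1-q_K^{-((\bm\lambda^*)^{-1}(j)s-j+1)}\bigr)^{-1}$ is asserted, not proved, and ``direct enumeration of $T$-invariant lattices stratified by the Jordan type of the cokernel'' is a plan rather than an argument --- it is not clear such a stratification yields a tractable count. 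This formula is the heart of the matter, and the paper devotes all of \S 4 to it: it replaces $\Nil(\bm\lambda^*)$ by a permutation-conjugate ``dual'' normal form $\Dual(\bm\lambda)$, expresses the zeta function as a cone integral over upper-triangular matrices (Proposition~\ref{prop:coneint}), exhibits a recursive structure of the integration domain (Corollary~\ref{cor:rec}), and uses measure-preserving $\ZZ$-defined coordinate changes (Lemma~\ref{lem:coc}) to reduce to a monomial integral. Without some substitute for this analysis, your proof of (i) is incomplete. (A smaller issue in the same vein: when you pass from $A$-invariant $\fo_v$-submodules to $T$-invariant $\fo_{i,w}$-submodules you implicitly need the $\fo_{i,w}$-scalar action to lie in the $\fo_v$-algebra generated by $A$; this is the content of Lemma~\ref{lem:diagin} and does require an argument.)

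The second gap is your treatment of the exceptional factors in (ii)--(iii). Theorem~\ref{thm:dSG}, as stated, only says that $\zeta_{A,\fo_S}(s)$ has a pole at its (unknown) abscissa governing the growth of the partial sums; it does not by itself prevent a bad local factor $\zeta_{A,\fo_v}(s)$, $v$ in the exceptional set, from having a real pole at or to the right of $\max_i\len{\bm\lambda_i}$, which would raise the abscissa or inflate the multiplicity, nor does it rule out interference at $s=\alpha$. What is actually needed --- and what the paper proves as Lemma~\ref{lem:alpha} --- is the strict inequality $\alpha_{A,\fo_v}<\alpha_{A,\fo_S}$ for every place $v\notin S$, which rests on the finer cone-integral results of du~Sautoy--Grunewald (not just the statement of Theorem~\ref{thm:dSG}); combined with non-negativity of the coefficients of the finite product $F(s)=\prod_{v\in S'\setminus S}\zeta_{A,\fo_v}(s)$, this gives $F(\alpha)>0$ and hence that removing or adjoining finitely many local factors changes neither $\alpha$ nor $\beta$. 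You should either invoke this lemma (or prove the local-vs-global strict inequality) or find another quantitative bound on the bad local factors; the crude bound by the full submodule zeta function of $\fo_v^n$ has abscissa $n$ and is too weak.
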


As we will see, part~(\ref{Thm:global1}) is in fact a consequence of a similar 
formula~\eqref{eq:local} which is valid for almost all local zeta functions $\zeta_{A,\fo_v}(s)$. 
The exceptional factors $W_u(q_{w_u}^{-s})$ in~\eqref{eq:global}
cannot, in general, be omitted, see Example~\ref{ex:exceptional} below.

We note that the special case $A = 0_n$ in Theorem~\ref{Thm:global} is
consistent with the well-known formula $\zeta_{\fo_S}(s) \zeta_{\fo_S}(s-1)
\dotsb \zeta_{\fo_S}(s - (n-1))$ for the zeta function enumerating all
finite-index submodules of $\fo_S^n$.
We further note that the shape of the right-hand side of \eqref{eq:global} is
rather similar to that of Solomon's formula \cite[Thm~1]{Sol77} for the zeta
function enumerating submodules of finite index of a $\ZZ G$-lattice for a
finite group~$G$.

\medskip
Local functional equations under ``inversion of the residue field size'' are a
common, but not universal, phenomenon in the theory of subobject zeta functions;
see \cite{Vol10,Vol16}.
For an extension of number fields $k'/k$ and $v \in \Places_k$, let
$\noplaces_v(k')$\label{lab:noplaces} denote the number of places of $k'$ which divide $v$.

\begin{thmabc}
  \label{Thm:FEqn}
  Let $A \in \Mat_n(k)$ and let $((f_1,\bm\lambda_1),\dotsc,(f_e,\bm\lambda_e))$ be an
  elementary divisor vector of $A$ over $k$.
  Write $\bm\mu_i := \bm\lambda_i^*$.
  Then, for almost all $v \in \Places_k$,
  \begin{equation}
    \label{eq:feqn}
    \zeta_{A,\fo_v}(s) \Bigg\vert_{q_v^{\phantom 1}\to q_v^{-1}}
    = (-1)^{\sum\limits_{i=1}^e \abs{\bm\lambda_i} \dtimes \noplaces_v(k[X]/(f_i))}
    \dtimes
    q_v^{\sum\limits_{i=1}^e \deg(f_i) \binom{\abs{\bm\lambda_i}}{2} -
      \Bigl(\sum\limits_{i=1}^ e \deg(f_i) \sum\limits_{j=1}^{\lambda_{i1}}j
      \mu_{ij}\Bigr)s}
    \dtimes \zeta_{A,\fo_v}(s).
  \end{equation}
\end{thmabc}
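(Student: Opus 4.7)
The plan is to derive the functional equation directly from the almost-everywhere local analogue of~\eqref{eq:global} alluded to in the remark following Theorem~\ref{Thm:global}. For all but finitely many $v \in \Places_k$, no exceptional factor $W_u$ occurs, and
\[
\zeta_{A,\fo_v}(s) \;=\; \prod_{i=1}^e \prod_{j=1}^{\abs{\bm\lambda_i}} \zeta_{\fo_i \otimes_\fo \fo_v}\bigl(\ind{\bm\lambda_i^*}{j}\dtimes s - j + 1\bigr),
\]
with $\zeta_{\fo_i \otimes_\fo \fo_v}(t) = \prod_{w \mid v}(1-q_w^{-t})^{-1}$ running over the places $w$ of $k_i$ above~$v$, which are unramified for almost all $v$, so that $q_w = q_v^{f(w|v)}$ and $\sum_{w \mid v} f(w|v) = \deg(f_i)$.

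First, I would apply the elementary identity
\[
(1-q_w^{-t})^{-1}\bigm\vert_{q_w \to q_w^{-1}} \;=\; (1-q_w^{t})^{-1} \;=\; -q_w^{-t}(1-q_w^{-t})^{-1}
\]
to every Euler factor, noting that the global substitution $q_v \to q_v^{-1}$ implements $q_w \to q_w^{-1}$ for each $w \mid v$. The signs multiply to $(-1)^N$ where
\[
N \;=\; \sum_{i=1}^e \sum_{j=1}^{\abs{\bm\lambda_i}} \noplaces_v(k_i) \;=\; \sum_{i=1}^e \abs{\bm\lambda_i}\dtimes \noplaces_v(k_i)
\]
is the total number of Euler factors, matching the sign in~\eqref{eq:feqn}.

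Next, the accumulated $q_v$-exponent is
\[
\sum_{i=1}^e \sum_{j=1}^{\abs{\bm\lambda_i}} \sum_{w\mid v} f(w|v)\bigl[(j-1) - \ind{\bm\lambda_i^*}{j}\dtimes s\bigr] \;=\; \sum_{i=1}^e \deg(f_i)\Bigg[\binom{\abs{\bm\lambda_i}}{2} \;-\; s\sum_{j=1}^{\abs{\bm\lambda_i}}\ind{\bm\lambda_i^*}{j}\Bigg],
\]
using $\sum_{w \mid v} f(w|v) = \deg(f_i)$ and $\sum_{j=1}^{\abs{\bm\lambda_i}}(j-1) = \binom{\abs{\bm\lambda_i}}{2}$. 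To rewrite the remaining sum, I would invoke the combinatorial identity
\[
\sum_{j=1}^{\abs{\bm\mu}}\ind{\bm\mu}{j} \;=\; \sum_{r=1}^{\len{\bm\mu}} r\dtimes \mu_r,
\]
valid for any partition $\bm\mu$ because exactly $\mu_r$ indices $j$ satisfy $\ind{\bm\mu}{j} = r$. Applying it to $\bm\mu = \bm\mu_i = \bm\lambda_i^*$, for which $\len{\bm\mu_i} = \lambda_{i,1}$, turns the $s$-coefficient into $\sum_{j=1}^{\lambda_{i,1}} j\dtimes \mu_{ij}$, completing the match with~\eqref{eq:feqn}.

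The expected main obstacle is mere bookkeeping: the careful identification of the finite set of exceptional $v$ that must be excluded (those ramified in some $k_i$, those where $A$ fails to be conjugate to its primary rational canonical form over $\fo_v$, and those where the local product formula acquires correction factors). No new conceptual input beyond Theorem~\ref{Thm:global}(\ref{Thm:global1}) appears to be required.
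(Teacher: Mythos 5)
Your proposal is correct and follows essentially the same route as the paper's own proof: the paper likewise starts from the local product formula (Theorem~\ref{thm:local}) and invokes the elementary observation that $\prod_{w\mid v}\zeta_{\fo'_w}(s)\big\vert_{q_v\to q_v^{-1}} = (-1)^{\noplaces_v(k')} q_v^{-\idx{k':k}s}\prod_{w\mid v}\zeta_{\fo'_w}(s)$ for $v$ unramified in $k'$, leaving the remaining sign and exponent bookkeeping (including the identity $\sum_j \ind{\bm\mu}{j} = \sum_r r\mu_r$) implicit where you spell it out explicitly.
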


The operation of inverting $q_v$ can be interpreted using \eqref{eq:local} or,
in far greater generality, in terms of suitable explicit formulae as
in \cite{Vol10}.
We note that in the special case that $(A - a 1_n)^n = 0$ for some $a \in k$,
the functional equation \eqref{eq:feqn} follows
from \cite[Thm\ 1.2]{Vol16} (see \cite[Rem.\ 1.5]{Vol16}).

\medskip
It is natural to ask what properties of $A$ can be inferred from its associated
zeta functions.
We will make frequent use of the following elementary observation.
\begin{lemma*}
  Let $A,B\in \Mat_n(k)$.
  Suppose that 
  $k[A]$ and $k[B]$ are similar (i.e.\ $\GL_n(k)$-conjugate).
  Then for almost all $v \in \Places_k$,
  $\zeta_{A,\fo_v}(s) = \zeta_{B,\fo_v}(s)$. \qed
\end{lemma*}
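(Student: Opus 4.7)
The plan is to exhibit, for almost all $v \in \Places_k$, an explicit index-preserving bijection between $A$-invariant $\fo_v$-submodules of $\fo_v^n$ and $B$-invariant $\fo_v$-submodules of $\fo_v^n$. The bijection will be conjugation by a fixed element $g \in \GL_n(k)$ witnessing the hypothesis.

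First, fix $g \in \GL_n(k)$ with $gk[A]g^{-1} = k[B]$. Since $gBg^{-1} \in k[A]$ and $g^{-1}Ag \in k[B]$, we may write $gBg^{-1} = p(A)$ and $g^{-1}Ag = q(B)$ for polynomials $p,q \in k[X]$. Now $g$, $g^{-1}$, and the coefficients of $p$ and $q$ together involve only finitely many denominators in $k$. Consequently, for all but finitely many $v \in \Places_k$, we have $g \in \GL_n(\fo_v)$ and $p, q \in \fo_v[X]$, whence $gBg^{-1} \in \fo_v[A]$ and $g^{-1}Ag \in \fo_v[B]$. For any such $v$ this yields $g\fo_v[B]g^{-1} \subseteq \fo_v[A]$ and $\fo_v[A] \subseteq g\fo_v[B]g^{-1}$, i.e.\ $\fo_v[A] = g\fo_v[B]g^{-1}$ as $\fo_v$-subalgebras of $\Mat_n(\fo_v)$.

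Next, note that a submodule $U \le \fo_v^n$ is $A$-invariant if and only if it is $\fo_v[A]$-invariant, and likewise for $B$. For $v$ as above and any $\fo_v$-submodule $U \le \fo_v^n$, the submodule $gU$ is again an $\fo_v$-submodule of $\fo_v^n$ (because $g \in \GL_n(\fo_v)$) of the same additive index as $U$, and $U$ is $\fo_v[B]$-invariant if and only if $gU$ is $g\fo_v[B]g^{-1}$-invariant, i.e.\ $\fo_v[A]$-invariant. Thus $U \mapsto gU$ provides the desired index-preserving bijection, and so $a_m(B,\fo_v) = a_m(A,\fo_v)$ for every $m \ge 1$, proving $\zeta_{A,\fo_v}(s) = \zeta_{B,\fo_v}(s)$.

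There is no real obstacle here: the only delicate point is ensuring that the single element $g$ and the finitely many coefficients of $p$ and $q$ become $v$-integral, which excludes only finitely many places. All the content of the statement is already encoded in this finite-denominator observation together with the evident transport of structure under conjugation.
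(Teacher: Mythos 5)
Your proof is correct and is exactly the transport-of-structure argument the paper leaves unwritten (the lemma carries a \verb|\qed| with no proof, being regarded as an elementary observation): spread out $g$, $g^{-1}$ and the coefficients of $p,q$ so that for almost all $v$ one has $g\in\GL_n(\fo_v)$ and $\fo_v[A]$, $\fo_v[B]$ conjugate, then conjugate lattices, which preserves the index. Two cosmetic points only: from $g\,k[A]\,g^{-1}=k[B]$ one gets $g^{-1}Bg\in k[A]$ and $gAg^{-1}\in k[B]$ rather than the containments as you wrote them (harmless, since the hypothesis is symmetric in $A$ and $B$ --- just replace $g$ by $g^{-1}$), and with the paper's convention of matrices acting on row vectors from the right the bijection is $U\mapsto Ug$ rather than $U\mapsto gU$.
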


The following is another consequence of our explicit formulae.
\begin{thmabc}
  \label{Thm:sim}
  Let $A \in \Mat_n(k)$ and $B \in \Mat_m(k)$ be nilpotent.
 The following are equivalent:
  \begin{enumerate}
  \item
    \label{Thm:sim1}
    $n = m$ and $A$ and $B$ are similar.
  \item
    \label{Thm:sim2}
    For almost all $v \in \Places_k$, $\zeta_{A,\fo_v}(s) =
    \zeta_{B,\fo_v}(s)$.
  \item
    \label{Thm:sim3}
    There exists a finite $S \subset \Places_k$ such that $A$ and $B$ both have
    entries in $\fo_S$ and such that $\zeta_{A,\fo_S}(s) = \zeta_{B,\fo_S}(s)$.
  \end{enumerate}
\end{thmabc}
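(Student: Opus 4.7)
The plan is to prove the chain (\ref{Thm:sim1})\,$\Rightarrow$\,(\ref{Thm:sim2})\,$\Rightarrow$\,(\ref{Thm:sim3})\,$\Rightarrow$\,(\ref{Thm:sim1}). The first implication is immediate from the Lemma preceding the theorem: if $B = gAg^{-1}$ with $g \in \GL_n(k)$, then $k[B] = g\, k[A]\, g^{-1}$. For (\ref{Thm:sim2})\,$\Rightarrow$\,(\ref{Thm:sim3}), I would let $S$ be the (finite) union of the places at which $A$ or $B$ fails to be integral and the places at which the local zeta functions disagree, and then invoke the Euler product of the Proposition preceding Theorem~\ref{thm:dSG}.

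For the substantive direction (\ref{Thm:sim3})\,$\Rightarrow$\,(\ref{Thm:sim1}), write $\bm\lambda$ and $\bm\mu$ for the unique partitions appearing in the elementary divisor vectors of $A$ and $B$ over $k$, each attached to the polynomial $f = X$ because $A, B$ are nilpotent. The Dirichlet coefficients $a_m(A, \fo_S)$ are multiplicative (apply the Chinese Remainder Theorem to finite-index submodules of $\fo_S^n$, noting that a submodule of index $m$ automatically contains $m \fo_S^n$ and that $A$-invariance is preserved by the CRT decomposition), so $\zeta_{A,\fo_S} = \zeta_{B,\fo_S}$ forces $L_p^A(s) := \sum_{k \ge 0} a_{p^k}(A, \fo_S)\, p^{-ks} = L_p^B(s)$ at every rational prime $p$. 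Now choose a prime $p$ that splits completely in $k$ (such $p$ form a set of positive density by Chebotarev) and lies outside the finite bad set for the local form of the explicit formula underlying Theorem~\ref{Thm:global}(\ref{Thm:global1}). For every place $v \mid p$ one then has $q_v = p$ and
\[
  \zeta_{A, \fo_v}(s) \;=\; \prod_{j=1}^{\abs{\bm\lambda}} \bigl(1 - p^{(j-1) - (\bm\lambda^*)^{-1}(j)\dtimes s}\bigr)^{-1},
\]
independently of the choice of $v \mid p$, so $L_p^A = \zeta_{A, \fo_v}^{[k:\QQ]}$ and similarly for $B$. Extracting $[k:\QQ]$-th roots in $\QQ[[p^{-s}]]$ (valid since both sides have constant term $1$) yields $\zeta_{A, \fo_v}(s) = \zeta_{B, \fo_v}(s)$.

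Setting $T = p^{-s}$, this reduces the problem to the polynomial identity
\[
  \prod_{j=1}^{\abs{\bm\lambda}} \bigl(1 - p^{j-1} T^{(\bm\lambda^*)^{-1}(j)}\bigr) \;=\; \prod_{j=1}^{\abs{\bm\mu}} \bigl(1 - p^{j-1} T^{(\bm\mu^*)^{-1}(j)}\bigr)
\]
in $\ZZ[T]$. Taking formal logarithms and equating coefficients of $T^N$ produces $C_N(p) = C_N'(p)$ for every $N \ge 1$, where
\[
  C_N(q) \;:=\; \sum\limits_{(j, m) : \, m\,(\bm\lambda^*)^{-1}(j) = N} q^{m(j-1)}/m,
\]
and $C_N'$ is the analogue attached to $\bm\mu$. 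A routine induction on $N$ then recovers $\bm\lambda^*$ part by part: $C_1(p) = 1 + p + \dotsb + p^{\len{\bm\lambda} - 1}$ exhibits $\len{\bm\lambda}$, and once the first $N-1$ parts of $\bm\lambda^*$ have been determined, subtracting the known contributions of the summands with $m \ge 2$ from $C_N(p)$ leaves the $m = 1$ term, which isolates the $N$-th part of $\bm\lambda^*$ (using $p > 1$). Hence $\bm\lambda^* = \bm\mu^*$, so $\bm\lambda = \bm\mu$; in particular $n = m$ and $A, B$ share a Jordan type, which suffices for $\GL_n(k)$-conjugacy.

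I expect the main technical delicacy to lie in the passage from $L_p^A = L_p^B$ to $\zeta_{A, \fo_v} = \zeta_{B, \fo_v}$, which hinges on both selecting a completely split prime $p$ and extracting a unique $[k:\QQ]$-th root in $\QQ[[p^{-s}]]$; the subsequent coefficient-by-coefficient recovery of $\bm\lambda$ from the sequence $(C_N)_{N \ge 1}$ is essentially a routine combinatorial exercise.
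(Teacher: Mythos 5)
Your proposal is correct, and its overall skeleton (reduce the global equality to a local one at completely split primes via Chebotarev, then read off the partition from the explicit local formula) matches the paper's; the difference lies in how each step is finished. The paper proves a reusable two-variable lemma: from $\zeta_{A,\fo_S}=\zeta_{B,\fo_S}$ and the Euler product it extracts, at infinitely many completely split primes, the identity $W(p,p^{-s})^d=W'(p,p^{-s})^d$ with $d=[k:\QQ]$, concludes $W_{\bm\lambda^*}(X,Y)=W_{\bm\mu^*}(X,Y)$ in $\QQ(X,Y)$ (killing the $d$-th root of unity ambiguity by non-negativity of the local coefficients), and then deduces $\bm\lambda=\bm\mu$ from the fact that the binomials $1-X^aY^b$ ($a\ge 0$, $b\ge 1$) generate a free abelian subgroup of $\QQ(X,Y)^\times$. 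You instead work at a single good split prime, extract the $d$-th root in $\QQ[[p^{-s}]]$ (legitimate, since $1+T\,\QQ[[T]]$ is torsion-free), and recover $\bm\lambda^*$ part by part from the logarithmic coefficients $C_N(p)$; your induction is sound, because the $m\ge 2$ summands of $C_N$ only involve parts of index at most $N/2$, so after cancelling them the $m=1$ geometric sums $p^{\us_{N-1}(\bm\lambda^*)}(1+p+\dotsb+p^{\lambda^*_N-1})$ determine $\lambda^*_N$ for $p>1$. What the paper's route buys is a clean, more general uniqueness statement for bivariate $W$'s (and the full two-variable identity $W_{\bm\lambda^*}=W_{\bm\mu^*}$, used elsewhere in spirit), at the price of invoking the multiplicative independence of the binomials; your route is more elementary and self-contained, needing only one completely split prime and no facts about $\QQ(X,Y)^\times$. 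Two small points to tidy: when choosing $p$ you should exclude not only the exceptional places of the explicit formula (for both $A$ and $B$) but also the primes lying below $S$, so that the $p$-part of the Euler product really is the full product over all $v\mid p$ and consists of exactly $[k:\QQ]$ identical factors; and the passage from $\zeta_{A,\fo_S}=\zeta_{B,\fo_S}$ to equality of the $p$-parts rests on the uniqueness of Dirichlet-series coefficients, which is worth stating (the paper does).
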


The nilpotency condition in Theorem~\ref{Thm:sim} cannot, in general, be
omitted, see Remark~\ref{rem:nilpotency_required}.

\medskip
The author previously conjectured \cite[\S 8.3]{topzeta} that generic local submodule zeta functions
associated with nilpotent matrix algebras have a simple pole at zero.
In the present case, our explicit formulae allow us to deduce the following.

\begin{thmabc}
  \label{Thm:poles}
  Let $A \in \Mat_n(k)$.
  Then for almost all $v \in \Places_k$, $\zeta_{A,\fo_v}(s)$ has a pole at zero.
  Moreover, the following are equivalent:
  \begin{enumerate}
  \item For almost all $v \in \Places_k$, $\zeta_{A,\fo_v}(s)$ has a \underline{simple} pole at
    zero.
  \item
    There exists $a \in k$ with $(A - a 1_n)^n = 0$.
  \end{enumerate}
\end{thmabc}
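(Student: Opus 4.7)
The plan is to read off both assertions from the local explicit formula~\eqref{eq:local}, which, for almost all $v \in \Places_k$, expresses $\zeta_{A,\fo_v}(s)$ as the product
\[
\prod_{i=1}^e \prod_{j=1}^{\abs{\bm\lambda_i}} \zeta_{\fo_i \otimes_{\fo} \fo_v}\bigl((\bm\lambda_i^*)^{-1}(j) \dtimes s - j + 1\bigr),
\]
where $\fo_i \otimes_\fo \fo_v = \prod_{w \mid v} \fo_{i,w}$, so that each factor is a product of local Euler terms $(1 - q_w^{-((\bm\lambda_i^*)^{-1}(j) s - j + 1)})^{-1}$.

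First, I would identify which factors carry a pole at $s = 0$. Since $(\bm\lambda_i^*)^{-1}(1) = 1$, the factor indexed by $(i,1)$ is $\zeta_{\fo_i \otimes_\fo \fo_v}(s) = \prod_{w \mid v}(1-q_w^{-s})^{-1}$, contributing a pole of order $\noplaces_v(k_i)$ at $s = 0$. For $j \ge 2$, the factor evaluates at $s = 0$ to the nonzero finite rational number $\prod_{w \mid v}(1 - q_w^{j-1})^{-1}$. Hence the order of the pole of $\zeta_{A,\fo_v}(s)$ at $s = 0$ equals exactly $\sum_{i=1}^e \noplaces_v(k_i)$, which is at least $e \ge 1$; this yields the first assertion.

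For the equivalence, this pole order equals $1$ for almost all $v$ if and only if $e = 1$ and $\noplaces_v(k_1) = 1$ for almost all $v$. The latter condition forces $k_1 = k$ by a standard Chebotarev-type argument: if $k_1/k$ were a non-trivial finite extension, then infinitely many places of $k$ would split completely in the Galois closure of $k_1/k$ and hence satisfy $\noplaces_v(k_1) = [k_1 : k] > 1$. Thus the simple-pole condition is equivalent to $e = 1$ together with $\deg(f_1) = 1$, i.e.\ $f_1 = X - a$ for some $a \in k$. This in turn is equivalent to the characteristic polynomial of $A$ being a power of $X - a$, hence to $(A - a 1_n)^n = 0$.

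The only substantive ingredient beyond the explicit formula~\eqref{eq:local} is the brief Chebotarev argument ruling out a proper extension $k_1/k$; the rest is direct bookkeeping of poles and non-vanishing values of local Dedekind zeta factors at non-positive integers.
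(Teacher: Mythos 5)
Your proof is correct and follows essentially the same route as the paper: both read off the pole order at $s=0$ from the local explicit formula \eqref{eq:local} (pole of order $\sum_i \noplaces_v(k_i)$, since only the $j=1$ factors contribute) and then use the fact that a non-trivial extension $k_1/k$ has infinitely many split places to force $f_1$ to be linear. The paper merely states this last step without spelling out the Chebotarev argument, which you supply explicitly.
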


\paragraph{Behaviour at zero in general---a conjecture.}
We use this opportunity to state a generalisation of our conjecture on
the behaviour at zero of local submodule zeta functions (see \cite[Conj.\ IV and
\S 8.3]{topzeta});
this generalisation disposes of the mysterious nilpotency assumption found in its precursor.

For a ring $R$ with polynomial submodule growth, a finitely generated
$R$-module $M$, and $\Omega \subset \End_{R}(M)$,
the submodule zeta function $\zeta_{\Omega \acts M}(s)$ is the Dirichlet
series enumerating $\Omega$-invariant $R$-submodules of finite index of $M$
(cf.\ \cite[Def.\ 2.1(ii)]{topzeta}).

Let $V$ be a finite-dimensional vector space over $k$ and let $\mathcal A\subset
\End_k(V)$ be an associative, unital subalgebra.
Let $\rad(\mathcal A)$ denote the (nil)radical of $\mathcal A$.
By the Wedderburn-Malcev Theorem \cite[Thm~72.19]{CR62},
there exists a subalgebra $\mathcal S
\subset \mathcal A$ such that $\mathcal A = \rad(\mathcal A) \oplus
\mathcal S$ as vector spaces (whence $\mathcal S \approx_k \mathcal
A/\rad(\mathcal A)$ is semisimple);
moreover, $\mathcal S$ is unique up to conjugacy under $(1 + \rad(\mathcal A))
\le \mathcal A^\times$.
Choose $\fo$-forms $\mathsf V \subset V$, $\mathsf A \subset \End_{\fo}(\mathsf
V)$ and $\mathsf S \subset \End_{\fo}(\mathsf V)$ of $V$, $\mathcal A$, and
$\mathcal S$, respectively.
We write $\mathsf X_v := \mathsf X \otimes_{\fo} \fo_v$ in the following.

\begin{conjabc}
  \label{conj:zero}
  For almost all $v \in \Places_k$,
  \[
  \frac{\zeta_{\mathsf A_v  \acts \mathsf V_v}(s)}
  {\zeta_{\mathsf S_v  \acts \mathsf V_v}(s)}
  \Biggm\vert_{s=0} = 1.
  \]
\end{conjabc}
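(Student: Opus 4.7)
The plan is to reduce to the local setting and then interpolate between $\mathsf S_v$ and $\mathsf A_v$ along the filtration of the nilradical. By the Euler product for submodule zeta functions, the global claim reduces to showing that, for almost all $v \in \Places_k$, the ratio of local zeta functions is regular at $s=0$ with value $1$. At almost all places the $\fo_v$-form $\mathsf A_v$ splits integrally as $\rad(\mathsf A_v) \oplus \mathsf S_v$ and $\mathsf V_v$ decomposes cleanly as an $\mathsf S_v$-module; I would freely impose such good-reduction hypotheses.

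Setting $\mathsf N := \rad(\mathsf A_v)$ with nilpotency index $r$, I would filter by the chain of subalgebras
\[
\mathsf A_v \;=\; \mathsf S_v + \mathsf N \;\supset\; \mathsf S_v + \mathsf N^2 \;\supset\; \dotsb \;\supset\; \mathsf S_v + \mathsf N^r \;=\; \mathsf S_v,
\]
and reduce multiplicatively to the assertion that, for each single step $\mathsf B \supset \mathsf B'$ where $\mathsf B = \mathsf B' \oplus \mathsf M$ with $\mathsf M$ a finitely generated $\mathsf S_v$-bimodule squaring to zero modulo $\mathsf B'$, the ratio $\zeta_{\mathsf B \acts \mathsf V_v}(s)/\zeta_{\mathsf B' \acts \mathsf V_v}(s)$ equals $1$ at $s=0$. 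The key observation is that a finite-index $\mathsf B'$-submodule $U \le \mathsf V_v$ is $\mathsf B$-invariant if and only if the induced $\mathsf S_v$-linear map $\mathsf M \otimes_{\fo_v} U \to \mathsf V_v/U$ vanishes; the ratio is therefore a Dirichlet series detecting failure of this vanishing.

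To control this ratio, I would express numerator and denominator as $p$-adic cone integrals in the spirit of du~Sautoy--Grunewald~\cite{dSG00}, parametrising sublattices by Hermite-style normal forms as in the proof of Theorem~\ref{Thm:global}, and read off the quotient as a single $p$-adic integral whose integrand is supported on the locus where the extra $\mathsf M$-invariance fails. My hope is that at $s=0$ the $s$-dependent weighting collapses, and what remains is a tautological identity of Haar measures saying that sublattices and $\mathsf M$-invariant sublattices are equally distributed in the Hermite chamber when one does not discriminate by index, yielding the factor $1$.

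The main obstacle, I expect, is the last step: in the explicit formula~\eqref{eq:global} the auxiliary factors $W_u(q_{w_u}^{-s})$ are genuinely present, and their values at $s=0$ are not transparent from any structural argument but arise instead from delicate cancellations in the underlying $p$-adic integrals. Running the layer-by-layer argument uniformly in $v$ will require intrinsic control of such correction factors, rather than the kind of case-by-case evaluation that suffices for Theorems~\ref{Thm:global}--\ref{Thm:poles}. If this control can be achieved---perhaps via a Denef-style resolution of the relevant ideal applied uniformly to the family of integrals indexed by the radical filtration---then Conjecture~\ref{conj:zero} should follow.
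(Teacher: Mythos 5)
The statement you are trying to prove is Conjecture~\ref{conj:zero}: the paper does not prove it, and offers no argument beyond the observation that the special case $\mathcal A = k[\alpha]$ (the enveloping algebra of a single endomorphism) follows by combining Solomon's formula (Theorem~\ref{thm:solomon}) with the explicit local formula of Theorem~\ref{thm:local}. So there is no ``paper proof'' to match your proposal against, and your proposal does not close the gap either: it is a reduction strategy plus a hope, and the hope is exactly where the whole difficulty lies. Concretely, the filtration $\mathsf S_v + \mathsf N^i$ and the telescoping of the ratio are legitimate, and your observation that a finite-index $\mathsf B'$-invariant lattice $U$ is $\mathsf B$-invariant precisely when the induced $\mathsf S_v$-linear map $\mathsf M \otimes_{\fo_v} U \to \mathsf V_v/U$ vanishes is correct; but each single-layer ratio being $1$ at $s=0$ is itself an instance of the conjecture (applied to $\mathsf S + \mathsf N^i$), so the reduction only reformulates the problem. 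The decisive step --- ``at $s=0$ the $s$-dependent weighting collapses and what remains is a tautological identity of Haar measures'' --- is asserted, not argued. Values of cone integrals at $s=0$ are produced by genuine cancellation among many terms (in a Denef-type formula, among contributions of many strata), not by any naive equidistribution of lattices versus $\mathsf M$-invariant lattices; indeed, even the nilpotent case of this behaviour (\cite[Conj.~IV]{topzeta}) is open, and the paper's own verification in the cyclic case goes through a complete explicit computation of the local zeta function, not through a measure-theoretic symmetry.

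Two smaller points. First, the exceptional factors $W_u(q_{w_u}^{-s})$ in \eqref{eq:global} are not the obstacle you should worry about: they occur at finitely many places, while the conjecture only concerns almost all $v$; at the relevant places the local factor is given by the clean formula \eqref{eq:local} when $\mathcal A = k[\alpha]$. The real obstruction is that for a general associative algebra $\mathcal A$ (not generated over its semisimple part by a single element) no analogue of \eqref{eq:local} is known, and the paper's entire machinery --- reduction to a primary, then nilpotent, single matrix and the recursive normal form $\Dual(\bm\lambda)$ --- is unavailable; the Ringel--Schmidmeier wildness phenomenon quoted in the introduction suggests that no such closed formula should be expected in general. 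Second, if you want a tractable first test of your layer-by-layer scheme, the case $\mathcal A = k[\alpha]$ already exhibits a nontrivial radical filtration, and there your single-layer claim can be checked against $W_{\bm\lambda}(q,q^{-s})$; doing so would at least tell you whether the per-layer ratios are individually $1$ at $s=0$ or whether cancellation only happens globally across layers --- a distinction your proposal currently glosses over.
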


This conjecture reduces to the behaviour predicted in \cite[\S 8.3]{topzeta} in
the ``nilpotent case'' $\mathcal A = \rad(\mathcal A) \oplus k 1_V$.
In order to make Conjecture~\ref{conj:zero} more explicit, we recall Solomon's 
formula for $\zeta_{\mathsf S_v \acts \mathsf V_v}(s)$.
Let $\mathcal S = \mathcal S_1 \oplus \dotsb \oplus \mathcal S_r$ be the Wedderburn
decomposition of the semisimple algebra $\mathcal S$ (so that each $\mathcal
S_i$ is simple).
Let $W_i$ be a simple $\mathcal S_i$-module and
decompose $V = V_1 \oplus \dotsb \oplus V_r$, where $V_i$ is isomorphic to
$W_i^{m_i}$ and $\mathcal S$ acts diagonally on $V$.
Let $k_i$ be the centre of $\mathcal S_i$ and let $\fo_i$ be the ring of
integers of $k_i$.
Finally, let $e_i$ be the Schur index of the central simple $k_i$-algebra
$\mathcal S_i$ and define $n_i$ by $\dim_{k_i}(\mathcal A_i)= n_i^2$.
\begin{thm}[{\cite[\S 4]{Sol77}}]
  \label{thm:solomon}
  For almost all $v \in \Places_k$,
  \begin{equation}
    \label{eq:solomon}
    \zeta_{\mathsf S_v \acts \mathsf V_v}(s)
    = \prod_{i=1}^r \prod_{j=1}^{m_i e_i}
    \prod_{\substack{w \in \Places_{k_i}\\\divides w v}}
    \zeta_{\fo_{i,w}}(n_is - j + 1).
  \end{equation}
\end{thm}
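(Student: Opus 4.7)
I would prove the formula by successively refining the action $\mathsf S_v \acts \mathsf V_v$ along three orthogonal decompositions: first into simple factors over $k$, then over the places $w \mid v$ of each centre $k_i$, then into split matrix factors; after which Morita equivalence reduces the enumeration to the classical submodule zeta function of a free $\fo_{i,w}$-module.

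The first reduction is driven by the central idempotents $\varepsilon_1,\dotsc,\varepsilon_r \in \mathcal S$ realising the Wedderburn decomposition $\mathcal S = \bigoplus_i \mathcal S_i$. Their denominators lie over finitely many rational primes, so $\varepsilon_i \in \mathsf S_v$ for almost all $v \in \Places_k$, yielding integral decompositions $\mathsf S_v = \bigoplus_i \varepsilon_i \mathsf S_v$ and $\mathsf V_v = \bigoplus_i \varepsilon_i \mathsf V_v$. Since the summands are orthogonal, a finite-index $\mathsf S_v$-invariant submodule is just a tuple of finite-index invariant submodules on each component, yielding $\zeta_{\mathsf S_v \acts \mathsf V_v}(s) = \prod_i \zeta_{\varepsilon_i \mathsf S_v \acts \varepsilon_i \mathsf V_v}(s)$. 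The second reduction uses the \'etale decomposition $k_i \otimes_k k_v \cong \prod_{w \mid v} k_{i,w}$, which for almost all $v$ descends to the chosen $\fo_v$-orders and splits the $i$-th factor into a product over $w \mid v$. Because a central simple algebra over a number field is unramified outside a finite set of places, $\mathcal S_i \otimes_{k_i} k_{i,w} \cong M_{n_i}(k_{i,w})$ for almost all $w$, and since any two maximal $\fo_{i,w}$-orders coincide, the place-$w$ component of the action is $M_{n_i}(\fo_{i,w})$ acting on a free module of rank $m_i e_i$ (the rank is fixed by $\dim_{k_i} V_i = m_i n_i e_i$, which forces $m_i e_i$ copies of the simple module $\fo_{i,w}^{n_i}$).

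Finally, Morita equivalence of $\fo_{i,w}$ and $M_{n_i}(\fo_{i,w})$ puts $M_{n_i}(\fo_{i,w})$-submodules of $\fo_{i,w}^{n_i m_i e_i}$ in bijection with $\fo_{i,w}$-submodules of $\fo_{i,w}^{m_i e_i}$, with additive indices raised to the power $n_i$. Applying the standard local formula $\sum_L [\fo_{i,w}^N : L]^{-s} = \prod_{j=1}^{N} \zeta_{\fo_{i,w}}(s - j + 1)$ at argument $n_i s$ and assembling the products over $i$ and $w \mid v$ produces \eqref{eq:solomon}. The main delicate task is not the algebraic manipulation but the bookkeeping needed to verify at each of the three stages that only finitely many places must be excluded: the denominators of the $\varepsilon_i$, the ramification locus of each central simple algebra $\mathcal S_i$ over $k_i$, and the set of places at which the fixed $\fo$-form fails to be a maximal order---each of these is well known to be finite, and the union of the three exceptional sets controls the ``almost all~$v$'' quantifier in \eqref{eq:solomon}.
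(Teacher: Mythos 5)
The paper itself offers no proof of Theorem~\ref{thm:solomon}: it is imported verbatim from \cite[\S 4]{Sol77}, so the only comparison available is with Solomon's original argument, and your proposal is essentially that argument specialised to the generic places allowed by the ``almost all $v$'' quantifier. Your three-stage reduction is correct and complete in outline: the central idempotents of $\mathcal S$ are integral at almost all $v$ and split off the simple factors; the decomposition $\fo_i \otimes_{\fo} \fo_v \cong \prod_{w \mid v} \fo_{i,w}$ (the same statement the paper proves as Lemma~\ref{lem:eqorder}) splits each factor over the places of the centre; and at almost all $w$ the algebra $\mathcal S_i$ splits and the completed order is maximal, after which Morita equivalence and the classical count of finite-index sublattices of a free module give exactly $\prod_{j=1}^{m_ie_i}\zeta_{\fo_{i,w}}(n_is-j+1)$, the rank $m_ie_i$ coming from $\dim_{k_i}W_i=n_ie_i$ and the exponent $n_i$ from the behaviour of additive indices under Morita. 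Two small touch-ups rather than gaps: maximal orders in $M_{n_i}(k_{i,w})$ are conjugate (each is the endomorphism ring of a lattice), not literally equal, but conjugation permutes invariant lattices preserving indices, so the count is unaffected; and since $\zeta_{\Omega \acts M}(s)$ depends only on the multiplicative closure of $\Omega$, you should pass from the $\fo$-form $\mathsf S$ to the order it generates before invoking maximality, which again excludes only finitely many places. The difference from Solomon is one of generality, not of method: his \S 4 handles arbitrary maximal orders, including the ramified places where a genuine division algebra survives and contributes shifted zeta factors of the local division algebra, whereas you may simply discard those finitely many places; so your proof is a correct specialisation of the cited argument rather than a new route.
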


The special case $\mathcal A = k[\alpha]$ ($\alpha \in \End_k(V)$) of
Conjecture~\ref{conj:zero} follows from Theorem~\ref{thm:solomon} and
Theorem~\ref{thm:local} below.

For a more abstract interpretation of Conjecture~\ref{conj:zero}, note that we
may identify $\mathcal S$ acting on $V$ with $\mathcal A/\rad(\mathcal A)$
acting (faithfully) on the semi-simplification of $V$ as an $\mathcal A$-module (i.e.\ the
direct sum of the composition factors of $V$ as an $\mathcal A$-module).

\paragraph{Overview.}
In order to derive Theorems~\ref{Thm:global}--\ref{Thm:poles}, we proceed as
follows.
In \S\ref{s:redprimary}, we reduce the computation of $\zeta_{A,\fo_S}(s)$ to
the case that the minimal polynomial of $A$ over $k$ is a power of an
irreducible polynomial.
In \S\ref{s:rednilpotent}, we then further reduce to the case that $A$ is
nilpotent.
The heart of this article, \S\ref{s:nilpotent}, is then devoted to the explicit
determination of $\zeta_{A,\fo_v}(s)$ for nilpotent $A$ and almost all $v \in \Places_k$;
as a by-product, in Theorem~\ref{thm:ZZX}, we compute the ideal zeta
function of the $2$-dimensional ring $\ZZ\llb X\rrb$.
We then combine our findings and derive
Theorems~\ref{Thm:global}--\ref{Thm:poles} in \S\ref{s:proofs}.
Finally, as an application, in \S\ref{s:app}, we use Theorem~\ref{Thm:global} to
compute the abscissae of convergence of some (largely unknown) submodule and
ideal zeta functions.

\subsection*{Acknowledgment}

I would like to thank Christopher Voll for interesting discussions.

\subsection*{\textit{Notation}}
Throughout, $\NN = \{ 1,2,\dotsc\}$ and $\delta_{ij}$ denotes the Kronecker symbol.
The symbol ``$\subset$'' indicates not necessarily proper inclusion.
We use $\approx_R$ to denote both the similarity of matrices over $R$ and the
existence of an $R$-isomorphism.
Matrices act by right-multiplication on row vectors.
Matrix sizes are indicated by single subscripts for square matrices and double
subscripts in general; in particular, $1_n$ and $0_{m,n}$ denote the $n\times n$
identity and $m\times n$ zero matrix, respectively.

We say that a property depending on $S$ holds for sufficiently large finite
$S \subset \Places_k$, if there exists a finite $S_0 \subset
\Places_k$ such that the property holds for all finite $S \subset \Places_k$
with $S \supset S_0$.
Given $v \in \Places_k$,
we write $\abs{\dtimes}_v$ for the $v$-adic absolute value on $k_v$ with
$\abs{\pi}_v = q_v^{-1}$ for $\pi \in \fp_v\setminus \fp_v^2$.

By a $p$-adic field, we mean a finite extension $K$ of the $p$-adic numbers
$\QQ_p$ for some prime $p$.
We let $\fO_K$ denote the valuation ring of $K$ and write $q_K$ for the
residue field size of $K$.
Furthermore, $\nu_K$ and $\abs{\dtimes}_K$ denote the additive valuation and
absolute value on $K$, respectively, normalised such that any uniformiser $\pi$
satisfies $\nu_K(\pi) = 1$ and $\abs{\pi}_K = q^{-1}_K$.
When the reference to $K$ is clear, we occasionally omit the
subscript ``$K$''.

%%%%%%%%%%%%%%%%%%%%%%%%%%%%%%%%%%%%%%%%%%%%%%%%%%%%%%%%%%%%%%%%%%%%%%%%
\section{Reduction to the case of a primary minimal polynomial}
\label{s:redprimary}
%%%%%%%%%%%%%%%%%%%%%%%%%%%%%%%%%%%%%%%%%%%%%%%%%%%%%%%%%%%%%%%%%%%%%%%%

By the following, up to enlarging $S$,
we may reduce the computation of $\zeta_{A,\fo_S}(s)$ to the case where the
minimal polynomial of $A$ over $k$ is primary (i.e.\ a power of an irreducible
polynomial).

\begin{prop}
  \label{prop:primary}
  Let $A \in \Mat_n(k)$.
  Let $f = f_1 \dotsb f_e$ be a factorisation of the minimal
  polynomial $f$ of $A$ over $k$ into a product of pairwise coprime monic polynomials
  $f_i \in k[X]$.
  Let $A_i \in \Mat_{n_i}(k)$ denote the matrix of $A$ acting on $\Ker(f_i(A))$
  with respect to an arbitrary $k$-basis.
  Then for almost all $v \in \Places_k$, $$\zeta_{A,\fo_v}(s) = \prod_{i=1}^e \zeta_{A_i,\fo_v}(s).$$
\end{prop}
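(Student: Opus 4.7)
The plan is to lift the primary decomposition of $k^n$ as a $k[A]$-module to an $A$-invariant direct-sum decomposition of the lattice $\fo_v^n$ for almost all $v$, and then to observe that both $A$-invariant $\fo_v$-submodules and their indices factor multiplicatively across the summands.

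Concretely, I would begin with the primitive orthogonal idempotents $\varepsilon_1,\dotsc,\varepsilon_e \in k[A] \subset \Mat_n(k)$ furnished by the Chinese Remainder isomorphism $k[X]/(f) \cong \prod_{i=1}^e k[X]/(f_i)$, so that $\varepsilon_i\varepsilon_j = \delta_{ij}\varepsilon_i$, $\sum_i \varepsilon_i = 1_n$, and $V_i := \varepsilon_i k^n = \Ker(f_i(A))$. After discarding the finitely many places at which any of $A,A_1,\dotsc,A_e,\varepsilon_1,\dotsc,\varepsilon_e$ fails to be integral, I set $W_i := \varepsilon_i \fo_v^n$ and obtain an $A$-invariant decomposition $\fo_v^n = W_1 \oplus \dotsb \oplus W_e$, in which each $W_i$ is a free $\fo_v$-module of rank $n_i$ and $W_i = V_i \cap \fo_v^n$.

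Since $\fo_v[A] \ni \varepsilon_i$, any $A$-invariant $\fo_v$-submodule $U \le \fo_v^n$ also satisfies $\varepsilon_i U \subset U$, hence $U = \bigoplus_i (U \cap W_i)$ with $U \cap W_i = \varepsilon_i U$ an $A$-invariant sublattice of $W_i$; the passage is reversible and $\idx{\fo_v^n : U} = \prod_i \idx{W_i : U\cap W_i}$. Summing over $U$ gives $\zeta_{A,\fo_v}(s) = \prod_i \zeta_{A|_{W_i},\fo_v}(s)$. To recognise each factor as $\zeta_{A_i,\fo_v}(s)$, I would note that the $k$-basis of $V_i$ defining $A_i$ lies in $V_i \cap \fo_v^n = W_i$ for almost all $v$, and the full-rank $\fo_v$-sublattice of $W_i$ it spans coincides with $W_i$ off a further finite set of places (its index in $W_i$ being a fixed nonzero rational number, hence a $v$-adic unit almost everywhere). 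In an $\fo_v$-basis of this form, $A|_{W_i}$ is represented precisely by the matrix $A_i$, and the identification follows.

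The main obstacle is this integrality step: the idempotents $\varepsilon_i$ and the relative positions of the chosen $k$-bases of $V_i$ versus the lattices $W_i$ are only a priori controlled over $k$. Arranging everything to reduce integrally at $v$ excludes finitely many places, most visibly those dividing the resultants $\mathrm{Res}(f_i,f_j)$ (which govern the $\varepsilon_i$) together with a handful of auxiliary determinants; this accounts for the ``almost all'' qualifier in the conclusion.
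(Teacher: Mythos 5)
Your argument is correct and is essentially the paper's proof: the paper likewise takes the Chinese-Remainder idempotents $g_i(A)\in k[A]$ (after first conjugating $A$ into block-diagonal form over $k$, which spares it your final lattice-comparison step), notes that they are defined over $\fo_v$ for almost all $v$, and decomposes each finite-index $A$-invariant lattice accordingly, so that indices multiply. One point of precision: the property you actually use is $\varepsilon_i\in\fo_v[A]$, which is what $v$-integrality of the interpolating polynomials $g_i$ (equivalently, in essence, your resultant condition) delivers; merely requiring the matrices $\varepsilon_1,\dotsc,\varepsilon_e$ to have entries in $\fo_v$, as in your first list of excluded places, would not by itself guarantee $\varepsilon_i U\subset U$ for every $A$-invariant lattice $U$ (e.g.\ $A=\diag(1,1+p)$ over $\ZZ_p$ has integral idempotents $\diag(1,0),\diag(0,1)$ which do not preserve the $A$-invariant lattice $\{(x,y):x\equiv y\bmod p\}$).
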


\begin{proof}
  It is well-known that $k^n = \Ker(f_1(A)) \oplus \dotsb \oplus \Ker(f_e(A))$
  is an $A$-invariant decomposition 
  into subspaces of dimensions $n_1,\dotsc,n_e$, say, 
  and $f_i$ is the minimal polynomial of $A_i$.
  We may thus assume that $A = \diag(A_1,\dotsc,A_e)$.
  By the Chinese remainder theorem, for each $i = 1,\dotsc,e$, there exists $g_i \in
  k[X]$ with $g_i \equiv \delta_{ij} \bmod {f_j}$ for $j = 1,\dotsc,e$.
  Hence, $g_i(A) = \diag( \delta_{i1} 1_{n_1},\dotsc,\delta_{ie}
  1_{n_e}) \in k[A]$.
  Choose a finite set $S\subset \Places_k$ with $A_i \in \Mat_{n_i}(\fo_S)$ and $g_i
  \in \fo_S[X]$ for $i = 1,\dotsc,e$.

  Let $v \in \Places_k \setminus S$.
  Write $V := \fo_v^n$. 
  The block diagonal shape of $A$ yields an $A$-invariant decomposition
  $V = V_1 \oplus \dotsb \oplus V_e$ into free $\fo_v$-modules of ranks $n_1,\dotsc,n_e$.
  Note that $A$ acts as~$A_i$ on each $V_i$
  and that each $g_i(A)$ acts as the natural map $V \onto V_i \incl V$.
  Let $U \le V$ be an $\fo_v$-submodule.
  If $U$ is $A$-invariant, then it decomposes as $U = U_1 \oplus \dotsb \oplus
  U_e$ for $A_i$-invariant submodules $U_i \le V_i$. 
  We conclude that $(U_1,\dotsc,U_e) \mapsto U_1 \oplus \dotsb \oplus U_e$
  defines a bijection from
  \[
  \Bigl\{ (U_1,\dotsc,U_e) : U_i \le_{\fo_v} V_i \text{ and } U_i A_i \le U_i \text{ for } i = 1,\dotsc, e
  \Bigr\}
  \]
  onto the set of $A$-invariant submodules of $V$ whence 
  $\zeta_{A,\fo_v}(s) = \zeta_{A_1,\fo_v}(s) \dotsb \zeta_{A_e,\fo_v}(s)$.
\end{proof}

%%%%%%%%%%%%%%%%%%%%%%%%%%%%%%%%%%%%%%%%%%%%%%%%%%%%%%%%%%%%%%%%%%%%%%%%
\section{Reduction to the case of a nilpotent matrix}
\label{s:rednilpotent}
%%%%%%%%%%%%%%%%%%%%%%%%%%%%%%%%%%%%%%%%%%%%%%%%%%%%%%%%%%%%%%%%%%%%%%%%

Recall that $\CM(f)$ denotes the companion matrix of a polynomial $f$.
Given a partition $\bm\lambda = (\lambda_1,\dotsc,\lambda_r)$, let
$$\Nil(\bm\lambda) := \diag(\CM(X^{\lambda_1}),\dotsc,\CM(X^{\lambda_r})).$$

Suppose that the minimal polynomial of $A \in \Mat_n(k)$ is a power of an
irreducible polynomial $f$;
we then say that $A$ is \emph{($f$-)primary}.
The elementary divisors of $A$ are $f^{\lambda_1},\dotsc,f^{\lambda_r}$ 
for a unique partition $\bm\lambda = (\lambda_1,\dotsc,\lambda_r)$  of $n / \deg(f)$.
We call $\bm\lambda$ the \emph{type} of~$A$.

For an extension $k'/k$ of number fields and $S\subset \Places_k$, define
\[
\Divisors_{k'/k}(S) = \{
w \in \Places_{k'} : \exists v \in S. \divides w v
\}.
\]
Hence, using the notation from Theorem~\ref{Thm:FEqn}, $\#\Divisors_{k'/k}(S) = \sum\limits_{v \in S} \noplaces_v(k')$.

In this section, we prove the following.

\begin{thm}
  \label{thm:rednil}
  Let $f \in k[X]$ be monic and irreducible.
  Let $A \in \Mat_n(k)$ be an $f$-primary matrix of type $\bm\lambda$.
  Let $k' = k[X]/(f)$, and let $\fo'$ be the ring of integers of $k'$.
  Then for almost all $v \in \Places_k$,
  \[
  \zeta_{A,\fo_v}(s) = \prod_{\substack{w \in \Places_{k'} \\\divides w v}} \zeta_{\Nil(\bm\lambda),\fo'_w}(s).
  \]
  Hence, for all sufficiently large finite $S \subset \Places_k$,
  setting $S' = \Divisors_{k'/k}(S)$.
  \[
  \zeta_{A,\fo_S}(s) = \zeta_{\Nil(\bm\lambda),\fo'_{S'}}(s).
  \]
\end{thm}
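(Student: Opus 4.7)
The plan is to work locally and, for almost all $v \in \Places_k$, match $A$-invariant $\fo_v$-submodules of $\fo_v^n$ with $\Nil(\bm\lambda)$-invariant $\fo'_w$-submodules of free $\fo'_w$-modules, distributed over the primes $w \mid v$ of $k'$. First I would fix a finite exceptional set so that for every remaining $v$ the matrix $A$ lies in $\Mat_n(\fo_v)$ with minimal polynomial still $f^{\lambda_1}$, the polynomial $f$ is separable modulo $\fp_v$ and factors as $f = \prod_{w\mid v} f_w$ in $\fo_v[X]$ into distinct monic irreducibles matching the primes of $k'$ above $v$, each extension $k'_w / k_v$ is unramified, and $\fo'_w = \fo_v[X]/(f_w)$ is $\fo_v$-free of rank $d_w := \deg f_w$.

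The first structural step is a local primary decomposition. The $f_w^{\lambda_1}$ are pairwise coprime monic factors of $f^{\lambda_1}$ in $\fo_v[X]$, so Chinese remaindering yields $g_w \in \fo_v[X]$ with $g_w \equiv \delta_{ww'} \pmod{f_{w'}^{\lambda_1}}$, and the argument of Proposition~\ref{prop:primary} now applies verbatim. One obtains an $A$-invariant decomposition $\fo_v^n = \bigoplus_{w\mid v} V_w$ with $V_w := \Ker(f_w^{\lambda_1}(A))$ realised as the image of the integral idempotent $g_w(A)$, and
\[
\zeta_{A,\fo_v}(s) = \prod_{w \mid v} \zeta_{A_w,\fo_v}(s),
\]
where $A_w := A|_{V_w}$ is $f_w$-primary of type $\bm\lambda$ on the free $\fo_v$-module $V_w$ of rank $d_w\abs{\bm\lambda}$.

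Next I would endow $V_w$ with an $\fo'_w$-structure. In the local Artinian ring $\fo_v[A_w] = \fo_v[X]/(f_w^{\lambda_1})$ the ideal $(f_w(A_w))$ is nilpotent and $f_w'(A_w)$ is a unit (by separability of $\bar f_w$), so Hensel's lemma produces $\beta \in \fo_v[A_w]$ with $f_w(\beta) = 0$, hence an $\fo_v$-algebra embedding $\fo'_w \hookrightarrow \fo_v[A_w]$. A Taylor expansion of $f_w$ around $\beta$ gives $f_w(A_w) = (A_w - \beta)\, u$ with $u$ a unit, so $N := A_w - \beta$ satisfies $N^{\lambda_1} = 0$, and a comparison of $\fo_v$-ranks forces $\fo_v[A_w] \cong \fo'_w[Y]/(Y^{\lambda_1})$ via $Y \mapsto N$. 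The module $V_w$ is then finitely generated and torsion-free over the DVR $\fo'_w$, hence $\fo'_w$-free of rank $\abs{\bm\lambda}$; its $\fo'_w[Y]/(Y^{\lambda_1})$-module type is determined by the generic fibre $V_w \otimes_{\fo_v} k_v$, which has type $\bm\lambda$ by hypothesis, so in a suitable $\fo'_w$-basis the operator $N$ is represented by $\Nil(\bm\lambda)$.

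Finally, an $A_w$-invariant $\fo_v$-submodule of $V_w$ is automatically $\fo_v[A_w]$-stable, hence $\fo'_w$-stable and $N$-invariant; the converse is immediate since $A_w = \beta + N$ with $\beta \in \fo'_w$. The additive index is intrinsic to $V_w$, so we conclude $\zeta_{A_w,\fo_v}(s) = \zeta_{\Nil(\bm\lambda),\fo'_w}(s)$, and combining with the primary decomposition proves the local identity. The global identity then follows by taking the Euler product and enlarging $S$ to absorb the finitely many exceptional places, using $\{w \in \Places_{k'} : w \mid v,\ v \notin S\} = \Places_{k'} \setminus S'$. The main technical obstacle is the integral Hensel step: producing $\beta$ inside $\fo_v[A_w]$, and not merely in $k_v[A_w]$; this hinges on $f_w'(A_w)$ being a unit (separable reduction of $f_w$) and the nilpotence of the maximal ideal of $\fo_v[A_w]$, both of which hold outside a finite set of places.
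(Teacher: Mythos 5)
Your overall strategy---decompose $\fo_v^n$ locally by CRT into $f_w$-primary blocks $V_w$, then split $A_w=\beta+N$ integrally via Hensel's lemma so that $\fo_v[A_w]\cong\fo'_w[Y]/(Y^{\lambda_1})$ and compare invariant lattices---is a genuinely different route from the paper's (which conjugates $A$ over $k$ into the separable Jordan form $1_m\otimes\CM(f)+\Nil(\bm\lambda)\otimes 1_d$, uses $1_m\otimes\CM(f)\in k[A]$, and identifies $\fo_v[X]/(f)$ with $\prod_{w\mid v}\fo'_w$), and most of its steps (the integral Hensel lift, the rank count giving $\fo_v[A_w]\cong\fo'_w[Y]/(Y^{\lambda_1})$, the equivalence of $A_w$-invariance and $N$-invariance of $\fo'_w$-lattices) are sound. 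However, there is a genuine gap at the decisive point: you assert that the $\fo'_w[Y]/(Y^{\lambda_1})$-module type of $V_w$ ``is determined by the generic fibre'', so that $N$ is represented by $\Nil(\bm\lambda)$ in a suitable $\fo'_w$-basis. That principle is false: an $\fo'_w$-lattice with nilpotent operator is not determined by its generic fibre. For instance, over $\ZZ_p$ the operator $\bigl[\begin{smallmatrix}0&p\\0&0\end{smallmatrix}\bigr]$ has generic type $(2)$ but is not $\ZZ_p$-similar to $\Nil((2))$, and the two invariant-lattice zeta functions genuinely differ---this is exactly the paper's Example~\ref{ex:exceptional}. Note also that none of the good-reduction conditions you impose at the outset ($A$ integral with minimal polynomial $f^{\lambda_1}$, $f$ separable mod $\fp_v$, unramifiedness, $\fo'_w=\fo_v[X]/(f_w)$) excludes this degeneration: for $A=\bigl[\begin{smallmatrix}0&a\\0&0\end{smallmatrix}\bigr]$ and a place with $v(a)>0$ all of them hold, yet the asserted local identity fails there.

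What is missing is precisely the argument pinning down the \emph{integral} similarity class of $N$ for almost all $v$: since $A$ is fixed globally, one should invoke a similarity over $k$ (e.g.\ the separable Jordan normal form of Proposition~\ref{prop:jnf}) and observe that the conjugating matrix and its inverse have entries in $\fo_v$ for almost all $v$, so that at those places $V_w$ with the operator $N$ really is the standard lattice of type $\bm\lambda$; equivalently, one must enlarge the exceptional set to contain the finitely many places where the reduction type of $A$ degenerates, and justify that only finitely many such places exist. Once this is added your CRT/Hensel argument closes, but after conjugating into the normal form the Hensel step becomes largely redundant and the proof collapses to the paper's: there, $1_m\otimes\CM(f)\in k[A]$ (Lemma~\ref{lem:diagin}), the identification $\fo_v[X]/(f)\approx_{\fo_v}\prod_{w\mid v}\fo'_w$ (Lemma~\ref{lem:eqorder}), and the product decomposition of Proposition~\ref{prop:ringprod} do the work that your local primary decomposition and Hensel lift are designed to do.
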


\begin{rem}
  In \cite[\S 3]{Sha82}, the study of the variety of subspaces invariant under an
  endomorphism of a finite-dimensional real or complex vector space is
  reduced to the case of a nilpotent endomorphism.
  Shayman proceeds by first reducing to the case of a primary
  endomorphism (\cite[Thm~2]{Sha82}) and our Proposition~\ref{prop:primary}
  proceeded along the same lines.
  In his setting, the minimal polynomial of a primary endomorphism is a power of
  a linear or quadratic irreducible and he considers these cases separately.
  His reasoning is similar to arguments employed in our proof of
  Theorem~\ref{thm:rednil} below.
  We may regard the factorisation of $\zeta_{A,\fo_v}(s)$ obtained by
  combining Proposition~\ref{prop:primary} and Theorem~\ref{thm:rednil} 
  as an arithmetic analogue of the factorisation of the space of $A$-invariant
  subspaces in \cite[Thm~3]{Sha82}.
  In \cite[\S 4]{Sha82}, Shayman then proceeds to study invariant subspaces of
  nilpotent matrices in Jordan normal form.
  For our purposes, a slightly different normal form, introduced in
  \S\ref{ss:nf}, will prove advantageous.
\end{rem}

Our proof of Theorem~\ref{thm:rednil} requires some preparation.

%%%%%%%%%%%%%%%%%%%%%%%%
\subsection{A generalised Jordan normal form for primary matrices}
%%%%%%%%%%%%%%%%%%%%%%%%

Let $\otimes$ denote the usual Kronecker product $[a_{ij}]\otimes B = [a_{ij}B]$
of matrices.
The following result is a special case of the ``separable Jordan
normal form'' in \cite[\S 6.2]{Nor12};
it can also be obtained by restriction of scalars from the usual Jordan normal
form of an $f$-primary matrix over a minimal splitting field of $f$ over $k$.

\begin{prop}
  \label{prop:jnf}
  Let $f \in k[X]$ be monic and irreducible of degree $d$.
  Let $A \in \Mat_n(k)$ be $f$-primary of type $\bm\lambda$. 
  Write $m := n/d$.
  Then $A\approx_k 1_m \otimes \CM(f) + \Nil(\bm\lambda) \otimes 1_d$.  
\end{prop}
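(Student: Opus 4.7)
My plan is to verify the claim by computing the minimal polynomial of $M := 1_m \otimes \CM(f) + \Nil(\bm\lambda) \otimes 1_d$ and comparing it with the primary rational canonical form of $A$. Since $A$ is $f$-primary of type $\bm\lambda$, it is $k$-similar to $\diag(\CM(f^{\lambda_1}), \dotsc, \CM(f^{\lambda_r}))$, while $M$ is visibly block diagonal with blocks
\[
M_i \;:=\; 1_{\lambda_i} \otimes \CM(f) + \CM(X^{\lambda_i}) \otimes 1_d
\]
of size $d\lambda_i$. It therefore suffices to show that each $M_i$ is $k$-similar to $\CM(f^{\lambda_i})$. Since $M_i$ has the correct size $d\lambda_i = \deg(f^{\lambda_i})$, this reduces to proving that the minimal polynomial of $M_i$ equals $f^{\lambda_i}$: the associated $k[X]$-module $k^{d\lambda_i}$ is then cyclic with annihilator $(f^{\lambda_i})$, hence isomorphic to $k[X]/(f^{\lambda_i})$.

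Fix $i$ and write $\lambda = \lambda_i$, $\xi := 1_\lambda \otimes \CM(f)$, and $\eta := \CM(X^\lambda) \otimes 1_d$, so that $M_i = \xi + \eta$. The plan is to exploit three facts about the commuting subalgebra $k[\xi,\eta]$: $[\xi,\eta] = 0$, $f(\xi) = 1_\lambda \otimes f(\CM(f)) = 0$, and $\eta^\lambda = \CM(X^\lambda)^\lambda \otimes 1_d = 0$. Because $\xi$ and $\eta$ commute, a formal Taylor expansion around $\xi$ yields
\[
f(M_i) \;=\; \sum_{j \ge 1} \tfrac{1}{j!}\, f^{(j)}(\xi)\, \eta^j \;=\; \eta \cdot g(\xi,\eta), \qquad g(\xi,\eta) \;:=\; f'(\xi) + \sum_{j \ge 2} \tfrac{1}{j!}\, f^{(j)}(\xi)\, \eta^{j-1}.
\]
Since $\eta$ commutes with $g(\xi,\eta)$, one has $f(M_i)^k = \eta^k\, g(\xi,\eta)^k$ for every $k \ge 1$; in particular $f(M_i)^\lambda = 0$. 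As $f$ is irreducible, the minimal polynomial of $M_i$ is then of the form $f^\mu$ for some $\mu \le \lambda$.

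The main obstacle, and the only non-formal input of the argument, is to show that $g(\xi,\eta)$ is invertible in $\Mat_{d\lambda}(k)$: this will force $f(M_i)^{\lambda-1} = \eta^{\lambda-1} g(\xi,\eta)^{\lambda-1} \ne 0$, since $\eta^{\lambda-1} \ne 0$ (the nilpotency index of $\CM(X^\lambda)$ is exactly $\lambda$), and hence $\mu = \lambda$. Reducing modulo the nilpotent ideal of $k[\xi,\eta]$ generated by $\eta$, one sees $g(\xi,\eta) \equiv f'(\xi) = 1_\lambda \otimes f'(\CM(f))$, so invertibility of $g(\xi,\eta)$ amounts to invertibility of $f'(\CM(f))$ in the field $k[\CM(f)] \cong k[X]/(f)$. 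This is equivalent to $f'(\CM(f)) \ne 0$, i.e.\ to the separability of $f$ over $k$, which is automatic since $k$ is of characteristic zero. Separability is therefore the key input; no further obstacles arise, and the same argument would work verbatim over any perfect base field.
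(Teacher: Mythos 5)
Your proof is correct, and it takes a genuinely different route from the paper. The paper does not argue from scratch: it invokes Norman's ``separable Jordan normal form'' and alternatively sketches a restriction-of-scalars argument, i.e.\ pass to a splitting field of $f$, put $A$ in usual Jordan form there, and use the fact that $k$-matrices similar over an extension field are already similar over $k$. You instead give a self-contained, elementary verification: reduce to the blocks $M_i = 1_{\lambda_i}\otimes \CM(f) + \CM(X^{\lambda_i})\otimes 1_d$, show via the Taylor expansion in the commutative algebra $k[\xi,\eta]$ that $f(M_i) = \eta\, g$ with $g$ a unit (unit modulo the nilpotent ideal $(\eta)$ because $f'(\CM(f))\neq 0$ in the field $k[X]/(f)$), conclude that the minimal polynomial of $M_i$ is exactly $f^{\lambda_i}$, and then use that a matrix whose minimal polynomial has degree equal to its size is non-derogatory, hence similar to the companion matrix $\CM(f^{\lambda_i})$. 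All steps check out: the block-compatibility of $1_m\otimes\CM(f)$ and $\Nil(\bm\lambda)\otimes 1_d$, the identity $f(M_i)^j=\eta^j g^j$ from commutativity, and $\eta^{\lambda_i-1}\neq 0$ are all fine, and separability is automatic over the number field $k$. What your approach buys is independence from the external reference and from any field extension or descent step; what the paper's route buys is brevity and the structural picture over the splitting field. One small caveat: your closing remark that the argument works ``verbatim'' over any perfect field is slightly too strong, since in characteristic $p$ the coefficients $1/j!$ in the Taylor expansion fail for $j\ge p$ and one would have to substitute Hasse derivatives; this does not affect the statement at hand.
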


\begin{lemma}
  \label{lem:diagin}
  Let $f \in k[X]$ be monic and irreducible of degree $d$, $\bm\lambda \parts m > 0$,
  and $A = 1_m \otimes \CM(f) + \Nil(\bm\lambda) \otimes 1_d$.
  Then $1_m \otimes \CM(f) = \diag(\CM(f),\dotsc,\CM(f)) \in k[A]$.
\end{lemma}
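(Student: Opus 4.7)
The strategy is to recognise the decomposition $A = B + M$, with $B := 1_m \otimes \CM(f)$ and $M := \Nil(\bm\lambda) \otimes 1_d$, as the Jordan--Chevalley (Dunford) decomposition of $A$ over the perfect field $k$. The classical Jordan--Chevalley theorem guarantees that the semisimple and nilpotent parts of a matrix over a perfect field both belong to the polynomial algebra generated by the matrix, which will deliver $B \in k[A]$; the identification $1_m \otimes \CM(f) = \diag(\CM(f), \ldots, \CM(f))$ is then immediate from the definition of the Kronecker product.

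First I would verify the three defining properties of a Jordan--Chevalley decomposition. Writing $N := \Nil(\bm\lambda)$ and using the Kronecker identity $(P \otimes Q)(R \otimes S) = PR \otimes QS$, both $BM$ and $MB$ evaluate to $N \otimes \CM(f)$, so $B$ and $M$ commute. Since $f(\CM(f)) = 0$ holds for the companion matrix, $f(B) = 1_m \otimes f(\CM(f)) = 0$; as $f$ is irreducible over a field of characteristic zero, it is separable, and therefore $B$ is semisimple. Finally, $M^{\lambda_1} = N^{\lambda_1} \otimes 1_d = 0$, since each diagonal block $\CM(X^{\lambda_i})$ of $N$ satisfies $\CM(X^{\lambda_i})^{\lambda_1} = 0$ (using $\lambda_i \le \lambda_1$), so $M$ is nilpotent.

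The substantive input is thus the classical existence and uniqueness of the Jordan--Chevalley decomposition over a perfect field, together with the fact that the semisimple and nilpotent components belong to $k[A]$. Should a self-contained justification be desired, one can observe that the minimal polynomial of $A$ equals $f^{\lambda_1}$ (read off from the Jordan structure over an algebraic closure: each eigenspace of $B$ is $M$-stable with $M$ restricting to something similar to $N$, whose largest Jordan block has size $\lambda_1$). Consequently $k[A] \cong k[X]/(f^{\lambda_1})$ is a local ring with residue field $k[X]/(f)$ and nilpotent maximal ideal generated by $f(A)$; separability of $f$ makes $f'(A)$ a unit modulo this ideal, and Hensel's lemma--realised by the Newton iteration $X_0 := A$, $X_{i+1} := X_i - f'(X_i)^{-1} f(X_i)$, which terminates after finitely many steps because the maximal ideal is nilpotent--produces the unique root of $f$ in $k[A]$ congruent to $A$ modulo the radical, and this root must coincide with $B$.
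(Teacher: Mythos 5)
Your proof is correct, but it takes a genuinely different route from the paper. You identify $A = B + M$ (with $B = 1_m \otimes \CM(f)$, $M = \Nil(\bm\lambda)\otimes 1_d$) as the Jordan--Chevalley decomposition: the commutation check via the Kronecker identity, the semisimplicity of $B$ from $f(B)=0$ with $f$ separable, and the nilpotency of $M$ are all verified correctly, and then the classical theorem (semisimple part is a polynomial in $A$, plus uniqueness) yields $B \in k[A]$; your Hensel/Newton appendix is essentially a proof of that theorem in this special case, with uniqueness again needed at the end to identify the constructed root with $B$. The paper instead argues by a dimension count: regarding $A$ as an $m\times m$ matrix over the field $k' = k[\CM(f)]$, it identifies $k[A, 1_m\otimes\CM(f)] = k'[\Nil(\bm\lambda)]$, of $k$-dimension $de$ where $e=\lambda_1$, and since the minimal polynomial of $A$ over $k$ is $f^{e}$, the subalgebra $k[A]$ already has dimension $de$, forcing $k[A] = k[A,1_m\otimes\CM(f)]$. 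Your main argument has the advantage of not needing the minimal polynomial of $A$ at all (and works verbatim over any perfect field), at the cost of invoking the Jordan--Chevalley theorem; the paper's argument is a short self-contained computation but does rely on knowing that the minimal polynomial of $A$ is $f^{\lambda_1}$, the same fact you establish via the Jordan structure over an algebraic closure in your self-contained variant.
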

\begin{proof}
  Write $\gamma := \CM(f)$ and $e := \lambda_1$;
  note that $X^e$ is the minimal polynomial of $\Nil(\bm\lambda)$ over every field.
  We may naturally regard $A$ as an $m\times m$ matrix over the field $k':=
  k[\gamma]$. Moreover, we may identify $k' = k[1_m \otimes \CM(f)]$ as $k$-algebras.
  Thus, $k[A, 1_m \otimes \CM(f)] = k'[\gamma 1_m + \Nil(\bm\lambda)] =
  k'[\Nil(\bm\lambda)]$
  whence the $k$-dimension of $k[A, 1_m \otimes \CM(f)]$ is $\idx{k':k} e = de$.
  As $f^e$ is the minimal polynomial of $A$ over $k$, 
  the number $de$ is also the $k$-dimension of $k[A]$ whence the claim follows.
\end{proof}

Regarding the transition from the number field $k$ to the local ring $\fo_v$,
we note that the enveloping algebras of companion matrices take the expected
forms over \UFD{}s.
\begin{lemma}
  \label{lem:evalCf}
  Let $R$ be a \UFD{} and let $f \in R[X]$ be monic.
  Then evaluation at $\CM(f)$ induces an isomorphism $R[X]/(f) \approx_R R[\CM(f)]$.
\end{lemma}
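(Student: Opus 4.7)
The plan is to consider the natural $R$-algebra homomorphism
$\varphi \colon R[X] \to R[\CM(f)]$, $g \mapsto g(\CM(f))$, and show it is surjective with kernel exactly $(f)$. Surjectivity is immediate from the definition of $R[\CM(f)]$. To check that $f \in \ker(\varphi)$, one observes that $f$ is the characteristic polynomial of $\CM(f)$ and invokes Cayley--Hamilton; alternatively, one computes $f(\CM(f))$ directly using the description of right multiplication by $\CM(f)$ on row vectors given below.

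For the reverse inclusion $\ker(\varphi) \subset (f)$, the key point is that $f$ is monic, so every $g \in R[X]$ can be written as $g = qf + r$ with $q, r \in R[X]$ and $\deg(r) < \deg(f)$. Since $f(\CM(f)) = 0$, membership in $\ker(\varphi)$ is equivalent to $r(\CM(f)) = 0$, so it suffices to show that no non-zero polynomial of degree less than $m := \deg(f)$ annihilates $\CM(f)$. Here I would use that right multiplication by $\CM(f)$ cyclically shifts the standard basis of $R^m$: writing $e_1,\dotsc,e_m$ for this basis, one has $e_1 \dtimes \CM(f)^i = e_{i+1}$ for $0 \le i \le m-1$. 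Thus if $r = c_0 + c_1 X + \dotsb + c_{m-1}X^{m-1}$ satisfies $r(\CM(f)) = 0$, then evaluating $e_1 \dtimes r(\CM(f)) = c_0 e_1 + \dotsb + c_{m-1} e_m = 0$ forces $c_0 = \dotsb = c_{m-1} = 0$, since $e_1,\dotsc,e_m$ are $R$-linearly independent in $R^m$.

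There is no serious obstacle here; in fact, the hypothesis that $R$ be a \UFD{} is not needed for this particular lemma (only that $f$ be monic, so that polynomial division in $R[X]$ is available). The \UFD{} assumption is presumably stated for uniformity with how the lemma will be applied later.
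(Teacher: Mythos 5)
Your proof is correct, and it takes a genuinely different route from the paper. The paper computes the kernel of the evaluation map by passing to the field of fractions $K$ of $R$: there the kernel is $R[X] \cap fK[X]$ (since $f$ is the minimal polynomial of $\CM(f)$ over $K$), and a Gauss-lemma type factorisation result for \UFD{}s (the cited theorem of Cohn) together with the primitivity of the monic $f$ is used to conclude that $R[X] \cap fK[X] = fR[X]$; this is exactly where the \UFD{} hypothesis enters. You instead argue entirely over $R$: division with remainder by the monic $f$, the identity $f(\CM(f)) = 0$, and the observation that $e_1, e_1\CM(f), \dotsc, e_1\CM(f)^{m-1}$ are the standard basis vectors of $R^m$ (so no non-zero polynomial of degree less than $m$ can annihilate $\CM(f)$) show directly that the kernel is $fR[X]$. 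Your remark that the \UFD{} assumption is superfluous is accurate: your argument proves the lemma over an arbitrary commutative ring with $f$ monic, which is strictly more general than the paper's statement, and is also more elementary in that it avoids the fraction field and the appeal to unique factorisation. What the paper's argument buys is brevity given the cited reference, and it fits the context where $R$ is always $\fo_v$ or $\fo_S$; but your version is self-contained and loses nothing.
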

\begin{proof}
  Let $K$ denote the field of fractions of $R$.
  The kernel of the natural map $R[X] \to R[\CM(f)]$ is $I := R[X] \cap f
  K[X]$ and, clearly, $f R[X] \subset I$.
  Let $h \in I$ so that $h = fg$ for some $g \in K[X]$.
  By \cite[Thm~7.7.2]{Coh03}, there exists $a \in K^\times$ with
  $af,a^{-1}g \in R[X]$.
  As $f$ is monic (hence primitive), $a \in A$ whence
  $g = a(a^{-1}g) \in R[X]$
  and $h \in fR[X]$.
\end{proof}

%%%%%%%%%%%%%%%%%%%%%%%%
\subsection{Properties of $S$-integers and their completions}
\label{ss:S}
%%%%%%%%%%%%%%%%%%%%%%%%

\begin{lemma}
  \label{lem:o'}
  Let $k'/k$ be an extension of number fields.
  Let $\fo'$ be the ring of integers of~$k'$.
  Let $S \subset \Places_k$ be finite and $S' = \Divisors_{k'/k}(S)$.
  Then $\fo' \otimes_{\fo} \fo_S \approx_{\fo} \fo'_{S'}$.
\end{lemma}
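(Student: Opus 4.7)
The plan is to realize $\fo_S$ as an explicit localization $T^{-1}\fo$ of $\fo$, use the compatibility of tensor products with localization to rewrite $\fo' \otimes_{\fo} \fo_S$ as $T^{-1}\fo'$, and then identify $T^{-1}\fo'$ with $\fo'_{S'}$.

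First I would let $T$ be the set of nonzero $t \in \fo$ whose ideal factorization involves only primes $\fp_v$ with $v \in S$. The identification $\fo_S = T^{-1}\fo$ is then standard: the inclusion $T^{-1}\fo \subset \fo_S$ is immediate, since every $t \in T$ is a $v$-adic unit for $v \notin S$; for the reverse, I would write the fractional ideal $(y)$ of any $y \in \fo_S$ as $I J^{-1}$ with $J$ an integral ideal of $\fo$ necessarily supported on $S$, and invoke finiteness of the ideal class group of $\fo$ to pass to a principal power $J^h = (s)$ with $s \in T$, giving $sy \in \fo$.

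The compatibility of localization with tensor products then yields $\fo' \otimes_{\fo} \fo_S \cong T^{-1}\fo'$, where $T$ is regarded as a multiplicative subset of $\fo'$ via $\fo \incl \fo'$. It remains to prove $T^{-1}\fo' = \fo'_{S'}$. The inclusion $\subset$ is immediate: for $t \in T$, the extended ideal $(t)\fo'$ decomposes into prime powers at the primes $\fp_w$ of $\fo'$ with $\divides{w}{v}$ for some $v \in S$, hence is supported on $S'$. For the reverse, given $y \in \fo'_{S'}$ I would write the fractional ideal of $y$ as $I J^{-1}$ with $J = \prod_{w \in S'} \fp_w^{j_w}$ integral, and then for each $v \in S$ choose an exponent $m_v$ large enough (in terms of the ramification indices of the $w \mid v$) that the extension to $\fo'$ of $\prod_{v \in S} \fp_v^{m_v}$ is divisible by $J$; a principal power of this ideal of $\fo$, obtained via class group finiteness, yields the required $t \in T$ with $ty \in \fo'$.

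The main obstacle is this final reverse inclusion: the denominator-clearing element $t$ must lie in $\fo$ rather than merely in $\fo'$, which forces a detour through both the ramification behaviour at primes in $S$ and the finiteness of the ideal class group of $\fo$, in contrast to what would otherwise be a one-line argument carried out entirely within $\fo'$.
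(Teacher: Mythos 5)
Your proof is correct and follows essentially the same route as the paper: the paper realises $\fo_S$ as the localisation $\fo[1/a]$, where $a$ generates $\prod_{v\in S}\fp_v^h$ with $h$ the class number of $k$ (a single-element stand-in for your multiplicative set $T$, obtained by the same appeal to class-group finiteness), and then concludes $\fo'\otimes_{\fo}\fo_S=\fo'[1/a]=\fo'_{S'}$ exactly as in your prime-matching step over $S'$.
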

\begin{proof}
  The following argument is taken from \cite{Con}:
  if $h$ is the class number of $k$ and $a \in \fo$ generates
  the principal ideal $\prod_{v\in S} \fp_v^h$,
  then $\fo_S = \fo[1/a]$.
  We conclude that $\fo' \otimes_{\fo} \fo_S = \fo'[1/a] = \fo'_{S'}$.
\end{proof}

\begin{lemma}
  \label{lem:eqorder}
  Let $f \in k[X]$ be monic and irreducible.
  Let $k' = k[X]/(f)$ with ring of integers $\fo'$.
  Then the following holds for all sufficiently large finite $S \subset \Places_k$:
  \begin{enumerate}
  \item
    \label{lem:eqorder1}
    $\fo_S[X]/(f) \approx_{\fo_S} \fo'_{S'}$,  where $S'= \Divisors_{k'/k}(S)$.
  \item
    \label{lem:eqorder2}
    $\fo_v[X]/(f)
    \approx_{\fo_v}
    \prod\limits_{\substack{w\in\Places_{k'}\\\divides w v}} \fo'_w$
    for $v \in \Places_k\setminus S$.
  \end{enumerate}
\end{lemma}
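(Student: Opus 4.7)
My plan is to prove the two parts together, with part (ii) following from (i) by completion.

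For part (i), the strategy is to show that $\fo_S[X]/(f)$ is an order in $k'$ containing $\fo_S$, and that it agrees with the integral closure $\fo'_{S'}$ after inverting finitely many more primes. First I would enlarge $S$ so that $f \in \fo_S[X]$. Since $f$ is monic, the image $\alpha$ of $X$ in $k'$ is integral over $\fo_S$, so there is a natural $\fo_S$-algebra injection $\fo_S[X]/(f) \hookrightarrow \fo'_{S'}$, where the latter is identified via Lemma~\ref{lem:o'} with the integral closure of $\fo_S$ in $k'$. Both of these are free $\fo_S$-modules of rank $\deg(f)$, so the cokernel $C := \fo'_{S'}/(\fo_S[X]/(f))$ is a finitely generated torsion $\fo_S$-module, hence a direct sum of finitely many modules $\fo_S/\fp_v^{e_v}$. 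Enlarging $S$ once more to include those places $v$, the module $C$ becomes zero and (i) holds.

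For part (ii), I would fix such an $S$ and take $v \in \Places_k \setminus S$. Since $\fo_v$ is the $\fp_v$-adic completion of the Dedekind domain $\fo_S$, applying $\fo_v \otimes_{\fo_S} -$ to the identity from (i) gives
\[
\fo_v[X]/(f) = \fo_v \otimes_{\fo_S} \fo_S[X]/(f) = \fo_v \otimes_{\fo_S} \fo'_{S'}.
\]
The right-hand side can be evaluated using the standard decomposition of completions in Dedekind extensions: since $\fo'_{S'}$ is the integral closure of $\fo_S$ in the finite separable extension $k'/k$, we have
\[
\fo_v \otimes_{\fo_S} \fo'_{S'} \approx_{\fo_v} \prod_{\substack{w \in \Places_{k'} \\ w \mid v}} \fo'_w,
\]
completing the proof of (ii).

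Nothing here is genuinely deep; the only things to be careful about are (a) ensuring that $f$ actually has $\fo_S$-coefficients, which is handled by the first enlargement of $S$, and (b) the passage from the global identification in (i) to the local one in (ii), which rests on the standard completion-decomposition for Dedekind extensions. The mild subtlety is that the order $\fo_S[X]/(f)$ need not equal $\fo'_{S'}$ for a given $S$—this is precisely the conductor issue—but since the conductor is supported at finitely many primes, it disappears after a harmless enlargement of~$S$.
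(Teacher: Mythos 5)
Your proof is correct and takes essentially the same route as the paper's: part (i) is the fact that $\fo_S[X]/(f)$ and the integral closure $\fo'_{S'}$ of $\fo_S$ in $k'$ span the same $k$-algebra and hence coincide after inverting the finitely many primes supporting the torsion cokernel, and part (ii) is flat base change to $\fo_v$ followed by the standard decomposition $\fo'_{S'}\otimes_{\fo_S}\fo_v\approx_{\fo_v}\prod_{w\mid v}\fo'_w$, which the paper verifies by hand (via the isomorphism $k'\otimes_k k_v\approx\prod_{w\mid v}k'_w$ and an integral-closure argument) rather than citing outright. One cosmetic remark: $\fo'_{S'}$ is a priori only projective, not necessarily free, over $\fo_S$, but your argument really only uses that both modules are finitely generated of rank $\deg f$ and that the map becomes an isomorphism over $k$, so nothing is affected.
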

\begin{proof}
  We freely use the exactness of localisation and completion;
  see \cite[Prop.\ 2.5, Thm~7.2]{Eis95}.
  Let $S_0 \subset \Places_k$ be finite with $f \in \fo_{S_0}[X]$.
  If $S \supset S_0$, then $\fo_{S_0}[X]/(f)
  \otimes_{\fo_{S_0}} \fo_S \approx_{\fo_S} \fo_S[X]/(f)$.
  As $\fo_{S_0}[X]/(f)$ and $\fo'$ both become isomorphic to $k'$ after base change to~$k$, for 
  sufficiently large finite $S \supset S_0$,
  $\fo_S[X]/(f) \approx_{\fo_S} \fo'_{S'}$ by Lemma~\ref{lem:o'}.
  This proves the first part.
  For the second part, first note that, using (\ref{lem:eqorder1}) and Lemma~\ref{lem:o'},
  \begin{equation}
    \label{eq:fov}
    \fo_v[X]/(f)
    \approx_{\fo_v}
    \fo_S[X]/(f) \otimes_{\fo_S} \fo_v
    \approx_{\fo_v}
    \fo'_{S'} \otimes_{\fo_S} \fo_v
    \approx_{\fo_v}
    \fo' \otimes_{\fo} \fo_v.
  \end{equation}

  Write $\fo_{(v)} := \fo_v \cap k$ for the $v$-adic valuation ring of $k$.
  It is easy to see that we may naturally identify $\fo' \otimes_{\fo} \fo_{(v)}$ with the
  integral closure of $\fo_{(v)}$ in $k'$.
  The key observation here is that if $a \in k'$ is a root of a monic
  polynomial $f(X) \in \fo_{(v)}[X]$, then there exists $m \in \fo$ with $v(m) =
  0$ and $ma \in \fo'$.
  Indeed, as in the proof of Lemma~\ref{lem:o'}, we find $m \in \fo$ such that
  for all $w \in \Places_k$,
  $w(m) > 0$ if and only if some coefficient $c$ of $f(X)$ satisfies $w(c) < 0$.
  By replacing $m$ by a suitable power, we can ensure that all coefficients of $m f(X)$
  belong to $\fo$ whence $ma$ is integral over $\fo$ and thus belongs to $\fo'$.

  We conclude (see \cite[Ch.~II, \S 8, Exerc.\ 4]{Neu99})
  that the canonical isomorphism $k' \otimes_k k_v \approx_{k_v}
  \prod\limits_{\divides w v} k'_w$ (\cite[Ch.~II, Prop.\ 8.3]{Neu99}) induces an isomorphism
  $\fo' \otimes_{\fo} \fo_v
  \approx_{\fo_v}
  \prod\limits_{\divides w v} \fo'_w$.
  Part (\ref{lem:eqorder2}) thus follows from the latter isomorphism and \eqref{eq:fov}.
\end{proof}

%%%%%%%%%%%%%%%%%%%%%%%%
\subsection{Proof of Theorem~\ref{thm:rednil}}
%%%%%%%%%%%%%%%%%%%%%%%%

Recall that $a_m(A,R)$ denotes the number of $A$-invariant $R$-submodules of
$R^n$ of index~$m$, where $A \in \Mat_n(R)$.

\begin{prop}
  \label{prop:ringprod}
  Let $R_1,\dotsc,R_r$ be rings with polynomial submodule growth.
  \begin{enumerate}
    \item
      \label{prop:ringprod1}
      $R := R_1 \times \dotsb \times R_r$ has polynomial submodule growth.
    \item
      \label{prop:ringprod2}
      (Cf.\ \cite[Lem.\ 1]{Sol77}.)
      Let $A \in \Mat_n(R)$ and
      let $A_i$ denote the image of $A$ under the map $\Mat_n(R) \to \Mat_n(R_i)$
      induced by the projection $R \to R_i$.
      Then $a_m(A,R) = a_m(A_1,R_1) \dotsb a_m(A_r,R_r)$ for each $m \in \NN$.
      Thus, $\zeta_{A,R}(s) = \zeta_{A_1,R_1}(s) \dotsb \zeta_{A_r,R_r}(s)$.
    \end{enumerate}
\end{prop}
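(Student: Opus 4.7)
The plan is to exploit the central idempotents $e_1, \dotsc, e_r \in R$ coming from the product decomposition $R = R_1 \times \dotsb \times R_r$; these satisfy $e_i^2 = e_i$, $e_ie_j = 0$ for $i \ne j$, and $\sum_i e_i = 1$, and they identify $e_i R$ with $R_i$. For any $R$-module $M$, multiplication by $e_i$ is an idempotent $R$-endomorphism, yielding a canonical decomposition $M = \bigoplus_{i=1}^r e_i M$ in which each $e_iM$ is naturally an $R_i$-module; if $M$ is finitely generated over $R$, then each $e_iM$ is finitely generated over $R_i$.

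The crucial observation is that every $R$-submodule $U \le M$ satisfies $e_i U \subseteq U$, so $U = \bigoplus_i e_i U$; conversely, for any tuple $(U_1, \dotsc, U_r)$ with $U_i \le_{R_i} e_i M$, the direct sum $\bigoplus_i U_i$ is automatically $R$-stable. This produces a bijection between $R$-submodules of $M$ and tuples of $R_i$-submodules of the $e_iM$, and it is compatible with additive indices in the sense that $\idx{M:U} = \prod_i \idx{e_iM : U_i}$ whenever the left-hand side is finite. Part (\ref{prop:ringprod1}) then follows: the number of index-$m$ submodules of $M$ is the Dirichlet convolution of the corresponding counting functions for the $e_iM$, and Dirichlet-convolving finitely many polynomially-bounded sequences yields a polynomially-bounded sequence.

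For part (\ref{prop:ringprod2}), I would specialise to $M = R^n$, which decomposes as $\bigoplus_i e_i R^n \approx \bigoplus_i R_i^n$ under the obvious identification. Because each $e_i$ is central in $R$, left-multiplication by $e_i$ commutes with the right action of $A$, so the decomposition is $A$-invariant; unravelling the definitions shows that $A$ acts on $e_iR^n \approx R_i^n$ exactly as $A_i$ does (this is the entrywise compatibility of the ring projection $R \onto R_i$ with matrix multiplication). Consequently, the bijection from the previous paragraph restricts to a bijection between $A$-invariant $R$-submodules of $R^n$ and tuples of $A_i$-invariant $R_i$-submodules of $R_i^n$, again respecting indices multiplicatively. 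Summing over all finite-index submodules gives the asserted factorisation of $\zeta_{A,R}(s)$ as a product of the $\zeta_{A_i,R_i}(s)$.

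I do not anticipate any real obstacle: the proposition reduces to a routine use of central idempotents. The only points requiring a little care are checking that the decomposition $M = \bigoplus e_iM$ preserves finite generation, that the induced action of $A$ on the $i$-th summand is genuinely given by $A_i$, and that additive indices multiply across the decomposition. Each of these is immediate once the idempotent bookkeeping is set up.
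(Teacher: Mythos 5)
Your proof is correct and is essentially the paper's own argument: the paper likewise uses the central idempotents $e_i$ to split $R^n = R_1^n \times \dotsb \times R_r^n$, observes that every ($A$-invariant) $R$-submodule decomposes uniquely as a product of ($A_i$-invariant) $R_i$-submodules with multiplicative additive indices, and deduces both parts from this bijection. The only point worth flagging is that the bijection yields the Dirichlet-convolution identity $a_m(A,R) = \sum_{m_1 \dotsm m_r = m} a_{m_1}(A_1,R_1) \dotsm a_{m_r}(A_r,R_r)$ --- which is what you actually prove and all that the factorisation $\zeta_{A,R}(s) = \zeta_{A_1,R_1}(s) \dotsm \zeta_{A_r,R_r}(s)$ requires --- rather than the pointwise product of the $a_m$'s displayed in the statement, which should be read as that convolution.
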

\begin{proof}
  Decompose $R^n = R_1^n \times \dotsb \times R_r^n$ with $R$ acting diagonally
  on $R^n$.
  Multiplication by $e_i = (\delta_{1i},\dotsc,\delta_{ni}) \in R$ acts as the natural
  map $R^n \to R_i^n \to R^n$.
  Given an $R_i$-submodule $U_i \le R_i^n$ for $i = 1,\dotsc,r$, we
  obtain an $R$-submodule $U = U_1 \times \dotsb \times U_r$ of $R^n$ and
  it is easy to see that every $R$-submodule of $R^n$ is of this form in a
  unique way.
  Evidently, $U$ has finite index in $R^n$ if and only if each $U_i$ has finite
  index in $R_i^n$.
  Part (\ref{prop:ringprod1}) is immediate and (\ref{prop:ringprod2}) follows since $A$ acts as $A_i$ on $R_i^n$.
\end{proof}

\begin{proof}[{Proof of Theorem~\ref{thm:rednil}}]
  Assuming that the finite set $S \subset \Places_k$ is sufficiently large, 
  we can make the following assumptions for all $v \in \Places_k \setminus S$:
  \begin{itemize}
  \item[\texttt{(NOR)}] $A = 1_m \otimes \CM(f) + \Nil(\bm\lambda) \otimes 1_d \in \Mat_n(\fo_v)$ for
    $d = \deg(f)$ and
    $\bm\lambda \parts m$ (Proposition~\ref{prop:jnf}).
  \item[\texttt{(DIA)}] $1_m \otimes \CM(f) \in \fo_v[A]$ (Lemma~\ref{lem:diagin}).
  \item[\texttt{(INT)}]
    $\fo_v[X]/(f) \approx_{\fo_v} \prod\limits_{\substack{w \in \Places_{k'}\\ \divides w v}} \fo'_w$
    (Lemma~\ref{lem:eqorder}).
  \end{itemize}
  Let $v \in \Places_k\setminus S$.
  First note that as an $\fo_v$-module, $\fo_v[\CM(f)]$ is freely generated 
  by $(1_d,\CM(f),\dotsc,\CM(f)^{d-1})$.
  It follows easily that $\fo_v^n$ is free of rank $m$ as an $\fo_v[\CM(f)]$-module.

  Using Lemma~\ref{lem:evalCf},\texttt{(INT)} allows us to identify
  $\fo_v[\CM(f)] = \fo_v[X]/(f) = \prod_{\divides w v} \fo'_w =: R_v$.
  Thanks to \texttt{(NOR)}, we may then regard $A$ as an $m \times m$ matrix over $R_v$.
  It follows from  \texttt{(DIA)} that $A$-invariant $\fo_v$-submodules of
  $\fo_v^n$ coincide with $A$-invariant $R_v$-submodules of $R_v^m$.
  Using \texttt{(DIA)} once more, the latter $R_v$-submodules are precisely those
  invariant under $A - \CM(f) \dtimes 1_m = \Nil(\bm\lambda)$.
  Therefore, $\zeta_{A,\fo_v}(s) = \zeta_{\Nil(\bm\lambda),R_v}(s)$.
  Noticing that the $(0,1)$-matrix $\Nil(\bm\lambda)$ is preserved
  by each projection $R_v \to \fo'_w$, Proposition~\ref{prop:ringprod} shows that 
  $\zeta_{\Nil(\bm\lambda),R_v}(s) = \prod\limits_{\divides w v} \zeta_{\Nil(\bm\lambda),\fo'_w}(s)$
  which concludes the proof.
\end{proof}

%%%%%%%%%%%%%%%%%%%%%%%%%%%%%%%%%%%%%%%%%%%%%%%%%%%%%%%%%%%%%%%%%%%%%%%%
\section{The case of a nilpotent matrix}
\label{s:nilpotent}
%%%%%%%%%%%%%%%%%%%%%%%%%%%%%%%%%%%%%%%%%%%%%%%%%%%%%%%%%%%%%%%%%%%%%%%%

Let $\bm\lambda \parts n$. 
Recall the definitions
of $\ind{\bm\lambda} j$ from the introduction
and of $\Nil(\bm\lambda)$ from \S\ref{s:rednilpotent}.
\begin{defn*}
$W_{\bm\lambda}(X,Y) = 1 / \prod\limits_{j=1}^n \bigl( 1 - X^{j-1}
Y^{\ind{\bm\lambda}j}\bigr)
\in \QQ(X,Y)$.
\end{defn*}
Equivalently, $W_{\bm\lambda}(X,Y) = 1
/\prod\limits_{i=1}^{\len{\bm\lambda}}\prod\limits_{j=1}^{\lambda_i}
\bigl(1-X^{\us_{i-1}(\bm\lambda)+j-1} Y^i\bigr)$.
This section is devoted to proving the following.

\begin{thm}
  \label{thm:nilpotent}
  Let $\bm\lambda \parts n$ and
  let $K$ be a $p$-adic field.
  Then $$\zeta_{\Nil(\bm\lambda^*),\fO_K}(s) = W_{\bm\lambda}(q_K,q_K^{-s}).$$
\end{thm}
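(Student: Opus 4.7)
My plan for Theorem~\ref{thm:nilpotent} is to develop a canonical normal form for $A := \Nil(\bm\lambda^*)$-invariant full-rank sublattices of $\fO_K^n$ adapted to the Jordan-block structure, and then to enumerate such sublattices directly. Take the ordered $\fO_K$-basis $f_1,\ldots,f_n$ of $\fO_K^n$ obtained by reading the cells of the Young diagram of $\bm\lambda^*$ in column-major order (top-to-bottom within each column, leftmost column first). Under this ordering $A$ acts as the shift $f_k \mapsto f_{k+\lambda_{\ind{\bm\lambda}k}}$ whenever the target index lies in $\{1,\ldots,n\}$, and as $f_k \mapsto 0$ otherwise. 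In particular, the backward $A$-chain terminating at $f_j$ has length exactly $\ind{\bm\lambda}{j}$. The ordering is chosen so that $A$-invariance of a sublattice $U$ with echelon basis $u_k = \pi^{a_k}f_k + \sum_{j<k}B_{kj}f_j$ can be resolved inductively via the equations $u_k A \in U$.

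The heart of the argument is then the normal form of \S\ref{ss:nf}: every $A$-invariant full-rank sublattice $U \le \fO_K^n$ admits a unique canonical basis adapted to the chosen ordering, and the set of such $U$'s is naturally parameterized by tuples $(b_1,\ldots,b_n) \in \ZZ_{\ge 0}^n$ (``per-cell valuations'') together with a finite amount of residue data. The normal form should be arranged so that (a) the $\fO_K$-index of $U$ equals $q_K^{\sum_j \ind{\bm\lambda}{j}\, b_j}$---the exponent $\ind{\bm\lambda}{j}$ arising from the backward-chain length at $f_j$, so that each unit increment of $b_j$ contributes to $\ind{\bm\lambda}{j}$ of the diagonal valuations---and (b) the number of admissible residue configurations for a given $(b_j)$ equals $q_K^{\sum_j (j-1)\,b_j}$. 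Summing geometric series factor by factor then yields
\[
\zeta_{\Nil(\bm\lambda^*),\fO_K}(s)
= \sum_{(b_j)\in\ZZ_{\ge 0}^n}
q_K^{\sum_j(j-1) b_j}\, q_K^{-s\sum_j \ind{\bm\lambda}{j} b_j}
= \prod_{j=1}^n \frac{1}{1 - q_K^{j-1}q_K^{-\ind{\bm\lambda}{j}s}}
= W_{\bm\lambda}(q_K,q_K^{-s}).
\]

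The main obstacle is the residue count in (b): the $A$-invariance relations entangle the off-diagonal entries $B_{kj}$ across different Jordan chains, and establishing that the admissible configurations (modulo the equivalences among different echelon bases of the same $U$) naturally group into $q_K^{j-1}$-many classes per unit increment of $b_j$ requires a careful induction along the forward chains $k \mapsto k+\lambda_{\ind{\bm\lambda}k}$, solving the divisibility conditions one position at a time. The factor $q_K^{j-1}$ is reminiscent of Schubert-cell sizes in a Grassmannian, which suggests that the parameterization can alternatively be viewed through a $p$-adic integration / affine-Grassmannian lens. As sanity checks, the boundary cases $\bm\lambda = (n)$ (giving all finite-index sublattices of $\fO_K^n$, with zeta function $\prod_{k=0}^{n-1}\zeta_{\fO_K}(s-k)$) and $\bm\lambda = (1^n)$ (giving the ideal zeta function of the local ring $\fO_K[X]/(X^n)$) confirm that the product formula takes the correct form in these extremes.
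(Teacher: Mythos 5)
There is a genuine gap. Your step (b) --- the claim that, for a fixed vector of ``per-cell valuations'' $(b_1,\dotsc,b_n)$, the admissible residue configurations number exactly $q_K^{\sum_j (j-1)b_j}$ --- is not an auxiliary detail but is essentially equivalent to the theorem itself, and you explicitly leave it unproven (``the main obstacle \dots requires a careful induction''). The same applies to step (a): the existence of a canonical parameterisation of $A$-invariant lattices by independent per-cell parameters $b_j$ with index $q_K^{\sum_j \ind{\bm\lambda}{j} b_j}$ is asserted (``should be arranged so that''), not established. A priori the natural data of an echelon basis are the $n$ diagonal valuations together with off-diagonal residues, and invariance couples these across Jordan chains in exactly the way you acknowledge; showing that this coupled system nonetheless resolves into the claimed product structure is the whole content of the computation. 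Your sanity checks ($\bm\lambda=(n)$ and $\bm\lambda=(1^n)$) and the $\bm\lambda=(1,1)$-type cases do work out, but they do not exhibit the entanglement, so they give no evidence that the induction closes in general. There is also a smaller inaccuracy in your setup: with the column-major ordering, the correct action sends the cell in row $j$, column $i$ to the cell in row $j$, column $i+1$ \emph{if that cell exists} (i.e.\ $j\le\lambda_{i+1}$), not merely if the shifted index $k+\lambda_{\ind{\bm\lambda}k}$ is $\le n$; for $\bm\lambda=(3,1,1)$ your rule sends $f_2\mapsto f_5$ rather than $f_2\mapsto 0$. (The resulting matrix happens to have the right Jordan type in small examples, but your ``backward-chain'' bookkeeping should be based on the matrix $\Dual(\bm\lambda)$ of \S\ref{ss:nf}, as in \eqref{eq:A_alt}.)

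For comparison, the paper's proof supplies precisely the ingredient you are missing: instead of a direct lattice count, it writes $\zeta_{\Dual(\bm\lambda),\fO_K}(s)$ as the $p$-adic cone integral of Proposition~\ref{prop:coneint}, exploits the recursive structure of $V_K(\Dual(\bm\lambda))$ (Corollary~\ref{cor:rec}), and then applies the change-of-coordinates Lemma~\ref{lem:coc}: a $\ZZ$-defined birational map with Jacobian $1$ which fixes the diagonal variables and converts the lattice-membership conditions into \emph{independent} monomial divisibility conditions (Proposition~\ref{prop:make_monomial}). It is this measure-preserving, diagonal-preserving untangling --- essentially Gaussian elimination performed once and for all at the level of coordinates --- that replaces your unproven residue count; after it, the integral factors and the product $W_{\bm\lambda}(q_K,q_K^{-s})$ drops out by the elementary identity \eqref{eq:x_mid_y}. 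If you wish to pursue your combinatorial route, you would need to prove an analogue of that lemma (or an explicit canonical-form theorem for invariant lattices with a uniqueness statement and a count of fibres), which is where the real work lies.
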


Prior to giving a proof of Theorem~\ref{thm:nilpotent}, 
we record a few consequences.

\begin{cor}
  \label{cor:nilpotent_global}
  Let $A \in \Mat_n(k)$ be nilpotent of type $\bm\lambda$ (see \S\ref{s:rednilpotent}).
  Then for all sufficiently large finite sets $S \subset \Places_k$,
  \[
  \zeta_{A,\fo_S}(s) = \prod\limits_{j=1}^n \zeta_{\fo_S}\Bigl(\ind{(\bm\lambda^*)}
  j \dtimes s - j + 1\Bigr).
  \]
  If $A \in \Mat_n(\fo)$ and $A \approx_{\fo} \Nil(\bm\lambda)$,
  then we may take $S = \emptyset$.
  \qed
\end{cor}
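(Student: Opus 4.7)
The plan is to deduce the corollary directly from Theorem~\ref{thm:nilpotent} combined with the reduction Theorem~\ref{thm:rednil} and the Euler product. Since $A$ is nilpotent, its minimal polynomial is a power of the irreducible polynomial $f = X$, so $A$ is $X$-primary of type $\bm\lambda$ in the sense of \S\ref{s:rednilpotent}. Setting $k' = k$, $\fo' = \fo$, the only place of $k'$ dividing $v \in \Places_k$ is $v$ itself. Therefore Theorem~\ref{thm:rednil} applied with $f = X$ yields, for almost all $v \in \Places_k$,
\[
\zeta_{A,\fo_v}(s) = \zeta_{\Nil(\bm\lambda), \fo_v}(s).
\]

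Next I would apply Theorem~\ref{thm:nilpotent} to the right-hand side. Since $\bm\lambda^{**} = \bm\lambda$, choosing $\bm\mu = \bm\lambda^*$ in Theorem~\ref{thm:nilpotent} gives, for every non-Archimedean place $v$,
\[
\zeta_{\Nil(\bm\lambda), \fo_v}(s) \,=\, W_{\bm\lambda^*}(q_v, q_v^{-s}) \,=\, \prod_{j=1}^n \frac{1}{1 - q_v^{j-1} q_v^{-\ind{(\bm\lambda^*)}j \dtimes s}} \,=\, \prod_{j=1}^n \zeta_{\fo_v}\bigl(\ind{(\bm\lambda^*)}j \dtimes s - j + 1\bigr),
\]
using $\zeta_{\fo_v}(s) = (1 - q_v^{-s})^{-1}$.

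To obtain the global formula, I would then invoke the Euler product factorisation stated after Theorem~\ref{thm:segal}: for sufficiently large finite $S \subset \Places_k$ (namely, large enough to absorb the finitely many exceptional places produced by Theorem~\ref{thm:rednil} and to guarantee $A \in \Mat_n(\fo_S)$),
\[
\zeta_{A,\fo_S}(s) \,=\, \prod_{v \in \Places_k \setminus S} \zeta_{A,\fo_v}(s) \,=\, \prod_{v \in \Places_k \setminus S}\prod_{j=1}^n \zeta_{\fo_v}\bigl(\ind{(\bm\lambda^*)}j \dtimes s - j + 1\bigr).
\]
Swapping the order of the two products and recognising the inner product over places as the Dedekind-type Euler product $\zeta_{\fo_S}(\,\cdot\,) = \prod_{v \notin S}\zeta_{\fo_v}(\,\cdot\,)$ yields the claimed identity.

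For the final assertion, suppose $A \in \Mat_n(\fo)$ with $A \approx_{\fo} \Nil(\bm\lambda)$. Then $A \approx_{\fo_v} \Nil(\bm\lambda)$ for every $v \in \Places_k$, so the identity $\zeta_{A,\fo_v}(s) = \zeta_{\Nil(\bm\lambda),\fo_v}(s)$ holds without any exclusion of places; moreover Theorem~\ref{thm:nilpotent} already applies to every such $v$. Taking the Euler product over all $v \in \Places_k$ therefore gives the formula with $S = \emptyset$. No step presents a real obstacle: the work lies entirely in Theorems~\ref{thm:rednil} and~\ref{thm:nilpotent}, and once these are combined the corollary is a matter of bookkeeping.
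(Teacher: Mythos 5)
Your argument is correct and is essentially the intended one: the paper treats this corollary as immediate from Theorem~\ref{thm:nilpotent} (applied to $\bm\mu=\bm\lambda^*$, using $\bm\lambda^{**}=\bm\lambda$) together with the Euler product, exactly as you do. The only cosmetic difference is that you route the identification $\zeta_{A,\fo_v}(s)=\zeta_{\Nil(\bm\lambda),\fo_v}(s)$ through Theorem~\ref{thm:rednil} with $f=X$, where it suffices to note that $A\approx_k\Nil(\bm\lambda)$ implies $A\approx_{\fo_v}\Nil(\bm\lambda)$ (hence equal local zeta functions) for almost all $v$ -- and over $\fo$ itself in the last assertion, as you observe.
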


As an application, we can determine the ideal zeta function of
$\ZZ[X]/(X^n)$. Recall that $\zeta(s)$ denotes the Riemann zeta function.

\begin{cor}
  \label{cor:ideals_Xn}
  For every prime $p$,
  $$\zeta_{\ZZ_p[X]/(X^n)}(s) = 1 / \prod\limits_{j=1}^n(1-p^{j-1 - js}).$$
  In particular,
  $$\zeta_{\ZZ[X]/(X^n)}(s) = \prod\limits_{j=1}^n
  \zeta(js - j + 1).$$
\end{cor}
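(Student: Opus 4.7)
The plan is to reduce the computation to an instance of Theorem~\ref{thm:nilpotent} by identifying the ideal zeta function of the ring $\ZZ_p[X]/(X^n)$ with the submodule zeta function associated to the nilpotent matrix describing multiplication by $X$.

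For the local statement, I would first note that, with respect to the $\ZZ_p$-basis $1,X,\dotsc,X^{n-1}$, the $\ZZ_p$-linear endomorphism ``multiplication by $X$'' of $R := \ZZ_p[X]/(X^n)$ is represented by $\Nil((n)) = \CM(X^n) \in \Mat_n(\ZZ_p)$. Since $R$ is generated as a $\ZZ_p$-algebra by $X$, a $\ZZ_p$-submodule of $R$ is an ideal if and only if it is stable under multiplication by $X$. Consequently $\zeta_R(s) = \zeta_{\Nil((n)),\ZZ_p}(s)$. Now apply Theorem~\ref{thm:nilpotent} with $\bm\lambda = (1^n)$, so that $\bm\lambda^* = (n)$: here $\len{\bm\lambda} = n$ and $\us_i(\bm\lambda) = i$, whence $\ind{\bm\lambda}{j} = j$ for $1 \le j \le n$. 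Therefore
\[
\zeta_{\ZZ_p[X]/(X^n)}(s) = W_{(1^n)}(p, p^{-s}) = \prod_{j=1}^n \bigl(1 - p^{j-1}p^{-js}\bigr)^{-1} = \prod_{j=1}^n \bigl(1 - p^{j-1-js}\bigr)^{-1},
\]
which is the desired local formula.

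For the global statement, two equivalent routes are available. The quickest is to invoke the Euler product for ideal zeta functions (a special case of the Euler factorisation stated just before Theorem~\ref{thm:dSG}) together with the Euler product $\zeta(js - j + 1) = \prod_p (1-p^{-(js-j+1)})^{-1} = \prod_p (1-p^{j-1-js})^{-1}$, and multiply the local formulae over all primes. Alternatively, since $\Nil((n))$ is a nilpotent matrix of type $(n)$ with entries already in $\ZZ$, one may apply Corollary~\ref{cor:nilpotent_global} directly with $S = \emptyset$, yielding $\zeta_{\Nil((n)),\ZZ}(s) = \prod_{j=1}^n \zeta(js - j + 1)$. Either route gives
\[
\zeta_{\ZZ[X]/(X^n)}(s) = \prod_{j=1}^n \zeta(js-j+1).
\]

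There is no substantive obstacle: the entire content is encoded by choosing the right partition so that Theorem~\ref{thm:nilpotent} (and, for the global part, Corollary~\ref{cor:nilpotent_global}) can be applied. The only point requiring mild care is the correct identification of duals, namely $(1^n)^* = (n)$, so that the matrix $\Nil(\bm\lambda^*)$ of the theorem matches the multiplication-by-$X$ operator on $\ZZ_p[X]/(X^n)$.
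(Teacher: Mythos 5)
Your argument is correct and is essentially the paper's own proof: the paper's one-sentence proof just observes that multiplication by $X$ on $\ZZ[X]/(X^n)$ with respect to the basis $(1,X,\dotsc,X^{n-1})$ is $\CM(X^n)=\Nil((n))$, leaving the application of Theorem~\ref{thm:nilpotent} (with $\bm\lambda=(1^n)$, $\bm\lambda^*=(n)$) and of Corollary~\ref{cor:nilpotent_global} with $S=\emptyset$ implicit, exactly as you spell them out. Your identification of ideals with $X$-invariant $\ZZ_p$-submodules and your dual-partition bookkeeping are both correct.
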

\begin{proof}
  The matrix of multiplication by $X$ acting on $\ZZ[X]/(X^n)$ with respect to the
  basis $(1,X,\dotsc,X^{n-1})$, i.e.\ the companion matrix of $X^n$, is
  precisely $\Nil((n))$.
\end{proof}

\begin{rem*}
  The subalgebra zeta functions of $\ZZ_p[X]/(X^n)$ are known only for $n \le 4$
  and sufficiently large primes $p$. Moreover, the author's computation of these
  zeta functions for $n = 4$ relied on fairly involved machine calculations;
  see \cite[\S 9.2]{padzeta}.
  (The formula for $\zeta_{\ZZ_p[X]/(X^4)}(s)$ in \cite{padzeta} takes up
  about a page in total.)
\end{rem*}

Subobject zeta functions over rings other than $\fo_S$ or $\fo_v$ have received
little attention so far. 
We obtain the following.

\begin{thm}
  \label{thm:ZZX}
  \quad
  \begin{enumerate}
  \item
    \label{thm:ZZX1}
    $\ZZ\llb X\rrb$ has polynomial submodule growth.
  \item
    \label{thm:ZZX2}
    $\zeta_{\ZZ\llb X \rrb}(s) = \prod\limits_{j=1}^\infty \zeta(js - j + 1)$
    for $\Real(s) > 1$.
  \end{enumerate}
\end{thm}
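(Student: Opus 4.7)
The plan is to reduce to the case of the truncated polynomial rings $\ZZ[X]/(X^n)$ (whose ideal zeta functions are computed in Corollary~\ref{cor:ideals_Xn}), via the key observation that every ideal $I \normal \ZZ\llb X \rrb$ of finite index $m$ contains $X^m$. Indeed, the images of $1, X, \dotsc, X^m$ in the finite ring $\ZZ\llb X \rrb/I$ cannot all be distinct, so $X^i(1-X^{j-i}) \in I$ for some $0 \le i < j \le m$; since $1-X^{j-i}$ has unit constant term and is therefore invertible in $\ZZ\llb X \rrb$, this forces $X^i \in I$ and hence $X^m \in I$. Under the canonical identification $\ZZ\llb X \rrb/(X^n) = \ZZ[X]/(X^n)$, the natural surjection thus yields, for each $n \ge m$, a bijection between the index-$m$ ideals of $\ZZ\llb X \rrb$ and the index-$m$ ideals of $\ZZ[X]/(X^n)$.

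For part~(\ref{thm:ZZX2}), write $a_m$ (resp.\ $a_m^{(n)}$) for the corresponding counts; the bijection says $a_m^{(n)}$ is non-decreasing in $n$ and equals $a_m$ as soon as $n \ge m$. Corollary~\ref{cor:ideals_Xn} gives $\zeta_{\ZZ[X]/(X^n)}(s) = \prod_{j=1}^n \zeta(js - j + 1)$, and the tail bound $\zeta(js - j + 1) - 1 = O(2^{-j(\Real(s)-1)})$ shows that the infinite product $\prod_{j \ge 1} \zeta(js - j + 1)$ converges absolutely for $\Real(s) > 1$. Monotone convergence applied coefficient-wise to the Dirichlet series then gives
\[
\zeta_{\ZZ\llb X \rrb}(s) = \sum_{m=1}^\infty a_m m^{-s} = \lim_{n\to\infty} \zeta_{\ZZ[X]/(X^n)}(s) = \prod_{j=1}^\infty \zeta(js - j + 1)
\]
for $\Real(s) > 1$. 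For part~(\ref{thm:ZZX1}), any finitely generated $\ZZ\llb X \rrb$-module $M$ admits a surjection from some $\ZZ\llb X \rrb^r$ under which preimages of index-$m$ submodules of $M$ are themselves index-$m$ submodules of $\ZZ\llb X \rrb^r$, reducing the task to bounding the submodule counts $b_m$ of the free modules. The same argument gives $X^m \ZZ\llb X \rrb^r \subseteq N$ for any index-$m$ submodule $N \le \ZZ\llb X \rrb^r$, bringing matters to index-$m$ submodules of $(\ZZ[X]/(X^m))^r$; on the latter, $X$ acts as $\Nil((m,\dotsc,m))$ (with $r$ parts) on $\ZZ^{rm}$. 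Corollary~\ref{cor:nilpotent_global} then supplies the explicit formula $\prod_{i=1}^m \prod_{k=1}^r \zeta(i(s-r) + r - k + 1)$, whose infinite analogue (with $m$ replaced by $\infty$) converges absolutely for $\Real(s) > r$ and term-wise bounds $\sum_m b_m m^{-s}$, forcing polynomial growth of $b_m$.

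The main obstacle is securing the reduction ``$X^m \in I$''; once this is in hand, both parts follow from routine monotone convergence arguments combined with Corollaries~\ref{cor:ideals_Xn} and~\ref{cor:nilpotent_global}. A point worth noting is that $\ZZ\llb X \rrb$ is neither finitely generated over $\ZZ$ nor semi-local, so Segal's Theorem~\ref{thm:segal} does not apply directly, making the explicit reduction above genuinely necessary for part~(\ref{thm:ZZX1}).
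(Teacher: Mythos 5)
Your proof is correct, and its backbone is the same as the paper's: reduce to the truncations $\ZZ[X]/(X^n)$ via the observation that finite-index ideals (and submodules of free modules) contain a power of $X$, then invoke Corollary~\ref{cor:ideals_Xn} and settle convergence. The implementation differs in three genuine ways. First, you justify the key containment by pigeonhole in the finite quotient plus invertibility of $1-X^{j-i}$, which yields the effective statement $X^m \in I$ for an ideal of index $m$; the paper instead uses that every maximal ideal of $\ZZ\llb X\rrb$ has the form $(X,p)$, so $X$ acts nilpotently on finite-length modules, obtaining only $X^n\ZZ\llb X\rrb^d \subseteq U$ for some unspecified $n$ (together with Noetherianity). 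Your version is more elementary, and the explicit exponent is exactly what powers your part~(\ref{thm:ZZX1}). Second, for part~(\ref{thm:ZZX1}) the paper transfers the problem to $\ZZ[X]$-submodules of $\ZZ[X]^d/X^n\ZZ[X]^d$ and quotes Segal's Theorem~\ref{thm:segal} for $\ZZ[X]$; you avoid Segal entirely, using $X^m\ZZ\llb X\rrb^r \subseteq N$ and the explicit formula of Corollary~\ref{cor:nilpotent_global} for $\Nil((m,\dotsc,m))$ to bound the counts uniformly in $m$ by the product $\prod_{i\ge 1}\prod_{k=1}^r \zeta\bigl(i(s-r)+r-k+1\bigr)$, convergent for $\Real(s)>r$. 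This costs a little bookkeeping (one must note that only the factors with $2^i\le m$ affect the coefficient of $m^{-s}$, so the comparison is legitimate) but buys independence from Theorem~\ref{thm:segal} and an explicit abscissa bound for the free module of rank $r$. Third, for convergence in part~(\ref{thm:ZZX2}) you use the tail estimate $\zeta(js-j+1)-1=\mathcal O\bigl(2^{-j(\Real(s)-1)}\bigr)$ plus coefficientwise monotone convergence, whereas the paper rearranges $\sum_j(\zeta(js-j+1)-1)$ into a Dirichlet series $\sum_n g_n n^{-s}$ and bounds its partial sums by $\mathcal O(N^{1+\varepsilon})$; both arguments are valid, and yours reaches the same conclusion more quickly.
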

\begin{proof}
  It is well-known that the maximal ideals of $\ZZ\llb X \rrb$ are precisely of
  the form $(X,p)$ for a rational prime $p$.
  It follows that $X$ acts nilpotently on every $\ZZ\llb X\rrb$-module of finite
  length.
  Hence, if $U \le_{\ZZ\llb X\rrb} \ZZ\llb X\rrb^d$ has finite index,
  then $U$ contains $X^n \ZZ\llb X\rrb^d$ for some $n \ge 1$.
  As $\ZZ\llb X\rrb$ is Noetherian, $U$ thus corresponds to a $\ZZ[X]$-submodule
  of $\ZZ[X]^d/X^n\ZZ[X]^d$.
  In particular, (\ref{thm:ZZX1}) follows since $\ZZ[X]$ has polynomial
  submodule growth by Theorem~\ref{thm:segal}.
  Moreover, Corollary~\ref{cor:ideals_Xn} implies the identity in
  (\ref{thm:ZZX2}) on the level of formal Dirichlet series.

  In order to establish (absolute) convergence, let $s > 1$ be real.
  By well-known facts on infinite products,
  $\prod_{j=1}^\infty \zeta(js-j+1)$ converges (absolutely) if and only if the
  same is true of $F(s) := \sum_{j=1}^\infty (\zeta(js-j+1)-1)$.
  Using the non-negativity of the coefficients of each Dirichlet series
  $\zeta(js-j+1)$, we obtain
  \begin{align*}
    F(s) & = \sum_{j=1}^\infty \sum_{n=2}^\infty n^{j-1}(n^j)^{-s}
    = \sum_{n=2}^\infty g_n n^{-s},
  \end{align*}
  where
  $$g_n := n \dtimes \sum_{\substack{m \ge 2,j \ge 1\\n = m^j}} \frac 1 m.$$
  We see that for $N \ge 2$,
  \begin{align*}
  \sum_{n=2}^N g_n & \le N  \sum_{\substack{m \ge 2, j \ge 1\\m^j \le
      N}} \frac 1 m \le N \sum_{m=2}^N \frac{2 \log N} m = \mathcal O(N (\log
  N)^2)
  = \mathcal O(N^{1+\varepsilon})
  \end{align*}
  for every $\varepsilon > 0$.
  In particular, $F(s)$ and $\zeta_{\ZZ\llb X\rrb}(s)$ both converge for $\Real(s) > 1$.
\end{proof}

\begin{rem}
Note, in particular, that $\zeta_{\ZZ\llb X\rrb}(s)$ has an
essential singularity at $s = 1$ and therefore does not admit meromorphic
continuation beyond its abscissa of convergence.
This illustrates that Theorem~\ref{thm:dSG}(\ref{thm:dSG2}) does not carry over
to general ground rings with polynomial submodule growth.
\end{rem}

In order to prove Theorem~\ref{thm:nilpotent}, we employ the $p$-adic
integration machinery from \cite{GSS88}.
For a ring $R$, let $\Tr_n(R)$ denote the $R$-algebra of upper triangular
$n\times n$-matrices over $R$.
Recall that an element of a ring is \emph{regular} if it is not a zero divisor.
Write $\Tr_n^\reg(R) = \{ \bm x \in \Tr_n(R) : \det(\bm x) \in R \text{ is regular}\}$.
For a $p$-adic field $K$, let $\mu_K$ denote the Haar measure on $K^n$ with
$\mu_K(\fO_K^n) = 1$.

\begin{prop}[{\cite[\S 3]{GSS88}}]
  \label{prop:coneint}
  Let $K$ be a $p$-adic field
  and $A \in \Mat_n(\fO_K)$.
  Define
  $V_K(A) := \bigl\{
  \bm x \in \Tr^\reg_n(\fO_K) : \fO_K^n \bm x A \subset \fO_K^n \bm x
  \bigr\}$
  to be the set of upper-triangular $n\times n$ matrices over~$\fO_K$ whose rows
  span an $A$-invariant $\fO_K$-submodule of finite index of $\fO_K^n$.
  Then
  \begin{equation}
    \label{eq:GSS}
    \zeta_{A,\fO_K}(s) = (1-q_K^{-1})^{-n} \int_{V_K(A)} \abs{x_{11}}_K^{s-1}
    \abs{x_{22}}_K^{s-2} \dotsb \abs{x_{nn}}_K^{s-n} \dd\mu_K(\bm x).
  \end{equation}
\end{prop}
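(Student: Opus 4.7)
The plan is to decompose $V_K(A)$ according to $A$-invariant submodules and reduce the integral to a Dirichlet series in indices. Since $\fO_K$ is a discrete valuation ring, every finite-index $\fO_K$-submodule $M \le \fO_K^n$ is free of rank $n$ and, by Hermite normal form, admits bases whose matrices (with basis vectors as rows) are upper-triangular with non-zero diagonal, hence lie in $\Tr_n^\reg(\fO_K)$. Writing $H_M := \bigl\{ \bm x \in \Tr_n^\reg(\fO_K) : \fO_K^n \bm x = M \bigr\}$, the defining condition $\fO_K^n \bm x A \subset \fO_K^n \bm x$ of $V_K(A)$ says precisely that the row span $M$ is $A$-invariant, so $V_K(A) = \bigsqcup_M H_M$ where $M$ ranges over the $A$-invariant finite-index $\fO_K$-submodules of $\fO_K^n$.

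Next, if $\bm x, \bm y \in \Tr_n^\reg(\fO_K)$ satisfy $\fO_K^n \bm x = \fO_K^n \bm y$, then $\bm y \bm x^{-1} \in \GL_n(\fO_K)$ is necessarily upper-triangular over $K$ and hence lies in $\Tr_n^\times(\fO_K) := \Tr_n(\fO_K) \cap \GL_n(\fO_K)$ (upper-triangular matrices with units on the diagonal). Consequently, each $H_M$ is a single left coset of $\Tr_n^\times(\fO_K)$; in particular $\bm x \mapsto \abs{x_{ii}}_K$ is constant on $H_M$ for each $i$, and $\prod_i \abs{x_{ii}}_K = \abs{\det(\bm x)}_K = \idx{\fO_K^n : M}^{-1}$ for every $\bm x \in H_M$.

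The key computation is the Jacobian of $\phi_{\bm h} : \Tr_n^\times(\fO_K) \to H_M$, $g \mapsto g \bm h$, for a fixed $\bm h \in H_M$. Viewing both sides inside $K^{n(n+1)/2}$ via the on-and-above-diagonal entries and grouping the variables by rows of $g$, the differential of $\phi_{\bm h}$ is block upper-triangular, with $i$-th block an upper-triangular $(n-i+1) \times (n-i+1)$ matrix whose diagonal is $(h_{ii}, h_{i+1,i+1}, \dotsc, h_{nn})$. Thus the $p$-adic absolute value of the Jacobian equals $\prod_{k=1}^n \abs{h_{kk}}_K^{k}$. Combined with $\mu_K(\Tr_n^\times(\fO_K)) = (1-q_K^{-1})^n$ (since $\mu_K(\fO_K^\times) = 1 - q_K^{-1}$), the change of variables $\bm x = g \bm h$ yields
\[
\int_{H_M} \prod_{i=1}^n \abs{x_{ii}}_K^{s-i} \dd\mu_K(\bm x)
= \prod_{i=1}^n \abs{h_{ii}}_K^{s-i} \cdot \prod_{k=1}^n \abs{h_{kk}}_K^{k} \cdot (1-q_K^{-1})^n
= (1-q_K^{-1})^n \, \idx{\fO_K^n : M}^{-s}.
\]

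Summing over $A$-invariant finite-index submodules $M$ and dividing by $(1-q_K^{-1})^n$ produces \eqref{eq:GSS}. The main obstacle is the Jacobian calculation; once it is in place, the remainder of the argument is routine bookkeeping with the Hermite-form parametrisation and the normalisation of Haar measure.
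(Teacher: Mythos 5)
Your argument is correct and is the standard one: the paper cites \cite[\S 3]{GSS88} for this proposition and does not reprove it, and the decomposition of $V_K(A)$ into fibres $H_M = \Tr_n^\times(\fO_K)\,\bm h$ over $A$-invariant finite-index sublattices $M$, together with the Jacobian computation $\prod_{k=1}^n \abs{h_{kk}}_K^{k}$ for right multiplication by $\bm h$ and the normalisation $\mu_K(\Tr_n^\times(\fO_K)) = (1-q_K^{-1})^n$, is precisely how that reference and subsequent accounts establish \eqref{eq:GSS}. The point that makes everything close up, namely $\abs{(g\bm h)_{ii}}_K = \abs{h_{ii}}_K$ for $g\in\Tr_n^\times(\fO_K)$ so that the integrand is constant on each fibre and $\prod_i\abs{h_{ii}}_K^{s-i}\cdot\prod_k\abs{h_{kk}}_K^{k} = \abs{\det\bm h}_K^{s} = \idx{\fO_K^n:M}^{-s}$, is correct. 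Two cosmetic remarks: the within-row Jacobian block is most naturally described as lower triangular in the ordering you use (this does not affect the determinant), and $H_M = \Tr_n^\times(\fO_K)\,\bm h$ is a \emph{right} coset in the usual terminology; neither affects the validity of the proof.
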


\paragraph{Strategy.}
In order to prove Theorem~\ref{thm:nilpotent}, we proceed as follows.
First, in \S\ref{ss:nf}, we define a matrix $\Dual(\bm\lambda)$ which is similar
(over $\ZZ$) to $\Nil(\bm\lambda^*)$ so that $\zeta_{\Nil(\bm\lambda^*),\fO_K}(s) =
\zeta_{\Dual(\bm\lambda),\fO_K}(s)$.
As we will see in \S\ref{ss:recursion},
the advantage of $\Dual(\bm\lambda)$ over $\Nil(\bm\lambda^*)$ is that the sets
$V_K(\Dual(\bm\lambda))$ in Proposition~\ref{prop:coneint} exhibit a natural,
recursive structure.
Specifically, we will define $\der{\bm\lambda} := (\lambda_2,\dotsc,\lambda_{\len{\bm\lambda}})$
and find that $V_K(\Dual(\bm\lambda))$ can be described in terms of
$V_K(\Dual(\der{\bm\lambda}))$ and membership conditions for generic vectors
in generic sublattices. 
In \S\ref{ss:membership},
the geometry of such membership conditions is elucidated by means of suitable
(birational) changes of coordinates.
Finally, in \S\ref{ss:proof_nilpotent}, we combine all these ingredients and
prove Theorem~\ref{thm:nilpotent}.

%%%%%%%%%%%%%%%%%%%%%%%%%%%%%%%%%%%%%%%%%%%%%%%%%%%%%%%%%%%%%%%%%%%%%%%%
\subsection{A dual normal form for nilpotent matrices}
\label{ss:nf}
%%%%%%%%%%%%%%%%%%%%%%%%%%%%%%%%%%%%%%%%%%%%%%%%%%%%%%%%%%%%%%%%%%%%%%%%

\begin{defn*}
  Let $\bm\lambda = (\lambda_1,\dotsc,\lambda_r) \parts n \ge 0$.
  Define $\der{\bm\lambda} := (\lambda_2,\dotsc,\lambda_r)$.
  We recursively define $\Dual(\bm\lambda) \in \Mat_n(\ZZ)$ as follows:
  \begin{enumerate}
  \item If $r \le 1$, define $\Dual(\bm\lambda) = 0_n$.
  \item
    If $r > 1$,
    define
    \sbox0{$\begin{matrix}1&2\\4&5\end{matrix}$}
    \sbox1{$\begin{matrix}1\\2\end{matrix}$}
    \sbox2{$\begin{matrix} 1_{\lambda_2}  \\ 0_{\lambda_1-\lambda_2,\lambda_2} \end{matrix}$}
    \sbox3{$\begin{matrix} 1 & 2 & 3\\
        1 & 2 & 3 \\
        1 & 2 & 3 \end{matrix}$}
    \begin{equation}
      \label{eq:A}
    \Dual(\bm\lambda) =
    \left[
    \begin{array}{c|cc}
      \makebox[\wd0]{\large $0_{\lambda_1}$}
      & \usebox{2}  & 0_{\lambda_1,\lambda_3+\dotsb+\lambda_r}    \\\hline
      & 
      \vphantom{\usebox{3}}
      \makebox{\large $\Dual(\der{\bm\lambda})$}
    \end{array}
    \right].
    \end{equation}
  \end{enumerate}
\end{defn*}

In other words,

\begin{equation}
  \label{eq:A_alt}
  \Dual(\bm\lambda) = 
  \left[
\begin{array}{ccccc}
  \sbox0{$\begin{matrix}1&2\\4&5\\7&8\end{matrix}$}
  \vphantom{\usebox{0}} \makebox{$0_{\lambda_1}$} &
  \begin{array}{|c|} \hline \makebox{$1_{\lambda_2}$}\\ \hline \makebox{$0_{\lambda_1-\lambda_2,\lambda_2}$}\\\hline\end{array}
  \\
  & \vphantom{\usebox{0}} \makebox{$0_{\lambda_2}$} &
  \begin{array}{|c|} \hline \makebox{$1_{\lambda_3}$}\\ \hline \makebox{$0_{\lambda_2-\lambda_3,\lambda_3}$}\\\hline\end{array}
  \\
  & & \ddots & \ddots \\
  & &&\ddots &
  \begin{array}{|c|} \hline \makebox{$1_{\lambda_r}$}\\ \hline \makebox{$0_{\lambda_{r-1}-\lambda_r,\lambda_r}$}\\\hline\end{array}
  \\
  & & & & \vphantom{\usebox{0}}\makebox{$0_{\lambda_r}$}
\end{array}
\right]
\end{equation}

By the following, the $\Dual(\bm\lambda)$ parameterise similarity
classes of nilpotent matrices.

\begin{prop}
  \label{prop:permconj}
  $\Dual(\bm\lambda^*)$ and $\Nil(\bm\lambda)$ are conjugate by permutation matrices.
\end{prop}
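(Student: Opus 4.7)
The plan is to identify both matrices as different matrix realizations of the same ``move one box to the right'' endomorphism on the Young diagram of $\bm\lambda$, so that the conjugating permutation arises simply from the transition between row-major and column-major listings of boxes. Set $\bm\mu := \bm\lambda^*$, $r := \len{\bm\lambda}$ and $s := \len{\bm\mu} = \lambda_1$; note that $\mu_1 = r$.

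First I would unfold the recursive definition \eqref{eq:A} by induction on $s$ to establish the following explicit form: grouping the standard basis vectors into $f_{i,j}$ with $1 \le i \le s$ and $1 \le j \le \mu_i$ in accordance with the block structure,
\[
f_{i,j} \dtimes \Dual(\bm\mu) = \begin{cases} f_{i+1,j}, & i < s \text{ and } j \le \mu_{i+1},\\ 0, & \text{otherwise.} \end{cases}
\]
The base case $s \le 1$ is trivial (the matrix is zero); the inductive step reads off the first block row of \eqref{eq:A} and invokes the inductive hypothesis on the bottom-right block. Analogously, from $\Nil(\bm\lambda) = \diag(\CM(X^{\lambda_1}),\dotsc,\CM(X^{\lambda_r}))$ and the shape of $\CM(X^m)$, the standard basis vectors $e_{p,q}$ ($1 \le p \le r$, $1 \le q \le \lambda_p$) satisfy $e_{p,q} \dtimes \Nil(\bm\lambda) = e_{p,q+1}$ when $q < \lambda_p$, and $0$ otherwise.

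Finally I would identify both index sets with the boxes of the Young diagram of $\bm\lambda$: the pair $(p,q)$ corresponds to the box at row $p$ and column $q$ (which exists iff $q \le \lambda_p$), while $(i,j)$ corresponds to the box at column $i$ and row $j$ (which exists iff $j \le \mu_i$, equivalently iff $\lambda_j \ge i$). The ``transposition'' $e_{p,q} \leftrightarrow f_{q,p}$ is then a well-defined bijection of basis sets, and under it both actions become ``move to the right neighbor, or vanish if none exists'': the existence conditions $q+1 \le \lambda_p$ and $p \le \mu_{q+1}$ both say precisely that $(p,q+1)$ is a box of the diagram. Letting $P$ denote the permutation matrix implementing this bijection, we conclude $P^{-1} \dtimes \Dual(\bm\lambda^*) \dtimes P = \Nil(\bm\lambda)$. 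The only slightly delicate step is the inductive unfolding of \eqref{eq:A}, but this amounts to routine block-matrix bookkeeping and poses no serious obstacle.
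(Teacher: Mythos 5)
Your proposal is correct and follows essentially the same route as the paper: both arguments realise $\Nil(\bm\lambda)$ and $\Dual(\bm\lambda^*)$ as the matrices of the ``shift each box to its right neighbour'' endomorphism of the Young diagram of $\bm\lambda$ with respect to the row-major and column-major orderings of the cells, the latter identification being established by induction on the recursive definition \eqref{eq:A}. The only cosmetic difference is that you unfold $\Dual(\bm\mu)$ into an explicit action on indexed basis vectors and then match conditions via $\lambda_p \ge q+1 \Leftrightarrow p \le \mu_{q+1}$, whereas the paper phrases the induction directly in terms of the vertically ordered cells.
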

\begin{proof}
  Let $T(\bm\lambda)$ be the Young diagram of $\bm\lambda$ and
  let $V(\bm\lambda)$ be the $\ZZ$-module freely generated by the cells
  of $T$;
  we use ``English notation'' for $T(\bm\lambda)$ and draw each row 
  underneath its predecessor (if any).
  Define $\Theta(\bm\lambda)$ to be the endomorphism of $V(\bm\lambda)$ (acting on the
  right) which sends each cell to its right neighbour if it exists and to zero otherwise.

  We consider two orderings on the cells of $T(\bm\lambda)$ and describe the
  associated matrices representing $\Theta(\bm\lambda)$.
  The \itemph{horizontal order} is defined by traversing the cells of $T(\bm\lambda)$
  from left to right within each row, proceeding from top to bottom.
  Clearly, $\Nil(\bm\lambda)$ is the matrix of $\Theta(\bm\lambda)$ with respect to
  this order.

  The \itemph{vertical order} is obtained by traversing the cells of $T(\bm\lambda)$ 
  from top to bottom within each column, proceeding from left to right.
  Write $\bm\mu := \bm\lambda^*$, say $\bm\mu = (\mu_1,\dotsc,\mu_\ell)$.
  We now show by induction on $\ell$ that the matrix of $\Theta(\bm\lambda)$ with respect 
  to the vertical order is $\Dual(\bm\mu)$---it then follows, in particular,
  that $\Dual(\bm\mu)$ and $\Nil(\bm\lambda)$ are conjugate as claimed. 
  
  If $\ell \le 1$, then $\Theta(\bm\lambda) = 0$ and $\Dual(\bm\mu) = 0$ so let $\ell > 1$.
  Let $t_1,\dotsc,t_n$ be the cells of $T(\bm\lambda)$ according to the vertical order.
  Then $t_i \Theta(\bm\lambda) = t_{\mu_1+i}$ for $1\le i \le \mu_2$
  and $t_i \Theta(\bm\lambda) = 0$ for $\mu_2 < i \le \mu_1$.
  Let $\tilde{\bm\lambda} := (\der{\bm\mu})^*$
  and $\tilde V :=\ZZ t_{\mu_1+1} \oplus \dotsb \oplus \ZZ t_n$.
  We may naturally identify the endomorphism of $\tilde V$ induced by
  $\Theta(\bm\lambda)$ with $\Theta(\tilde{\bm\lambda})$ acting on $V(\tilde{\bm\lambda})$;
  the defining basis of $\tilde V$ is then ordered vertically.
  By induction, the matrix of $\Theta(\bm\lambda)$ acting on $\tilde V$
  with respect to the basis $(t_{\mu_1+1},\dotsc,t_n)$ is therefore
  $\Dual(\der{\bm\mu})$ whence the claim follows from the recursive description of
  $\Dual(\bm\mu)$ in \eqref{eq:A}.
\end{proof}

For $\abs{\bm\lambda} > 0$, let $\Tail(\bm\lambda) \in
\Mat_{\abs{\bm\lambda},\abs{\der{\bm\lambda}}}(\ZZ)$ denote the matrix obtained 
by deleting the first $\lambda_1$ columns of $\Dual(\bm\lambda)$.
The following consequence of \eqref{eq:A_alt} will be useful below.

\begin{lemma}
  \label{lem:B}
  $\Tail(\bm\lambda)$ contains precisely $\lambda_1$ zero rows and
  by deleting these, the $\abs{\der{\bm\lambda}}\times\abs{\der{\bm\lambda}}$ identity matrix is
  obtained. \qed
\end{lemma}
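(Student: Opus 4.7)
The cleanest route is a direct block-by-block reading of $\Tail(\bm\lambda)$ off of the explicit form~\eqref{eq:A_alt} of $\Dual(\bm\lambda)$; no induction is strictly needed. Write $\bm\lambda = (\lambda_1,\dotsc,\lambda_r) \parts n$ throughout.

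First, \eqref{eq:A_alt} exhibits $\Dual(\bm\lambda)$ as a block matrix whose row- and column-block sizes are both $\lambda_1,\lambda_2,\dotsc,\lambda_r$, with support only on the block super-diagonal: for $i = 1,\dotsc,r-1$, the $(i,i+1)$-block equals $E_i := \bigl[\begin{smallmatrix} 1_{\lambda_{i+1}} \\ 0_{\lambda_i-\lambda_{i+1},\lambda_{i+1}} \end{smallmatrix}\bigr]$, while every other block is zero. In particular, the first $\lambda_1$ columns of $\Dual(\bm\lambda)$ vanish identically, which is why the definition of $\Tail(\bm\lambda)$ discards exactly those columns.

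Next, deleting these $\lambda_1$ columns merely shifts the column-block indexing by one. The resulting matrix $\Tail(\bm\lambda)$ has row-block sizes $\lambda_1,\dotsc,\lambda_r$ and column-block sizes $\lambda_2,\dotsc,\lambda_r$, and is \emph{block-diagonal} with $i$-th diagonal block $E_i$ for $i = 1,\dotsc,r-1$; the terminal row-block (of size $\lambda_r$) is entirely zero.

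Finally, one counts zero rows. Each $E_i$ contributes $\lambda_{i+1}$ nonzero rows (those of $1_{\lambda_{i+1}}$) and $\lambda_i - \lambda_{i+1}$ zero rows, and the terminal block contributes $\lambda_r$ further zero rows. Telescoping,
\[
  \sum_{i=1}^{r-1}(\lambda_i - \lambda_{i+1}) + \lambda_r = \lambda_1,
\]
which proves the first assertion. Deleting the zero rows leaves a block-diagonal matrix with diagonal blocks $1_{\lambda_2},\dotsc,1_{\lambda_r}$, which is exactly $1_{\abs{\der\bm\lambda}}$. The only point requiring minor care is the degenerate range $r \le 1$, where $\der\bm\lambda$ is empty and both halves of the claim hold vacuously; apart from that, the argument is pure bookkeeping and I foresee no substantive obstacle.
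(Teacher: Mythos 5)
Your proof is correct and follows exactly the route the paper intends: the lemma is stated there as an immediate consequence of the explicit form \eqref{eq:A_alt}, and your block-by-block reading of $\Tail(\bm\lambda)$ (superdiagonal blocks $\bigl[\begin{smallmatrix}1_{\lambda_{i+1}}\\ 0_{\lambda_i-\lambda_{i+1},\lambda_{i+1}}\end{smallmatrix}\bigr]$, telescoping count of zero rows) just spells out that bookkeeping. No gaps; the degenerate case $r\le 1$ is handled appropriately.
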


%%%%%%%%%%%%%%%%%%%%%%%%%%%%%%%%%%%%%%%%%%%%%%%%%%%%%%%%%%%%%%%%%%%%%%%%
\subsection{Recursion}
\label{ss:recursion}
%%%%%%%%%%%%%%%%%%%%%%%%%%%%%%%%%%%%%%%%%%%%%%%%%%%%%%%%%%%%%%%%%%%%%%%%

In this subsection, we give a recursive description of $V_K(\Dual(\bm\lambda))$
(see Proposition~\ref{prop:coneint}).

\begin{lemma}
  \label{lem:rec}
  Let $\bm\lambda = (\lambda_1,\dotsc,\lambda_r) \parts n$
  and let $X$ be the generic upper triangular $n\times n$ matrix.
  Partition $X$ in the form
  \[
  X = \left[\begin{array}{c|c}
      \makebox{$\begin{matrix} X^{\mathrm{I}}_{\lambda_2} &
          * \\ 0_{\lambda_1-\lambda_2,\lambda_2} &
          X^{\mathrm{II}}_{\lambda_1-\lambda_2}\end{matrix}$}
      &  \makebox{$\bar X_{\lambda_1,\abs{\der{\bm\lambda}}}$}\\\hline
      0 & \makebox{$X^{\prime}_{\abs{\der{\bm\lambda}}}$}
    \end{array}\right],
  \]
  where subscripts are added to denote block sizes.
  Then
  \[
  X \Dual(\bm\lambda) =
  \left[
    \begin{array}{c|c}
      \makebox{$0_{\lambda_1}$} & 
      \begin{array}{c|c}
        \begin{array}{c}
          \makebox{$X^{\mathrm I}$} \\
          \makebox{$0$}
        \end{array} &
        \makebox{$\bar X \Tail(\der{\bm\lambda})$}
      \end{array}
      \\\hline
      0 & X' \Dual(\der{\bm\lambda})
    \end{array}
  \right].
  \]
\end{lemma}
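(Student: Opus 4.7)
The plan is to verify the identity by a direct block matrix multiplication, using the recursive description of $\Dual(\bm\lambda)$ from \eqref{eq:A} together with the compatibly-chosen partition of $X$. Coarsely, write $\Dual(\bm\lambda) = \left[\begin{smallmatrix} 0_{\lambda_1} & T \\ 0 & \Dual(\der{\bm\lambda}) \end{smallmatrix}\right]$ and $X = \left[\begin{smallmatrix} Y & \bar X \\ 0 & X' \end{smallmatrix}\right]$, where $Y$ denotes the upper-left $\lambda_1 \times \lambda_1$ block (further partitioned into $X^I$, $*$, and $X^{II}$) and $T$ denotes the $\lambda_1 \times \abs{\der{\bm\lambda}}$ block $\left[\begin{smallmatrix} 1_{\lambda_2} & 0 \\ 0 & 0 \end{smallmatrix}\right]$ from the top row of \eqref{eq:A}. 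Multiplying yields
\[
X \Dual(\bm\lambda) =
\begin{bmatrix} 0 & Y T + \bar X\, \Dual(\der{\bm\lambda}) \\ 0 & X'\, \Dual(\der{\bm\lambda}) \end{bmatrix},
\]
so the top-left and bottom rows match the claim immediately, and everything reduces to identifying the top-right block.

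For that block, I would compute the two summands separately. First, using the explicit form of $T$ and the partitioning $Y = \left[\begin{smallmatrix} X^I & * \\ 0 & X^{II} \end{smallmatrix}\right]$, one finds
\[
YT =
\left[\begin{array}{c|c}
\begin{array}{c} X^I \\ 0 \end{array} & 0_{\lambda_1,\, \lambda_3+\dotsb+\lambda_r}
\end{array}\right],
\]
which contributes exactly the left column-block $\left[\begin{smallmatrix} X^I \\ 0 \end{smallmatrix}\right]$ of the asserted answer. Second, the recursive definition \eqref{eq:A} of $\Dual(\der{\bm\lambda})$ shows that its first $\lambda_2$ columns vanish, while the remaining $\abs{\der{\der{\bm\lambda}}}$ columns form precisely $\Tail(\der{\bm\lambda})$ by definition of $\Tail$. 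Hence
\[
\bar X\, \Dual(\der{\bm\lambda}) = \bigl[\, 0_{\lambda_1,\lambda_2} \,\bigm|\, \bar X\, \Tail(\der{\bm\lambda})\,\bigr],
\]
which supplies the right column-block $\bar X\, \Tail(\der{\bm\lambda})$ and vanishes on the $\lambda_2$ columns already occupied by $YT$. Adding the two contributions gives the top-right block exactly as in the lemma.

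There is essentially no obstacle here beyond careful bookkeeping of block sizes: the only substantive ingredient is recognising the last $\abs{\der{\der{\bm\lambda}}}$ columns of $\Dual(\der{\bm\lambda})$ as $\Tail(\der{\bm\lambda})$, which is immediate from the definition of $\Tail$ applied to $\der{\bm\lambda}$. Accordingly, I would keep the write-up short, presenting it as a single boxed computation with the partitioning made explicit.
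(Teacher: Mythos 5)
Your computation is correct and is exactly the verification the paper leaves implicit: its proof of this lemma is the single line ``This follows easily from \eqref{eq:A}'', and your block multiplication, together with the observation that the first $\lambda_2$ columns of $\Dual(\der{\bm\lambda})$ vanish so that $\Dual(\der{\bm\lambda}) = \bigl[\,0 \mid \Tail(\der{\bm\lambda})\,\bigr]$, is precisely that omitted bookkeeping. No gaps; the write-up could be included essentially as is.
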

\begin{proof}
  This follows easily from \eqref{eq:A}.
\end{proof}

By Lemmas \ref{lem:B}--\ref{lem:rec},
the $\lambda_1\times \abs{\der{\bm\lambda}}$ submatrix obtained by considering the first $\lambda_1$ rows of
$X\Dual(\bm\lambda)$ and then deleting the first $\lambda_1$ columns is of the
form
\begin{equation}
  \label{eq:Xlambda}
X^{\bm\lambda} := 
\begin{bmatrix}
  x_{1,1} & \hdots & x_{1,\lambda_2} & * & \hdots & *\\
  & \ddots  & \vdots & \vdots & \ddots & \vdots\\
  & & x_{\lambda_2,\lambda_2} & * & \hdots & * \\
  & & & \vdots & \ddots & \vdots \\
  & & & * & \hdots & *
\end{bmatrix},
\end{equation}
where the entries marked ``$*$'' indicate unspecified but \itemph{distinct} variables taken from $\bar X$.

\begin{cor}
  \label{cor:rec}
  Let $\bm\lambda \parts n$ and let $K$ be a $p$-adic field.
  For $\bm x \in \Tr_n(K)$, define $\bm x'$ and $\bm x^{\bm\lambda}$ by specialising
  $X'$ and $X^{\bm\lambda}$ from Lemma~\ref{lem:rec} and \eqref{eq:Xlambda},
  respectively, at $\bm x$.
  Then
  \begin{align}
    \label{eq:Vrec}
    V_K(\Dual(\bm\lambda)) = \Bigl\{ \bm x \in \Tr^\reg_n(\fO_K) : 
    \text{(i) } & \text{each row of $\bm x^{\bm\lambda}$ belongs to
      $\fO_K^{\abs{\der{\bm\lambda}}} \bm x '$
      and } \nonumber \\ \text{ (ii) }&
    \bm x'  \in V_K(\Dual(\der{\bm\lambda}))
    \Bigr\}.
  \end{align}
\end{cor}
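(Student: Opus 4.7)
The plan is to unpack the defining condition $\fO_K^n \bm x \Dual(\bm\lambda) \subset \fO_K^n \bm x$ from Proposition~\ref{prop:coneint} using the explicit block decomposition of $\bm x \Dual(\bm\lambda)$ furnished by Lemma~\ref{lem:rec}, and then to exploit upper-triangularity of $\bm x$ together with regularity of its diagonal entries to split the resulting row-containment conditions cleanly into the two pieces (i) and (ii).

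Concretely, I would fix $\bm x \in \Tr_n^\reg(\fO_K)$ and partition it as in Lemma~\ref{lem:rec}. That lemma tells us that every row of $\bm x \Dual(\bm\lambda)$ has its first $\lambda_1$ coordinates equal to zero, that the nontrivial tails of the first $\lambda_1$ rows are precisely the rows of $\bm x^{\bm\lambda}$ as defined in \eqref{eq:Xlambda}, and that the nontrivial tails of the remaining $\abs{\der{\bm\lambda}}$ rows are the rows of $\bm x' \Dual(\der{\bm\lambda})$. Membership in $\fO_K^n \bm x$ for every such padded row is therefore what \eqref{eq:Vrec} must encode.

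The crux is the elementary triangular-solve observation: any vector $\bm y = \sum_{i=1}^n c_i \bm{x}_i \in \fO_K^n \bm x$ whose first $\lambda_1$ coordinates vanish automatically satisfies $c_1 = \dotsb = c_{\lambda_1} = 0$. Indeed, comparing first coordinates gives $c_1 x_{11} = 0$, and since $x_{11}$ is regular we get $c_1 = 0$; working through columns $2, \dotsc, \lambda_1$ in turn, regularity of each $x_{jj}$ forces $c_j = 0$. Consequently such a $\bm y$ lies in the $\fO_K$-span of the last $\abs{\der{\bm\lambda}}$ rows of $\bm x$, so its last $\abs{\der{\bm\lambda}}$ coordinates lie in $\fO_K^{\abs{\der{\bm\lambda}}} \bm x'$. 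Applying this observation to each of the first $\lambda_1$ rows of $\bm x \Dual(\bm\lambda)$ recovers condition (i); applying it to each of the remaining $\abs{\der{\bm\lambda}}$ rows gives that each row of $\bm x' \Dual(\der{\bm\lambda})$ lies in $\fO_K^{\abs{\der{\bm\lambda}}} \bm x'$, which is condition (ii), upon noting that $\bm x'$ inherits regular upper-triangularity as a diagonal block of $\bm x$ and so lies in $\Tr^\reg_{\abs{\der{\bm\lambda}}}(\fO_K)$. The converse direction is immediate: assuming (i) and (ii), every row of $\bm x \Dual(\bm\lambda)$ is obtained by prepending $\lambda_1$ zeros to an element of $\fO_K^{\abs{\der{\bm\lambda}}} \bm x'$, and is therefore manifestly an $\fO_K$-combination of the last $\abs{\der{\bm\lambda}}$ rows of $\bm x$.

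I do not anticipate a genuine obstacle; the argument is conceptually routine. The only care needed is in tracking two block partitions simultaneously, namely the horizontal split of $\bm x$ at column $\lambda_1$ and the further internal split at column $\lambda_2$ producing the blocks $X^{\mathrm{I}}, X^{\mathrm{II}}$ that feed into $\bm x^{\bm\lambda}$ via \eqref{eq:Xlambda}. Lemma~\ref{lem:B} is what makes the padding of $\bm x \Dual(\bm\lambda)$ by $\lambda_1$ leading zeros transparent, and without it the decoupling of (i) from (ii) would not be as clean.
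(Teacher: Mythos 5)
Your proof is correct and follows essentially the same route as the paper's: read off the block structure of $\bm x\,\Dual(\bm\lambda)$ from Lemma~\ref{lem:rec} and use regularity of the diagonal entries (the paper's ``$\det(\bm x)\neq 0$'') to force the coefficients of the first $\lambda_1$ rows of $\bm x$ to vanish, which is exactly your triangular-solve step, after which (i) and (ii) decouple as you describe. The only inessential slip is crediting Lemma~\ref{lem:B} with the $\lambda_1$ leading zero columns, which already follow from Lemma~\ref{lem:rec}; Lemma~\ref{lem:B} is what yields the specific shape \eqref{eq:Xlambda} of $X^{\bm\lambda}$.
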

\begin{proof}
  Let $\bm x \in \Tr_n^\reg(\fO_K)$.
  Clearly,
  $\bm x \in V_K(\Dual(\bm\lambda))$ if and only if every row of $\bm x
  \Dual(\bm\lambda)$ is contained in the $\fO_K$-span of the rows of $\bm x$.
  By Lemma~\ref{lem:rec} and since $\det(\bm x) \not= 0$, the first $\lambda_1$
  rows of $\bm x \Dual(\bm\lambda)$ satisfy this condition if and only if every row
  of $\bm x^{\bm\lambda}$ is contained in the $\fO_K$-span of the rows of $\bm x'$.
  Similarly, the rows numbered $\lambda_1 + 1,\dotsc,n$ of
  $\bm x \Dual(\bm\lambda)$ are contained in the $\fO_K$-span of $\bm x$ if and
  only if each row of $\bm x' \Dual(\der{\bm\lambda})$ is contained in the
  $\fO_K$-span of $\bm x'$ or, equivalently, if $\bm x' \in V_K(\Dual(\der{\bm\lambda}))$.
\end{proof}

%%%%%%%%%%%%%%%%%%%%%%%%%%%%%%%%%%%%%%%%%%%%%%%%%%%%%%%%%%%%%%%%%%%%%%%%
\subsection{Characterising submodule membership}
\label{ss:membership}
%%%%%%%%%%%%%%%%%%%%%%%%%%%%%%%%%%%%%%%%%%%%%%%%%%%%%%%%%%%%%%%%%%%%%%%%

Condition (i) in \eqref{eq:Vrec} leads us to investigate pairs $(\xx,\yy) \in R^n
\times \Tr_n(R)$ (where $R$ is a ring) such that $\bm x$ is contained in the row
span of $\yy$ over $R$.
In this subsection, we study the set of all such pairs $(\xx,\yy)$ 
in the case that $R = \fO_K$ for a $p$-adic field $K$.

We write $\AA^n = \Spec(\ZZ[X_1,\dotsc,X_n])$ and $\Tr_n = \Spec(\ZZ[Y_{ij} :
1\le i \le j \le n])$.
Let
\begin{equation}
  \label{eq:EnR}
  E_n(R) := \bigl\{
  (\xx, \yy) \in R^n \times \Tr_n(R) : \xx \in R^n \yy \bigr\}.
\end{equation}
We identify $\AA^n\times\Tr_n = \Spec(\ZZ[X_1,\dotsc,X_n,
Y_{11},\dotsc,$ $Y_{1n},Y_{22}, \dotsc, Y_{nn}])$.
Define
\begin{equation}
  \label{eq:Cn}
  \cC_n := 
  \bigl\{ (\alpha,\omega) \in \Orth^n \times \Tr_n(\Orth) : 
  \omega_{ii} \le \alpha_i \text{ for } 1 \le i \le n 
  \bigr\}.
\end{equation}

For a $p$-adic field $K$,
we extend $\nu_K$ to families of elements of $K$ via $\nu_K(a_1,\dotsc,a_m) = (\nu_K(a_1),\dotsc,\nu_K(a_m))$
and write
\[
\cC_n(K) := \Bigl\{ (\xx,\yy) \in K^n \times \Tr_n(K) : (\nu_K(\xx),\nu_K(\yy))
\in \cC_n\Bigr\}
\subset \fO_K^n \times \Tr_n^\reg(\fO_K).
\]
The following lemma
will play a key role in our proof of Theorem~\ref{thm:nilpotent}.
It shows that away from sets of measure zero, a suitable $\ZZ$-defined change of
coordinates (defined independently of $K$) transforms $E_n(\fO_K)$ into $\cC_n(K)$.

\begin{lemma}
  \label{lem:coc}
  There exist
  \begin{itemize}
  \item closed subschemes $V_n,V_n' \subset \AA^n \times \Tr_n$ of the
    form $f_n = 0$ and $f_n'= 0 $, respectively, where $f_n,f_n' \in \ZZ[\bm X,
    \bm Y]$ are non-zero non-units, and
  \item an isomorphism
    $\varphi_n \colon (\AA^n\times\Tr_n)\setminus V_n \to
    (\AA^n\times\Tr_n)\setminus V_n'$
  \end{itemize}
  such that the following conditions are satisfied:
  \begin{enumerate}
  \item
    \label{lem:coc1}
    For each $p$-adic field $K$, 
    $\varphi_n^K( E_n(\fO_K) \setminus V_n(\fO_K)) = \cC_n(K) \setminus
    V_n'(\fO_K)$,
    where $\varphi_n^K$ denotes the map induced by $\varphi_n$ on $K$-points.
  \item
    \label{lem:coc2}
    The Jacobian determinant of $\varphi_n$ is identically $1$.
  \item
    \label{lem:coc3}
    $\varphi_n$ commutes with (the restriction to its domain of) the projection of $\AA^n \times \Tr_n$ onto
    $\Tr_n$ and (the restriction of) the projection onto the first coordinate of $\AA^n$.
  \end{enumerate}
\end{lemma}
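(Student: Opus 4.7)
The plan is to build $\varphi_n$ directly from the Gaussian back-substitution that solves the triangular system $\xx = \bm z\,\yy$ for $\bm z$. I would take $V_n = V_n'$ to be the hypersurface cut out by $f_n = f_n' := Y_{11} Y_{22}\dotsb Y_{nn}$, which is manifestly a non-zero non-unit of $\ZZ[\bm X,\bm Y]$, since outside this locus $\yy$ is invertible over the function field. On the complement of $V_n$, I define $\varphi_n(X_1,\dotsc,X_n,\bm Y) := (X_1',\dotsc,X_n',\bm Y)$ by $X_1' := X_1$ and recursively
\[
X_j' := X_j - \sum_{i<j} \frac{X_i'}{Y_{ii}}\, Y_{ij}\qquad (j = 2,\dotsc,n),
\]
so that formally $z_j = X_j'/Y_{jj}$. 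The explicit inverse $X_j = X_j' + \sum_{i<j}(X_i'/Y_{ii})Y_{ij}$ is defined off the same locus, so $\varphi_n$ is an isomorphism as required.

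Conditions (ii) and (iii) are then immediate. The $\bm Y$-coordinates and $X_1$ are fixed by construction, giving (iii). For (ii), each $X_j'$ is of the form $X_j$ plus a function of $X_1,\dotsc,X_{j-1}$ (and $\bm Y$), so the Jacobian matrix of $\varphi_n$ with respect to the $X$-coordinates is lower unitriangular, hence of determinant $1$.

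The substantive content is (i). Fix a $p$-adic field $K$ and $(\xx, \yy)$ off $V_n(\fO_K)$, so $\yy$ is invertible in $\Mat_n(K)$. The triangularity of $\yy$ yields a unique $\bm z \in K^n$ with $\xx = \bm z\,\yy$, and back-substitution shows that $z_j = x_j'/y_{jj}$. Hence $\xx \in \fO_K^n\,\yy$ if and only if each $z_j$ is integral, equivalently $\nu_K(y_{jj}) \le \nu_K(x_j')$ for all $j$. To match this with membership in $\cC_n(K)$ I still need to verify the integrality of the \emph{other} coordinates on each side: starting from $(\xx,\yy) \in E_n(\fO_K)$, the integrality of $\xx$ and $\yy$ together with $\bm z \in \fO_K^n$ gives $x_j' = x_j - \sum_{i<j} z_i y_{ij} \in \fO_K$, and conversely starting from $(\xx',\yy) \in \cC_n(K)$, the inequalities $\nu_K(x_i') \ge \nu_K(y_{ii})$ yield $z_i \in \fO_K$, whence $x_j = x_j' + \sum_{i<j} z_i y_{ij} \in \fO_K$. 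The one genuine obstacle in the whole argument is this bookkeeping of integrality across the change of coordinates; everything else is formal.
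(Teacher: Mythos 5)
Your construction is essentially the paper's: the map you write down is exactly the Gaussian back-substitution that the paper performs (there organized as an induction on $n$, eliminating the first row and invoking the inductive hypothesis on the lower-right block, which unrolls to your recursion $X_j' = X_j - \sum_{i<j}(X_i'/Y_{ii})Y_{ij}$), and your verifications of (ii), (iii) and of the integrality bookkeeping in both directions are correct and match the paper's.

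The one genuine defect is your choice $V_n = V_n' = \{Y_{11}\dotsb Y_{nn} = 0\}$: with it, the asserted \emph{equality} in (\ref{lem:coc1}) fails. Membership in $\cC_n(K)$ requires $(\nu_K(\xx),\nu_K(\yy)) \in \Orth^n \times \Tr_n(\Orth)$, so every coordinate of $\xx$ and every entry $y_{ij}$ ($i\le j$) must have \emph{finite} non-negative valuation, i.e.\ be a non-zero integer. Thus, for example, $(\bm 0, 1_n)$ lies in $E_n(\fO_K)\setminus V_n(\fO_K)$ and is fixed by $\varphi_n$, yet its image is neither in $\cC_n(K)$ nor in $V_n'(\fO_K)$; the same problem occurs whenever some $x_j'$ or some off-diagonal $y_{ij}$ vanishes. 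This is why the paper already takes $f_1 = X_1Y_{11}$ (not just $Y_{11}$) and, in the inductive step, excludes further hypersurfaces so that ``the product of all $x_i''$ and $y_{ij}$ is non-zero''. The repair is routine: take $f_n'$ to be the product of all coordinate functions $\prod_i X_i \prod_{i\le j} Y_{ij}$ on the target, and let $f_n$ be the pullback of $f_n'$ under $\varphi_n$ with denominators cleared by a suitable power of $Y_{11}\dotsb Y_{nn}$ (times that product itself); then $\varphi_n$ restricts to an isomorphism of the complements and the set equality in (\ref{lem:coc1}) holds on the nose. Since only measure-zero loci are involved, none of this affects the application in Proposition~\ref{prop:make_monomial}, but the lemma as stated claims an exact equality of sets, so the enlarged hypersurfaces are needed.
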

\begin{ex*}[$n=2$]
  Let $K$ be a $p$-adic field; we drop the subscripts ``$K$'' in the following.
  Let $x,y,a,b,c \in \fO$ and suppose that
  $x (ay - bx) a b c\not= 0$.
  Define $y' := y - \frac x a b \in K$ and note that $y' \not= 0$.
  Then $(x,y) \in \fO^2 \dtimes \bigl[\begin{smallmatrix} a & b \\ 0 &
    c\end{smallmatrix}\bigr]$ if and only if $\nu(a) \le \nu(x)$ and
  $(x,y) - \frac x a (a,b) = (0,y') \in \fO (0,c)$; the latter
  condition is equivalent to $\nu(c) \le \nu(y')$ and implies that $y' \in \fO$.
  We see that the map $((x,y),\bigl[\begin{smallmatrix} a & b\\0 & c\end{smallmatrix}\bigr])
  \mapsto ((x,y'),\bigl[\begin{smallmatrix} a & b\\0 &
    c\end{smallmatrix}\bigr])$
  has the properties of $\varphi_2$ stated in Lemma~\ref{lem:coc}.
\end{ex*}
\begin{proof}[Proof of Lemma~\ref{lem:coc}]
  We proceed by induction.
  For $n = 1$, we let $f_1 = f_1' = X_1 Y_{11}$ and define $\varphi_1$ to be the identity.
  Clearly, (\ref{lem:coc1})--(\ref{lem:coc3}) are satisfied.

  Let $n > 1$ and suppose that $\varphi_{n-1}$ with the stated properties has
  been defined.
  Let~$K$ be a $p$-adic field
  and let $(\xx,\yy) \in K^n\times \Tr_n(K)$ with $x_1 y_{11} \not= 0$.
  We again drop the subscripts ``$K$''.
  Gaussian elimination shows that $(\xx,\yy) \in E_n(\fO)$ if and only if the
  following conditions are satisfied:
  \begin{enumerate}
  \item[(a)]
    $x_i, y_{ij} \in \fO$  for $1\le i \le j \le n$,
  \item[(b)]
    $\frac{x_1}{y_{11}} \in \fO$, and
  \item[(c)]
    $\bigl(x_2 - \frac{x_1}{y_{11}} y_{12}, \dotsc, x_n - \frac{x_1}{y_{11}}
    y_{1n}\bigr) \in \fO^{n-1} \dtimes \Bigl[ y_{ij} \Bigr]_{2\le i \le j \le
      n}$.
  \end{enumerate}

  We will now simplify (c) using a change of coordinates.
  For $2 \le j \le n$, let $x_j' := x_j - \frac{x_1}{y_{11}} y_{1j}$.
  Write $x_1' := x_1$ and $\xx' := (x_1',\dotsc,x_n')$.
  Note that $(\xx,\yy) \mapsto (\xx',\yy)$ is an
  automorphism of the complement of $Y_{11} = 0$ in $\AA^n \times \Tr_n$
  and that the Jacobian determinant of this map is identically $1$.

  Assuming that $y_{ij} \in \fO$ for $1\le i \le j \le n$ 
  and $\frac{x_1}{y_{11}} \in \fO$, we see that $x_j \in \fO$ if and only if
  $x_j' \in \fO$.
  Hence, $(\xx,\yy) \in E_n(\fO)$ if and only if (b) and the following two
  conditions are satisfied:
  \begin{enumerate}
    \item[(a')] $x_i', y_{ij} \in \fO$  for $1\le i \le j \le n$,
    \item[(c')] $(x_2', \dotsc, x_n') \in \fO^{n-1}  \dtimes \Bigl[ y_{ij}
      \Bigr]_{2\le i \le j \le n}$.
  \end{enumerate}
  After excluding suitable hypersurfaces,
  our inductive hypothesis allows us to perform another change of coordinates,
  replacing $x_2', \dotsc,x_n'$ by $x_2'', \dotsc,x_n''$, say, such that
  $(\xx,\yy) \in E_n(K)$ if and only if the following conditions are
  satisfied:
  \begin{enumerate}
    \item[(a'')] $x_i'', y_{ij} \in \fO$  for $1\le i \le j \le n$ (where
      $x_1'' := x_1' = x_1$) and
    \item[(c'')] $\nu(y_{ii}) \le \nu(x_i'')$ for $1 \le i \le n$;
  \end{enumerate}
  note that (b) is implied by the case $i = 1$ of (c'').

  For (\ref{lem:coc1}), assuming that the product of all $x_i''$ and $y_{ij}$ is
  non-zero, conditions (a'') and (c'') are both satisfied if and only
  if $(\xx'',\yy) \in \cC_n(K)$, where $\xx'' := (x_1'',\dotsc,
  x_n'')$.
  The change of coordinates $\bm x \mapsto \bm x''$ is defined over $\ZZ$,
  does not depend on $K$, and, does not modify the $x_1$- or $\bm
  y$-coordinate, as required for (\ref{lem:coc3});
  part (\ref{lem:coc2}) follows since $\varphi_n$ is defined as a composite of maps,
  the Jacobian determinant of each of which is identically $1$.
\end{proof}

\begin{rem}
  \label{rem:jacobian}
  It follows from Lemma~\ref{lem:coc}(\ref{lem:coc2})
  that the change of variables afforded by $\varphi_n$ does not affect $p$-adic measures.
  Moreover, it is well-known that if $0\not= f \in \fO_K[X_1,\dotsc,X_n]$, then
  the zero locus of $f$ in $\fO_K^n$ has measure zero.
  We conclude that $V_n$ and $V_n'$ in Lemma~\ref{lem:coc} are without relevance
  for the computation of the integral in Proposition~\ref{prop:coneint}.
\end{rem}

%%%%%%%%%%%%%%%%%%%%%%%%%%%%%%%%%%%%%%%%%%%%%%%%%%%%%%%%%%%%%%%%%%%%%%%%
\subsection{Final steps towards Theorem~\ref{thm:nilpotent}}
\label{ss:proof_nilpotent}
%%%%%%%%%%%%%%%%%%%%%%%%%%%%%%%%%%%%%%%%%%%%%%%%%%%%%%%%%%%%%%%%%%%%%%%%

By combining Corollary~\ref{cor:rec} and Lemma~\ref{lem:coc}, we may reduce
the computation of the integral in Proposition~\ref{prop:coneint} for $A =
\Dual(\bm\lambda)$ to a purely combinatorial problem.

\begin{prop}
  \label{prop:make_monomial}
  Let $\bm\lambda = (\lambda_1,\dotsc,\lambda_r) \parts n$ and let $K$ be a
  $p$-adic field.
  Then
  \begin{equation}
    \label{eq:int_linear}
    \zeta_{\Dual(\bm\lambda),\fO_K}(s) = (1-q_K^{-1})^{-n} \int_{V_{\bm\lambda}(\fO_K)} \prod_{i=1}^n
    \abs{x_i}_K^{s-i} \dd\mu(\xx),
  \end{equation}
  where
  $V_{\bm\lambda}(\fO_K)$ consists of those $\xx \in \fO_K^{n(n+1)/2}$ satisfying the
  following divisibility conditions, where the $y_{i,j,\ell}$ below denote \underline{distinct}
  variables among the $x_{n+1},\dotsc,x_{n(n+1)/2}$:
  \begin{itemize}
  \item
    For $2 \le i \le r$ and $1 \le j \le \lambda_{i}$,
    \[
   {x_{\us_{i-1}(\bm\lambda) + j}}\mathrel{\Big|}{x_{\us_{i-2}(\bm\lambda)+j}, y_{i,j,1},\dotsc,y_{i,j,j-1}}.
    \]
  \item
    For $3 \le i \le r$ and $\us_{i-1}(\bm\lambda) < j \le n$,
    \[
    {x_j}\mathrel{\Big|}{y_{i,j,n+1},\dotsc,y_{i,j,n+\lambda_{i-2}}}.
    \]
  \end{itemize}
\end{prop}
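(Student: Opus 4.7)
The plan is to combine Proposition~\ref{prop:coneint} with Corollary~\ref{cor:rec} and then invoke Lemma~\ref{lem:coc} row-by-row on $\bm x^{\bm\lambda}$, so as to convert the membership conditions cutting out $V_K(\Dual(\bm\lambda))$ into the explicit divisibility conditions cutting out $V_{\bm\lambda}(\fO_K)$. I would proceed by induction on $r = \len{\bm\lambda}$. The base case $r \le 1$ is immediate: then $\Dual(\bm\lambda) = 0_n$, so $V_K(\Dual(\bm\lambda)) = \Tr_n^{\reg}(\fO_K)$ and no divisibility conditions arise.

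For the inductive step, Corollary~\ref{cor:rec} splits the defining conditions of $V_K(\Dual(\bm\lambda))$ into (i) membership of each row of $\bm x^{\bm\lambda}$ in $\fO_K^{\abs{\der{\bm\lambda}}} \bm x'$ and (ii) $\bm x' \in V_K(\Dual(\der{\bm\lambda}))$. Applying the inductive hypothesis to (ii) yields divisibility conditions on the entries of $\bm x'$; since $\us_k(\bm\lambda) = \lambda_1 + \us_{k-1}(\der{\bm\lambda})$ for $k \ge 1$, reindexing translates them to precisely the conditions of the statement with $i \ge 3$ (in the first family) and $i \ge 4$ (in the second).

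For (i), I would handle each row separately. By \eqref{eq:Xlambda}, row $j$ of $\bm x^{\bm\lambda}$ has $j-1$ leading zeros when $1 \le j \le \lambda_2$ (with first non-zero entry $x_{j,j}$, followed by $x_{j,j+1},\dotsc,x_{j,\lambda_2}$ and then pairwise distinct off-diagonal $*$-entries from $\bar X$) and $\lambda_2$ leading zeros when $\lambda_2 < j \le \lambda_1$ (followed only by distinct $*$-entries). Applying Lemma~\ref{lem:coc} to the non-zero tail of each row together with the corresponding lower-right principal submatrix of $\bm x'$ turns the membership condition into a chain of divisibilities of the form $x_{\lambda_1+j+\ell} \mid (\text{transformed entry})$ for $\ell = 0, 1, \dotsc$. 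Sorting these by divisor: the staircase rows ($j \le \lambda_2$) give exactly the $i=2$ part of the first family (each $x_{\lambda_1+j}$ divides $x_j$ via row $j$ and divides one transformed off-diagonal entry via each of rows $1,\dotsc,j-1$, for $j$ divisibilities total), while the flat rows ($j > \lambda_2$) give exactly the $i=3$ part of the second family (each $x_p$ with $p > \us_2(\bm\lambda)$ divides one transformed entry per flat row, giving $\lambda_1$ divisibilities).

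Parts (\ref{lem:coc2})--(\ref{lem:coc3}) of Lemma~\ref{lem:coc} guarantee that every such change of coordinates preserves the Haar measure, fixes $\bm x'$, and fixes the leading non-zero entry of each row, so in particular all diagonal entries $x_1,\dotsc,x_n$ of $\bm x$ are untouched and the integrand $\prod_i \abs{x_i}^{s-i}$ is unchanged. Since the rows of $\bm x^{\bm\lambda}$ involve disjoint sets of off-diagonal variables of $\bm x$ (they lie in different rows of the original matrix), the row-wise changes of coordinates compose without interference, and the excluded hypersurfaces from Lemma~\ref{lem:coc}, being of measure zero by Remark~\ref{rem:jacobian}, may simply be discarded. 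The main obstacle is the combinatorial bookkeeping: one must identify which transformed entry is divided by which diagonal of $\bm x'$ in each step and, after reindexing from $\der{\bm\lambda}$ to $\bm\lambda$, verify that the resulting $y_{i,j,\ell}$ are indeed pairwise distinct off-diagonal variables among $x_{n+1},\dotsc,x_{n(n+1)/2}$.
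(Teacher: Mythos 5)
Your proposal follows the paper's own proof: start from Proposition~\ref{prop:coneint}, use Corollary~\ref{cor:rec} to split the defining conditions of $V_K(\Dual(\bm\lambda))$, convert the membership conditions (i) into divisibility conditions via Lemma~\ref{lem:coc}, and recurse on $\bm x' \in V_K(\Dual(\der{\bm\lambda}))$, with parts (\ref{lem:coc2})--(\ref{lem:coc3}) of Lemma~\ref{lem:coc} guaranteeing that the measure, the diagonal variables, and hence the integrand and previously produced conditions are untouched; this is exactly the argument in \S\ref{ss:proof_nilpotent}, and your treatment of the recursion, the disjointness of row variables, and the discarded hypersurfaces is sound. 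One piece of your bookkeeping should be corrected, though it does not affect the totals you state: applying Lemma~\ref{lem:coc} to a staircase row $j \le \lambda_2$ (after dropping its $j-1$ leading zeros) yields divisibility conditions at \emph{every} diagonal position $j, j+1, \dotsc, n-\lambda_1$ of $\bm x'$, not only at the staircase positions. Hence the staircase rows do not give ``exactly'' the $i=2$ conditions; they also contribute to the $i=3$ family, and for a fixed divisor $x_p$ with $p > \us_2(\bm\lambda)$ the $\lambda_1$ conditions arise from all $\lambda_1$ rows ($\lambda_2$ from staircase rows and $\lambda_1-\lambda_2$ from flat rows), whereas ``one transformed entry per flat row'' would account for only $\lambda_1-\lambda_2$ of them, since there are $\lambda_1-\lambda_2$ flat rows. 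With this attribution repaired, the identification with the $i=2$ first-family and $i=3$ second-family conditions is exactly as in the paper, and the rest of your argument goes through unchanged.
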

\begin{rem*}
  Since the $y_{i,j,\ell}$ do not appear in the integrand in the right-hand side
  of \eqref{eq:int_linear}, it is of no consequence precisely which of the
  $x_{n+1},\dotsc,x_{n(n+1)/2}$ each $y_{i,j,\ell}$ refers to provided that
  distinct triples $(i,j,\ell)$ yield different $y_{i,j,\ell}$. 
\end{rem*}
\begin{proof}[Proof of Proposition~\ref{prop:make_monomial}]
  If $r \le 1$, the claim is trivially true so let $r \ge 2$.

  As our first step, we combine Corollary~\ref{cor:rec} and Lemma~\ref{lem:coc} in order to
  transform the membership condition (i) in \eqref{eq:Vrec} into the given
  divisibility conditions for $i = 2$ and $i =3$, respectively;
  here, $x_1,\dotsc,x_n$ correspond to the diagonal entries
  $x_{11},\dotsc,x_{nn}$ in Proposition~\ref{prop:coneint}.
  This transformation does not affect the integrand in \eqref{eq:GSS} thanks
  to condition (\ref{lem:coc3}) in Lemma~\ref{lem:coc}.

  Subsequent steps then recursively apply the same procedure
  in order to express the condition $\bm x' \in V_K(\Dual(\der{\bm\lambda}))$ in
  Corollary~\ref{cor:rec} in terms of the stated divisibility conditions,
  taking into account the evident shifts of variable indices.
  Crucially, in doing so, none of the diagonal coordinates $x_1,\dotsc,x_n$ will ever be
  modified, again thanks to condition (\ref{lem:coc3}) in Lemma~\ref{lem:coc}.
  Therefore, the divisibility conditions obtained during earlier steps will
  never be altered by subsequent ones.
  The claim thus follows by induction.
\end{proof}

\begin{proof}[Proof of Theorem~\ref{thm:nilpotent}]
  We once again omit subscripts ``$K$'' in the following.
  Moreover, we will make repeated use of the identity
  \begin{equation}
    \label{eq:x_mid_y}
    \int\limits_{\{ (x,y) \in \fO^2 : \divides x y\}} \abs{x}^r \abs{y}^s
    \dd\mu(x,y) = \int\limits_{\fO^2}\abs{x}^{r+s+1} \abs{y}^s \dd\mu(x,y)
  \end{equation}
  which follows by performing a change of variables $y = xy'$ on the
  left-hand side.
  We will furthermore use the well-known identity $\int_{\fO}\abs{x}^s\dd\mu(x)
  = (1-q^{-1})/(1-q^{-s-1})$.

  By repeatedly applying \eqref{eq:x_mid_y}, we can eliminate all the
  $y_{i,j,\ell}$ variables and rewrite~\eqref{eq:int_linear} as an integral over
  $\fO^n$.
  In order to record the effect of this procedure on the integrand, we use
  $\bm\lambda$ to index $x_1,\dotsc,x_n$ as follows.
  Let $f(i,j) := \us_{i-1}(\bm\lambda) + j$ and, for $\xx = (x_1,\dotsc,x_n)$, write
  $x_{ij} := x_{f(i,j)}$. 
  Define
  \[
  U_{\bm\lambda}(\fO) := \Bigl\{ \xx \in \fO^n : \divides{x_{i,j}}{x_{i-1,j}}
  \text{ for } 2 \le i \le r \text{ and } 1 \le j \le \lambda_i
  \Bigr\}
  \]
  Proposition~\ref{prop:permconj} and repeated applications of \eqref{eq:x_mid_y} to \eqref{eq:int_linear}
  show that
  $$
  \zeta_{\Nil(\bm\lambda^*),\fO}(s) =
  \zeta_{\Dual(\bm\lambda),\fO}(s) = (1-q^{-1})^{-n}\int\limits_{U_{\bm\lambda}(\fO)}
  F_{\bm\lambda}(\xx)\dd\mu(\xx),$$
  where
  \begin{align*}
    F_{\bm\lambda}(\xx) & = \prod_{i=1}^r \prod_{j=1}^{\lambda_i} \abs[\big]{x_{ij}}^{s -
      f(i,j)}
    \times
    \prod_{i=2}^r \prod_{j=1}^{\lambda_i} \abs[\big]{x_{ij}}^{j-1}
    \times
    \prod_{a=3}^r \prod_{i=a}^r \prod_{j=1}^{\lambda_i}
    \abs[\big]{x_{ij}}^{\lambda_{a-2}}
    \\
    & =
    \prod_{j=1}^{\lambda_1} \abs[\big]{x_{1j}}^{s-j} \times
    \prod_{i=2}^r \prod_{j=1}^{\lambda_i} \abs[\big]{x_{ij}}^{s - (\lambda_{i-1}+1)};
  \end{align*}
  the second equality follows since
  $s - f(i,j) + j - 1 + \sum_{a=3}^i \lambda_{a-2} = s - (\lambda_{i-1} + 1)$
  for $2 \le i \le r$ and $1 \le j \le \lambda_i$.
  Another sequence of applications of \eqref{eq:x_mid_y} can be used to remove
  the divisibility conditions in $U_{\bm\lambda}(\fO)$, yielding
  \pushQED{\qed}
  \begin{align*}
    (1-q^{-1})^n
  \zeta_{\Dual(\bm\lambda),\fO}(s) & =
  \int_{\fO^n} \prod_{j=1}^{\lambda_1} \abs[\big]{x_{1j}}^{s-j} \times
  \prod_{i=2}^r \prod_{j=1}^{\lambda_i} \abs[\big]{x_{ij}}^{s-j + i - 1 +
    \sum\limits_{a=1}^{i-1}(s - (\lambda_a+1))} \dd\mu(\xx)\\
  & =
  \int_{\fO^n} \prod_{i=1}^r \prod_{j=1}^{\lambda_i} \abs[\big]{x_{ij}}^{is
    - (\us_{i-1}(\bm\lambda) + j)} \dd\mu(\xx)\\
  & = (1-q^{-1})^n \dtimes \prod_{i=1}^r \prod_{j=1}^{\lambda_i} \Bigl(1-q^{-is +
    \us_{i-1}(\bm\lambda) + j - 1}\Bigr)^{-1} \\
  &= (1-q^{-1})^n \dtimes W_{\bm\lambda}(q,q^{-s}). \qedhere \popQED
  \end{align*}
\end{proof}

%%%%%%%%%%%%%%%%%%%%%%%%%%%%%%%%%%%%%%%%%%%%%%%%%%%%%%%%%%%%%%%%%%%%%%%%
\section{Proofs of Theorems~\ref{Thm:global}--\ref{Thm:poles}}
\label{s:proofs}
%%%%%%%%%%%%%%%%%%%%%%%%%%%%%%%%%%%%%%%%%%%%%%%%%%%%%%%%%%%%%%%%%%%%%%%%

At the heart of our proofs of Theorems~\ref{Thm:global}--\ref{Thm:poles} lies
the following local version of Theorem~\ref{Thm:global}.

\begin{thm}
  \label{thm:local}
  Let $S\subset \Places_k$ be finite and $A \in \Mat_n(\fo_S)$.
  Let $((f_1,\bm\lambda_1), \dotsc, (f_e,\bm\lambda_e))$ be an elementary divisor
  vector of $A$ over $k$.
  Write $k_i = k[X]/(f_i)$.
  Let $\fo_i$ denote the ring of integers of $k_i$.
  Then for almost all $v \in \Places_k$,
  \begin{equation}
    \label{eq:local}
    \zeta_{A,\fo_v}(s) = \prod_{i=1}^e\prod_{j=1}^{\abs{\bm\lambda_i}}
    \prod_{\substack{w \in \Places_{k_i}\\\divides{w} v}}
    \zeta_{\fo_{i,w}}\bigl( (\bm\lambda_i^*)^{-1}(j) \dtimes s - j + 1\bigr).
  \end{equation}
\end{thm}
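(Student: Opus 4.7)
The plan is to chain together Proposition~\ref{prop:primary}, Theorem~\ref{thm:rednil}, and Theorem~\ref{thm:nilpotent} in that order, and then perform an elementary combinatorial identification of the resulting product of $W$-factors with a product of shifted Dedekind zeta function values. Apart from bookkeeping of the finite sets of excluded places, no new ideas are needed beyond what is already established in \S\ref{s:redprimary}--\S\ref{s:nilpotent}.

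First, I would apply Proposition~\ref{prop:primary} to the factorisation $f = f_1^{\abs{\bm\lambda_1}} \dotsb f_e^{\abs{\bm\lambda_e}}$ of the minimal polynomial of $A$ over $k$; the $f_i$ are pairwise coprime as they are distinct monic irreducibles. After excluding the finitely many bad places permitted by Proposition~\ref{prop:primary}, this yields
\[
\zeta_{A,\fo_v}(s) = \prod_{i=1}^e \zeta_{A_i,\fo_v}(s),
\]
where $A_i$ is the matrix of $A$ restricted to $\Ker(f_i^{\abs{\bm\lambda_i}}(A))$, hence an $f_i$-primary matrix of type $\bm\lambda_i$.

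Next, for each $i$ individually, Theorem~\ref{thm:rednil} (applied away from a further finite set of places) gives
\[
\zeta_{A_i,\fo_v}(s) = \prod_{\substack{w \in \Places_{k_i}\\ \divides w v}} \zeta_{\Nil(\bm\lambda_i),\fo_{i,w}}(s).
\]
To each inner factor I would apply Theorem~\ref{thm:nilpotent} with $\bm\lambda_i^*$ substituted in place of $\bm\lambda$---using the involution $(\bm\lambda_i^*)^* = \bm\lambda_i$ to identify $\Nil(\bm\lambda_i)$ with $\Nil((\bm\lambda_i^*)^*)$---to obtain $\zeta_{\Nil(\bm\lambda_i),\fo_{i,w}}(s) = W_{\bm\lambda_i^*}(q_w, q_w^{-s})$.

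It then remains to unfold the definition of $W_{\bm\lambda_i^*}$. Since the Dedekind zeta function of the local field $k_{i,w}$ satisfies $\zeta_{\fo_{i,w}}(t) = (1 - q_w^{-t})^{-1}$, a direct term-by-term comparison gives
\[
W_{\bm\lambda_i^*}(q_w, q_w^{-s}) = \prod_{j=1}^{\abs{\bm\lambda_i}} \frac{1}{1 - q_w^{j - 1 - \ind{(\bm\lambda_i^*)}{j} \dtimes s}} = \prod_{j=1}^{\abs{\bm\lambda_i}} \zeta_{\fo_{i,w}}\bigl( \ind{(\bm\lambda_i^*)}{j} \dtimes s - j + 1\bigr),
\]
matching the factor structure on the right-hand side of~\eqref{eq:local}. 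Collecting these identities over $i$ and $w \mid v$, and noting that the union of the finite sets of excluded places arising in the previous steps is itself finite, establishes \eqref{eq:local} for almost all $v \in \Places_k$. The only conceivable hurdle is the verification of the final combinatorial identity, but it is immediate from the definition of $W_{\bm\mu}$.
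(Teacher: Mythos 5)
Your proof is correct and is essentially the paper's own argument, which is precisely the chain Proposition~\ref{prop:primary} $\to$ Theorem~\ref{thm:rednil} $\to$ Theorem~\ref{thm:nilpotent}, with your final unfolding of $W_{\bm\lambda_i^*}(q_w,q_w^{-s})$ into the shifted local factors $\zeta_{\fo_{i,w}}\bigl((\bm\lambda_i^*)^{-1}(j)\dtimes s - j+1\bigr)$ being the intended immediate identification. One small slip worth fixing: the minimal polynomial of $A$ is $\prod_i f_i^{\lambda_{i,1}}$ (exponent the largest part), not $\prod_i f_i^{\abs{\bm\lambda_i}}$ (that is the characteristic polynomial); since $\Ker\bigl(f_i^{\abs{\bm\lambda_i}}(A)\bigr) = \Ker\bigl(f_i^{\lambda_{i,1}}(A)\bigr)$, the application of Proposition~\ref{prop:primary} and everything downstream is unaffected.
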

\begin{proof}
  Combine Proposition~\ref{prop:primary},
  Theorem~\ref{thm:rednil}, and Theorem~\ref{thm:nilpotent}.
\end{proof}

The following is a consequence of Proposition~\ref{prop:coneint} and well-known
rationality results from $p$-adic integration.

\begin{prop}[{Cf.\ \cite[\S 3]{GSS88}}]
  \label{prop:rationality}
  Let $K$ be a $p$-adic field and let $A  \in \Mat_n(\fO_K)$.
  Then $\zeta_{A,\fO_K}(s) \in \QQ(q_K^{-s})$.
  Hence, $\zeta_{A,\fO_K}(s)$ admits meromorphic continuation to all of
  $\CC$.
\end{prop}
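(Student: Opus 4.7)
The plan is to take the integral representation furnished by Proposition~\ref{prop:coneint} and invoke standard rationality results for $p$-adic integrals of Igusa/Denef type; the underlying framework is already laid out in \cite[\S 3]{GSS88} for subobject zeta functions in general, and no new ideas beyond those are needed here.

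First, by Proposition~\ref{prop:coneint}, we may write
\[
\zeta_{A,\fO_K}(s) = (1-q_K^{-1})^{-n} \int_{V_K(A)}
\abs{x_{11}}_K^{s-1}\abs{x_{22}}_K^{s-2}\dotsb\abs{x_{nn}}_K^{s-n}\,\dd\mu_K(\bm x),
\]
where the integrand is a product of powers of the coordinate functions $x_{ii}$ and the measure is Haar measure on the affine space $\Tr_n(K)$. The next step is to exhibit $V_K(A)$ as a subset of $\fO_K^{n(n+1)/2}$ defined by a Boolean combination of $p$-adic valuation inequalities among polynomials with coefficients in $\fO_K$ (in fact with coefficients determined by the entries of $A$). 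Indeed, since $\bm x \in \Tr_n^{\reg}(\fO_K)$ is upper triangular with non-zero diagonal, the membership condition $\fO_K^n\bm x A \subset \fO_K^n \bm x$ can be reduced by Gaussian elimination (solving for coefficients from the bottom row upwards, dividing by the diagonal entries) to a finite conjunction of conditions of the shape $\nu_K(g_j(\bm x)) \le \nu_K(h_j(\bm x))$ for suitable polynomials $g_j,h_j \in \fO_K[\bm X]$; equivalently, $V_K(A)$ is semi-algebraic in the sense of the $p$-adic analytic theory.

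Consequently the integral above is a $p$-adic integral of the type treated by the rationality machinery of Igusa, Denef, and Macintyre: the domain is semi-algebraic and the integrand is a product of powers $\abs{f_i(\bm x)}_K^{s_i}$ of polynomials. Applying Denef's rationality theorem (or its cone-integral refinement in \cite[\S 3]{GSS88}) shows that the integral is a rational function in $q_K^{-s}$ with coefficients in $\QQ$, so $\zeta_{A,\fO_K}(s) \in \QQ(q_K^{-s})$.

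Finally, since $q_K^{-s}$ is an entire function of $s$, any rational function of $q_K^{-s}$ is meromorphic on all of $\CC$, yielding the asserted meromorphic continuation. The only step that is not purely mechanical is the reduction of the row-inclusion condition $\fO_K^n \bm x A \subset \fO_K^n \bm x$ to an explicit list of polynomial valuation inequalities; this is where I would spend the most care, though it is a straightforward linear-algebra bookkeeping argument since $\det(\bm x) = \prod x_{ii}$ is already recorded by the diagonal coordinates.
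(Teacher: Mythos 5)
Your proposal is correct and is essentially the paper's own argument: the paper likewise deduces the proposition directly from the cone-integral representation of Proposition~\ref{prop:coneint} together with the standard rationality results for definable $p$-adic integrals (Igusa--Denef--Macintyre, as packaged in \cite[\S 3]{GSS88}), the membership condition $\fO_K^n\bm x A\subset\fO_K^n\bm x$ being expressible by polynomial valuation inequalities exactly as you describe. The concluding observation that a rational function of $q_K^{-s}$ is meromorphic on all of $\CC$ is also how the paper obtains the continuation, so there is nothing to add.
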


In order to deduce parts (\ref{Thm:global2})--(\ref{Thm:global3}) of
Theorem~\ref{Thm:global}, we will use the following corollary to the detailed
analysis of analytic properties of subobject zeta functions in \cite{dSG00}.
\begin{lemma}
  \label{lem:alpha}
  Let $S' \subset \Places_k$ be finite, $S\subset S'$, and let
  $A \in \Mat_n(\fo_S)$.
  Then $\alpha_{A,\fo_S} = \alpha_{A,\fo_{S'}}$
  and $\beta_{A,\fo_S} = \beta_{A,\fo_{S'}}$.
\end{lemma}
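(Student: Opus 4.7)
The plan is to use the Euler product to reduce the comparison of $\zeta_{A,\fo_S}(s)$ and $\zeta_{A,\fo_{S'}}(s)$ to the analysis of finitely many local factors, and then to appeal to the detailed analytic information from~\cite{dSG00}. Writing $P(s) := \prod_{v\in S'\setminus S}\zeta_{A,\fo_v}(s)$, the Euler product from the introduction gives
\[
\zeta_{A,\fo_S}(s) \;=\; P(s) \cdot \zeta_{A,\fo_{S'}}(s),
\]
and by Proposition~\ref{prop:rationality} each factor of $P(s)$ is a rational function of $q_v^{-s}$. Hence $P(s)$ is meromorphic on $\CC$ and is itself a Dirichlet series with non-negative coefficients and constant term~$1$.

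First I would establish the direction $\alpha_{A,\fo_{S'}} \le \alpha_{A,\fo_S}$ from the coefficient-wise bound $a_m(A,\fo_{S'}) \le a_m(A,\fo_S)$, a direct consequence of the Euler product: an $\fo_{S'}$-submodule of finite index corresponds to an $\fo_S$-submodule whose localisation at every $v\in S'\setminus S$ equals the whole of $\fo_v^n$.

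For the converse direction together with the equality of pole orders, the heart of the matter is to show that $P(s)$ is holomorphic and bounded below by $1$ in a neighbourhood of the real point $s=\alpha_{A,\fo_S}$. Once granted, Theorem~\ref{thm:dSG}(\ref{thm:dSG2}) (the only pole of $\zeta_{A,\fo_S}$ on the line $\Real(s)=\alpha_{A,\fo_S}$ is at $s=\alpha_{A,\fo_S}$, of order $\beta_{A,\fo_S}$) combined with the identity $\zeta_{A,\fo_{S'}}(s) = \zeta_{A,\fo_S}(s)/P(s)$ forces $\zeta_{A,\fo_{S'}}(s)$ to have a pole of exactly the same order at the same point; Landau's theorem for Dirichlet series with non-negative coefficients then gives $\alpha_{A,\fo_{S'}} \ge \alpha_{A,\fo_S}$ and $\beta_{A,\fo_{S'}} \ge \beta_{A,\fo_S}$, whence equality in both.

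The main obstacle is establishing, for each $v \in S'\setminus S$, the strict inequality $\sigma_v < \alpha_{A,\fo_S}$ for the abscissa $\sigma_v$ of $\zeta_{A,\fo_v}$ (the non-strict inequality $\sigma_v \le \alpha_{A,\fo_S}$ being automatic from the Euler product for $\fo_S$). For almost all $v$, Theorem~\ref{thm:local} displays $\zeta_{A,\fo_v}(s)$ as a product of local Dedekind factors $\zeta_{\fo_{i,w}}(\,\cdot\,)$ whose rightmost real pole lies at argument $0$, yielding a constant local abscissa $\sigma^{\mathrm{gen}}=\max_{i,j}(j-1)/(\bm\lambda_i^*)^{-1}(j)$; regrouping the Euler product of $\zeta_{A,\fo_S}$ over the generic places recovers global Dedekind factors $\zeta_{\fo_{i,S_i}}(\,\cdot\,)$ whose rightmost real pole lies at argument $1$, contributing the strictly larger quantity $\max_{i,j}j/(\bm\lambda_i^*)^{-1}(j)$ to $\alpha_{A,\fo_S}$. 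For the finitely many exceptional $v$ in $S'\setminus S$, I would absorb them into both $S$ and $S'$ by enlarging with a fixed finite set of places containing all exceptional ones, apply the already-established generic case to the enlarged pair, and conclude by transitivity.
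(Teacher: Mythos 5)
Your overall architecture is the paper's: factor $\zeta_{A,\fo_S}(s)=P(s)\,\zeta_{A,\fo_{S'}}(s)$, show that $P$ is finite and strictly positive at the real point $\alpha_{A,\fo_S}$, and then use non-negativity of coefficients (Landau) together with Theorem~\ref{thm:dSG} to transfer both the abscissa and the pole multiplicity. Your treatment of the generic places is also sound and non-circular: Theorem~\ref{thm:local} gives $\alpha_{A,\fo_v}=\max_{i,j}\tfrac{j-1}{(\bm\lambda_i^*)^{-1}(j)}$ for such $v$, while the sub-product of the Euler factors over the generic places already forces $\alpha_{A,\fo_S}\ge\max_{i,j}\tfrac{j}{(\bm\lambda_i^*)^{-1}(j)}$, which is strictly larger.

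The gap is the exceptional places, and your proposed fix is circular. Passing to the enlarged pair $(S\cup T,\,S'\cup T)$ with $T$ containing all exceptional places lets the generic case compare those two rings, but ``transitivity'' then requires $\alpha_{A,\fo_S}=\alpha_{A,\fo_{S\cup T}}$ and $\alpha_{A,\fo_{S'}}=\alpha_{A,\fo_{S'\cup T}}$ (and the analogous $\beta$-statements); these are instances of Lemma~\ref{lem:alpha} in which the difference set $T\setminus S$ consists precisely of exceptional places, i.e.\ exactly the case your argument cannot reach. What is genuinely needed is the strict inequality $\alpha_{A,\fo_v}<\alpha_{A,\fo_S}$ for \emph{every} $v\in\Places_k\setminus S$, exceptional places included: otherwise $P(s)$ could a priori have a pole at $s=\alpha_{A,\fo_S}$, and then the abscissa or the pole order could drop when passing to $S'$. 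The explicit formula gives no control at an exceptional $v$ — there one only knows $\zeta_{A,\fo_v}(s)\in\QQ(q_v^{-s})$ by Proposition~\ref{prop:rationality}, and the trivial bound $\alpha_{A,\fo_v}\le n-1$ (from counting all submodules) can exceed $\max_i\len{\bm\lambda_i}$. The paper closes this by a different input: it realises $\zeta_{A,\fo_S}(s+n)$ as an Euler product of cone integrals and invokes \cite{dSG00} (Cor.\ 3.4 combined with Cor.\ 4.14 and Lem.\ 4.15) to see that each local abscissa has the form $n-B_j/A_j$ while the global abscissa equals $n+\max_k\tfrac{1-B_k}{A_k}$, which strictly exceeds every local one, uniformly in $v\notin S$. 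You would need to import this (or some substitute valid at every place, not just generic ones) to complete the argument; with it, your Landau-type endgame goes through exactly as in the paper.
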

\begin{proof}
  We first argue that $\alpha_{A,\fo_v} < \alpha_{A,\fo_S}$ for each $v \in
  \Places_k\setminus S$.
  The zeta function $\zeta_{A,\fo_S}(s+n)$ is an Euler product of cone integrals
  (cf.\ Proposition~\ref{prop:coneint}) in the sense of \cite[Def.\ 4.2]{dSG00};
  cf.\ \cite[Cor.\ 5.6]{dSG00}.
  Using the notation from \cite{dSG00},
  by \cite[Cor.\ 3.4]{dSG00} (which is correct despite a minor, fixable mistake
  in \cite[Prop.\ 3.3]{dSG00}, see \cite[Rem.\ 4.6]{AKOV13}),
  it follows that each $\alpha_{A,\fo_v}$ for $v \in
  \Places_k\setminus S$ is a number of the form $n-B_j/A_j$ for $j = 1,\dotsc,q$.
  Hence, by combining \cite[Cor.\ 4.14, Lem.\ 4.15]{dSG00},
  for each $v \in \Places_k\setminus S$,
  $$\alpha_{A,\fo_v} < n + \max_{k=1,\dotsc,q} \frac{1-B_k}{A_k} =
  \alpha_{A,\fo_S}.$$

  Clearly, $0 < \alpha_{A,\fo_{S'}} \le \alpha_{A,\fo_S}$.
  Define $F(s) = \prod_{v\in S'\setminus S}\zeta_{A,\fo_v}(s)$ 
  so that
  $\zeta_{A,\fo_S}(s) = F(s)\zeta_{A,\fo_{S'}}(s)$
  for all $s \in \CC$ with $\mathrm{Re}(s) > \alpha_{A,\fo_S} - \delta$ and some
  constant $\delta > 0$ (see Theorem~\ref{thm:dSG}).
  By the above, every real pole of $F(s)$ is less than $\alpha_{A,\fo_S}$.
  Since $F(s)$ is a non-zero Dirichlet series with non-negative coefficients,
  we conclude that $F(\alpha_{A,\fo_S}) > 0$.
  In particular, since $\zeta_{A,\fo_S}(s)$ has a pole at $\alpha_{A,\fo_S}$,
  the same is true of $\zeta_{A,\fo_{S'}}(s)$  whence $\alpha_{A,\fo_{S'}} \ge
  \alpha_{A,\fo_S}$.
  Moreover, $F(\alpha_{A,\fo_S}) > 0$ clearly also implies that $\beta_{A,\fo_S} = \beta_{A,\fo_{S'}}$.
\end{proof}
\begin{rem}
  \label{rem:alphainv}
  \quad
\begin{enumerate}
  \item
  \label{rem:alphainv1}
    The corresponding statement for subalgebra and submodule zeta functions
    (proved in the same way) is certainly well-known to experts in the area.
    Unfortunately, it does not seem to have been spelled out in the literature.
    For a similar statement in the context of representation zeta functions,
    see~\cite[Thm~1.4]{AKOV16}.
  \item
    While in \cite{dSG00} only the case $k = \QQ$, $S = \emptyset$ is discussed,
    their arguments carry over to the present setting in the expected way
    (cf.\ \cite{AKOV13} and \cite[\S 4]{DV15}).
  \end{enumerate}
\end{rem}

\begin{proof}[Proof of Theorem~\ref{Thm:global}]
  Part (\ref{Thm:global1}) follows from Theorem~\ref{thm:local} and Proposition~\ref{prop:rationality}.
  Let $\bm\mu \parts n$. 
  We now determine the largest real pole,
  $\alpha$ say, and its multiplicity, $\beta$ say, of
  \[
  \Zeta(s) := \prod_{j=1}^n \zeta_{\fo_S}(\bm\mu^{-1}(j) \dtimes s - j + 1).
  \]
  Write $r = \len{\bm\mu}$.
  Since $\zeta_{\fo_S}(s)$ has a unique pole at $1$ (with multiplicity $1$) and
  $\zeta_{\fo_S}(s_0) \not= 0$ for real $s_0 > 1$,
  \begin{align*}
  \alpha &
  = \max_{1\le j \le n} \frac j {\bm\mu^{-1}(j)}
  = \max_{1\le i \le r} \max_{1\le j \le \lambda_i} \frac {\us_{i-1}(\bm\mu) +j} i
  = \max_{1\le i \le r} \frac {\us_{i}(\bm\mu)} i = \mu_1 = \len{\bm\mu^*},
  \end{align*}
  where the penultimate equality follows since
  $i \mu_{i+1} \le \us_i(\bm\mu)$
  and thus $\frac {\us_i(\bm\mu)}  i \ge
  \frac{\us_{i+1}(\bm\mu)}{i+1}$ for $1\le i \le r - 1$.
  Next, $\beta$ is precisely the 
  number of $i \in \{1,\dotsc,r\}$ with $\mu_1 = \frac{\us_i(\bm\mu)} i$ or,
  equivalently, the largest $\ell \ge 1$ with $\mu_1 = \dotso = \mu_\ell$.
  In other words, $\beta = \mu^*_{-1}$.

  Parts (\ref{Thm:global2})--(\ref{Thm:global3}) of Theorem~\ref{Thm:global}
  now follow from Lemma~\ref{lem:alpha} and the observation that
  $\Zeta(s) > 0$ for $s > \alpha$.
\end{proof}

\begin{ex}
  \label{ex:exceptional}
  The presence of the exceptional factors $W_u(q_{w_u}^{-s})$ in Theorem~\ref{Thm:global}
  is in general unavoidable.
  For a simple example, let $a \in \fo$ be non-zero and define $A
  = \bigl[\begin{smallmatrix} 0 & a \\ 0 & 0 \end{smallmatrix}\bigr]$. 
  Using Proposition~\ref{prop:coneint}, a simple computation reveals that for $v \in \Places_k$,
  \begin{equation}
    \label{eq:exceptional}
    \zeta_{A,\fo_v}(s) = \frac{1 - q_v^{1-2s} + q_v^{(1-s)(v(a)+1)} \dtimes (q_v^{-s}-1)}{1-q_v^{1-s}} \dtimes \zeta_{\fo_v}(s) \zeta_{\fo_v}(2s - 1);
  \end{equation}
  note that $\zeta_{A,\fo_v}(s) = \zeta_{\fo_v}(s) \zeta_{\fo_v}(2s - 1)$
  whenever $v(a) = 0$.
  We further note that the exceptional factor in \eqref{eq:exceptional}
  in fact belongs to $\ZZ[q_v^{-s}]$ and is thus regular at $s = 1$;
  this is consistent with the general fact that for subobject zeta functions,
  each local abscissa of convergence is strictly less than the associated global
  one (see the proof of Lemma~\ref{lem:alpha}). 
  Finally note the failure of \eqref{eq:feqn} for the finitely many $v \in \Places_k$ with $v(a) > 0$.
\end{ex}

\begin{rem}
  In view of a conjecture of Solomon proved by Bushnell and Reiner~\cite{BR80},
  it is natural to ask if the $W_u \in \QQ(X)$ in Theorem~\ref{Thm:global} are
  in fact always elements of $\ZZ[X]$.
\end{rem}

\begin{proof}[Proof of Theorem~\ref{Thm:FEqn}]
  The claim follows by combining Theorem~\ref{thm:local} and the following simple
  observation.
  Let $k'/k$ be an extension of number fields, let $\fo'$ be the ring of
  integers of~$k'$, and let $v \in \Places_k$ be unramified in $k'$.
  If $w \in \Places_{k'}$ divides $v$,
  define $\mathfrak f(w/v)$ by $q_w = q_v^{\mathfrak f(w/v)}$.
  Define
  \[
  \Zeta_v(s) = \prod_{\substack{w \in \Places_{k'}\\\divides w v}}
  \zeta_{\fo'_w}(s)
  = \prod_{\substack{w \in \Places_{k'}\\\divides w v}} \bigl (1-q_v^{-\mathfrak
    f(w/v)s} \bigr)^{-1}.
  \]
  Then, recalling the definition of $\noplaces_v(k')$ from p.\
  \pageref{lab:noplaces}
  and using $\sum\limits_{\divides w v} \mathfrak f(w/v) = \idx{k':k}$,
  \[
  \Zeta_v(s) \Bigm\vert_{q_v\to q_v^{-1}} = (-1)^{\noplaces_v(k')}
  q_v^{-\idx{k':k} s} \dtimes \Zeta_v(s). \popQED
  \]
\end{proof}

\begin{lemma}
  Let $S\subset \Places_k$ be finite.
  Let $\Zeta(s)$ and $\Zeta'(s)$ be two Dirichlet series with finite abscissae of
  convergence.
  Suppose that $\Zeta(s) = \prod_{v \in \Places_k\setminus S}\Zeta_v(s)$ and
  $\Zeta'(s) = \prod_{v \in \Places_k\setminus S}\Zeta'_v(s)$, where
  each $\Zeta_v(s)$ and $\Zeta'_v(s)$ is a series in $q_v^{-s}$ with non-negative
  real coefficients. 
  Suppose that $\Zeta(s) = \Zeta'(s)$ and that $W(X,Y),W'(X,Y) \in \QQ(X,Y)$ satisfy
  $\Zeta_v(s) = W(q_v,q_v^{-s})$ and $\Zeta'_v(s) = W'(q_v,q_v^{-s})$ for almost
  all $v\in \Places_k \setminus S$.
  Then $W(X,Y) = W'(X,Y)$.
\end{lemma}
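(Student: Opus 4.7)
The plan is to compare $p$-parts of the two Euler products for each rational prime $p$, to specialise to infinitely many rational primes that split completely in $k$, and then to conclude by a standard density argument in $\QQ(X,Y)$.

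First I would extract a normalisation: for the infinite product $\prod_v \Zeta_v(s)$ to converge to a Dirichlet series, we must have $\Zeta_v(s) = 1 + O(q_v^{-s})$ for almost all $v$, so $W(q_v,0) = 1$ for infinitely many $v$, which forces $W(X,0) = 1$ as an element of $\QQ(X)$, and likewise $W'(X,0) = 1$.  Second, fix a rational prime $p$ avoiding the finitely many ``bad'' primes (those with some $v \in S$ lying above, or with some $v \mid p$ in the exceptional set where $\Zeta_v \neq W(q_v,q_v^{-s})$ or $\Zeta'_v \neq W'(q_v,q_v^{-s})$).  Each local factor indexed by a place above some prime $\ell \neq p$ is a power series in $\ell^{-s}$ and hence contributes $1$ to the constant term and nothing to the terms $p^{-ks}$ with $k \ge 1$; collecting terms supported on powers of $p$ on both sides of $\Zeta(s) = \Zeta'(s)$ therefore yields
\[
\prod_{v \mid p} W(q_v,q_v^{-s}) \;=\; \prod_{v \mid p} W'(q_v,q_v^{-s}).
\]

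Third, I would apply Chebotarev's density theorem (to the Galois closure of $k/\QQ$) to select infinitely many such $p$ which additionally split completely in $k$.  For each, all $n := \idx{k:\QQ}$ places above $p$ satisfy $q_v = p$, so setting $T := p^{-s}$ the displayed identity reduces to
\[
W(p,T)^n = W'(p,T)^n \quad \text{in } \QQ(T).
\]
Since $F^n = G^n$ in $\overline{\QQ(T)}$ forces $F = \zeta G$ for some $n$-th root of unity $\zeta \in \overline{\QQ}$, and evaluation at $T = 0$ (where both $F$ and $G$ take the value $1$) forces $\zeta = 1$, I conclude $W(p,T) = W'(p,T)$ in $\QQ(T)$ for infinitely many primes $p$.

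Finally, writing $W - W' = P/Q$ with coprime $P, Q \in \QQ[X,Y]$ and $Q \neq 0$, and observing that $Q(p,Y) \in \QQ[Y]$ is non-zero for all but finitely many $p$, the previous step forces $P(p,T) \equiv 0$ in $\QQ[T]$ for infinitely many $p$.  Expanding $P(X,Y) = \sum_i P_i(X) Y^i$, each $P_i \in \QQ[X]$ vanishes at infinitely many integers, so $P_i \equiv 0$, hence $P \equiv 0$ and $W = W'$.  I expect the main subtlety to be the clean extraction of the $p$-part, which hinges on the normalisation $W(X,0) = 1$ forced by convergence of the Euler product; once this is in place, the remainder is a combination of Chebotarev's theorem, uniqueness of $n$-th roots of formal power series with constant term $1$, and the standard density result for rational functions over $\QQ$.
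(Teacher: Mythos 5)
Your proposal is correct and follows essentially the same route as the paper's proof: extract the $p$-parts of the two Euler products, use Chebotarev to specialise at infinitely many primes that split completely in $k$, obtain $W(p,p^{-s})^{[k:\QQ]} = W'(p,p^{-s})^{[k:\QQ]}$, and finish by a specialisation/density argument in $\QQ(X,Y)$. The only differences are minor: you eliminate the root of unity by evaluating at $Y=0$ via the normalisation $W(X,0)=1$ (a normalisation the paper itself uses tacitly when it concludes $\Zeta_p(s)=\Zeta'_p(s)$ from equality of Dirichlet coefficients), whereas the paper eliminates it by observing that $\RR$ is algebraically closed in $\RR(X,Y)$ and then invoking the non-negativity of the local coefficients.
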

\begin{proof}
  Let $S_0$ be the set of rational primes which are divisible by at least one element of~$S$.
  For a rational prime~$p\not\in S_0$,
  define $\Zeta_p(s) = \prod_{{v \in
      \Places_k, \divides v p}} \Zeta_v(s)$ and define $\Zeta'_p(s)$ in the same
  way.
  Assuming that $\Zeta(s) = \Zeta'(s)$, 
  it is well-known that the coefficients of the Dirichlet series $\Zeta(s)$ and
  $\Zeta'(s)$ coincide.
  We conclude that $\Zeta_p(s) = \Zeta'_{p}(s)$ for $p \not\in S_0$.
  By Chebotarev's density theorem, there exists an infinite set of rational
  primes $P$ such that each $p\in P$ splits completely 
  in $k$.
  Writing $d = \idx{k:\QQ}$, for almost all $p \in P$,
  we thus have $W(p,p^{-s})^d = \Zeta_p(s) = \Zeta'_p(s) = 
  W'(p,p^{-s})^d$ which easily implies $W(X,Y)^d = W'(X,Y)^d$.
  Thus, $W(X,Y)/W'(X,Y)$ is a $d$th root of unity in $\RR(X,Y)$ and hence in
  $\RR$, for the latter is algebraically closed in the former (see \cite[Prop.\
  11.3.1]{Coh03}).
  The non-negativity assumptions on the coefficients of $\Zeta_v(s)$ and
  $\Zeta'_v(s)$ as series in $q_v^{-s}$ now imply $W(X,Y) = W'(X,Y)$.
\end{proof}

\begin{proof}[Proof of Theorem~\ref{Thm:sim}]
The implications
``(\ref{Thm:sim1})$\Rightarrow$(\ref{Thm:sim2})$\Rightarrow$(\ref{Thm:sim3})''
in Theorem~\ref{Thm:sim} are obvious.
Suppose that (\ref{Thm:sim3}) holds.
Let $\bm\lambda$ and $\bm\mu$ be the types of the matrices $A$ and $B$,
respectively.
By Theorem~\ref{thm:local} and the preceding lemma, 
$W_{\bm\lambda}(X,Y) = W_{\bm\mu}(X,Y)$.
It is easy to see that the binomials $1-X^aY^b$ for $a \ge 0$ and $b \ge 1$
freely generate a free abelian subgroup of $\QQ(X,Y)^\times$.
Hence, $\bm\lambda = \bm\mu$ and $A$ and $B$ are similar.
\end{proof}

\begin{rem}
  \label{rem:nilpotency_required}
  If $A$ is nilpotent and $\alpha \in k^\times$, then $A$ and $A + \alpha 1_n$
  give rise to  the same local and global zeta functions without $A$ and $A +
  \alpha 1_n$ being similar.
  In general, equality of local and global zeta functions associated with
  non-nilpotent   matrices $A$ and $B$ does not suffice to even conclude that
  the algebras $k[A]$ and $k[B]$ are similar.
  We give two examples to illustrate this behaviour, the first being arithmetic
  and the second of combinatorial origin.
  \begin{enumerate}
    \item By \cite{Per77}, there are monic irreducible polynomials $f,g\in
      \ZZ[X]$ of the same degree such that the number fields $\QQ[X]/(f)$ and
      $\QQ[X]/(g)$ are non-isomorphic but have the same Dedekind zeta
      functions; moreover, as explained in \cite[\S 1]{Per77}, every rational prime
      has the same ``splitting type''  in each of these two number fields.
      Consequently, $\zeta_{\CM(f),\ZZ_p}(s) = \zeta_{\CM(g),\ZZ_p}(s)$ for almost
      all primes~$p$ 
    \item
      Recall the definition of $W_{\bm\lambda}$ from \S\ref{s:nilpotent}.
      A simple calculation shows that $$W_{(2,2,1)} \dtimes W_{(3,1)} =
      W_{(2,2)} \dtimes W_{(3,1,1)}.$$
      Let $a,b \in k^\times$  be distinct and choose $A,B \in \Mat_9(k)$
      to have elementary divisor vectors $((X-a,(3,2), (X-b,(2,1,1)))$ and
      $((X-a,(2,2)),(X-b,(3,1,1)))$, respectively.
      Then $k[A]$ and $k[B]$ are not similar but $\zeta_{A,\fo_v}(s) =
      \zeta_{B,\fo_v}(s)$ for almost all $v \in \Places_k$.
    \end{enumerate}
\end{rem}

\begin{rem}
We further note that even for nilpotent $A$, the family of associated functional
equations \eqref{eq:feqn} in Theorem~\ref{Thm:FEqn} does not determine $A$ up to
similarity; an example is given by two nilpotent $7\times 7$-matrices with types
$(3,1,1,1,1)$ and $(2,2,2,1)$, respectively.
\end{rem}

\begin{proof}[Proof of Theorem~\ref{Thm:poles}]
  By Theorem~\ref{thm:local},
  $\zeta_{A,\fo_v}(s)$ has a pole at zero for almost all $v \in \Places_k$.
  Moreover, again for almost all $v \in \Places_k$, this pole is simple if and
  only if $e = 1$ and almost all places of $k$ remain inert in $k[X]/(f_1)$;
  the latter condition is equivalent to $f_1$ being linear.
\end{proof}

%%%%%%%%%%%%%%%%%%%%%%%%%%%%%%%%%%%%%%%%%%%%%%%%%%%%%%%%%%%%%%%%%%%%%%%%
\section{Applications}
\label{s:app}
%%%%%%%%%%%%%%%%%%%%%%%%%%%%%%%%%%%%%%%%%%%%%%%%%%%%%%%%%%%%%%%%%%%%%%%%

%%%%%%%%%%%%%%%%%%%%%%%%%%%%%%%%%%%%%%%%%%%%%%%%
\subsection{Submodules for unipotent groups}
%%%%%%%%%%%%%%%%%%%%%%%%%%%%%%%%%%%%%%%%%%%%%%%%

Let $S \subset \Places_k$ be finite,
let $\mathsf M$ be a finitely generated $\fo_S$-module, and let $\Omega
\subset \End_{\fo_S}(\mathsf M)$.
We let $\alpha_{\Omega \acts \mathsf M}$ denote the abscissa of convergence of
$\zeta_{\Omega\acts \mathsf M}(s)$.
As a special case (cf.\ \cite[Rem.~2.2(ii)]{topzeta})), given a possibly
non-associative $\fo_S$-algebra $\mathsf A$ whose underlying $\fo_S$-module is
finitely generated, we let $\alpha_{\mathsf A}$ denote the abscissa of
convergence of its ideal zeta function $\zeta_{\mathsf A}(s)$, as defined in the introduction.
We now illustrate how Theorem~\ref{Thm:global} can sometimes be used to
determine $\alpha_{\Omega \acts \mathsf M}$ or $\alpha_{\mathsf A}$ without computing
the corresponding zeta function.
The key observation is that if $\omega \in \Omega$, then $\alpha_{\Omega \acts
  \mathsf M} \le \alpha_{\omega,\fo}$;
by Theorem~\ref{Thm:global}(\ref{Thm:global2}),
the latter number can be easily read off from an elementary divisor vector of
$\omega \otimes_{\fo_S} k$.

We let $\Uni_n$ denote the group scheme of upper unitriangular $n\times n$
matrices.
For $\bm\lambda = (\lambda_1,\dotsc,\lambda_r) \parts n$, we regard $\Uni_{\bm\lambda} :=
\Uni_{\lambda_1}\times \dotsb \times \Uni_{\lambda_r}$ as a subgroup scheme of
$\Uni_n$ via the natural diagonal embedding.
The case $\len{\bm\lambda} = 1$ of the following provides an affirmative answer to \cite[Question~9.7]{padzeta}.

\begin{prop}
  Let $\bm\lambda \parts n$.
  Then $\alpha_{\Uni_{\bm\lambda}(\fo) \acts \fo^n} = \len{\bm\lambda}$.
\end{prop}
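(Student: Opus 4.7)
The plan is to sandwich $\alpha_{\Uni_{\bm\lambda}(\fo) \acts \fo^n}$ between $\len{\bm\lambda}$ and $\len{\bm\lambda}$ by producing, on the one hand, a single element whose orbit abscissa realises the upper bound, and, on the other hand, a trivial-action quotient whose submodules account for the lower bound. Throughout, write $r := \len{\bm\lambda}$ and decompose $\fo^n = \fo^{\lambda_1} \oplus \dotsb \oplus \fo^{\lambda_r}$ according to the block structure of $\Uni_{\bm\lambda}$.

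\textbf{Upper bound.} Let $u \in \Uni_{\bm\lambda}(\fo)$ be the block-diagonal matrix whose $i$-th block is the standard $\lambda_i \times \lambda_i$ unitriangular Jordan block (i.e.\ $1$s on the diagonal and on the superdiagonal). Then $u - 1_n$ is nilpotent of type $\bm\lambda$, so $u$ has elementary divisor vector $((X-1,\bm\lambda))$ over $k$, with $e = 1$. Applying Theorem~\ref{Thm:global}(\ref{Thm:global2}) with $S = \emptyset$ gives $\alpha_{u,\fo} = \len{\bm\lambda}$. Since $u \in \Uni_{\bm\lambda}(\fo)$, every $\Uni_{\bm\lambda}(\fo)$-invariant submodule is $u$-invariant, hence $\zeta_{\Uni_{\bm\lambda}(\fo) \acts \fo^n}(s)$ is dominated coefficientwise by $\zeta_{u,\fo}(s)$, yielding $\alpha_{\Uni_{\bm\lambda}(\fo) \acts \fo^n} \le \alpha_{u,\fo} = \len{\bm\lambda}$; this is the general observation flagged just before the proposition.

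\textbf{Lower bound.} For $v \in \fo^{\lambda_i}$ acted on by $g \in \Uni_{\lambda_i}(\fo)$ from the right, the first coordinate of $vg$ coincides with that of $v$, because the first column of any upper unitriangular matrix is $e_1^{\mathrm T}$. Hence the $\fo$-linear map $\pi \colon \fo^n \to \fo^r$ extracting the first coordinate of each block is $\Uni_{\bm\lambda}(\fo)$-equivariant when $\fo^r$ carries the trivial action. Consequently, every finite-index submodule $U' \le \fo^r$ pulls back to an $\Uni_{\bm\lambda}(\fo)$-invariant submodule $\pi^{-1}(U') \le \fo^n$ of the same index $[\fo^r : U'] = [\fo^n : \pi^{-1}(U')]$, and distinct $U'$ give distinct pullbacks. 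Since the submodule zeta function of $\fo^r$ is $\prod_{j=0}^{r-1}\zeta_{\fo}(s-j)$, we obtain the coefficientwise lower bound
\[
\zeta_{\Uni_{\bm\lambda}(\fo) \acts \fo^n}(s) \ge \prod_{j=0}^{r-1} \zeta_\fo(s - j),
\]
whose rightmost pole is at $s = r$. Therefore $\alpha_{\Uni_{\bm\lambda}(\fo) \acts \fo^n} \ge r = \len{\bm\lambda}$, completing the proof.

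The only mildly delicate point is identifying the right invariant quotient for the lower bound; once one notices that $\Uni_{\bm\lambda}(\fo)$ fixes the first coordinate in each block and that this projection hits a rank-$r$ free module, everything else is immediate. No genuine obstacle is expected.
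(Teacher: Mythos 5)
Your proof is correct and follows essentially the same route as the paper: the upper bound uses the single element $1+\Nil(\bm\lambda)$ (your block-unitriangular Jordan matrix is exactly this) together with Theorem~\ref{Thm:global}(\ref{Thm:global2}), and the lower bound exhibits a rank-$\len{\bm\lambda}$ free quotient on which $\Uni_{\bm\lambda}(\fo)$ acts trivially. The only cosmetic difference is that you realise this quotient explicitly via the first-coordinate projection in each block, whereas the paper invokes the characterisation of $\Uni_m$ as the centraliser of a maximal flag.
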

\begin{proof}
  Using the characterisation of $\Uni_m(k)$ as the centraliser of a maximal
  flag of subspaces of $k^m$, we see that $\fo^n$ contains an
  $\Uni_{\bm\lambda}(\fo)$-invariant submodule $N$ such that $\Uni_{\bm\lambda}(\fo)$ acts
  trivially on $\fo^n/N$ and $\fo^n/N \approx_{\fo}  \fo^{\len{\bm\lambda}}$.
  We conclude that $\alpha_{\Uni_{\bm\lambda}(\fo) \acts \fo^n} \ge \len{\bm\lambda}$.
  For an upper bound, note that $(1 + \Nil(\bm\lambda)) \in \Uni_{\bm\lambda}(\fo)$ 
  whence $\alpha_{\Uni_{\bm\lambda}(\fo) \acts \fo^n} \le \alpha_{\Nil(\bm\lambda),\fo}
  = \len{\bm\lambda}$.
\end{proof}

For $\abs{\bm\lambda} \le 5$ and almost all $v \in \Places_k$,
explicit formulae for $\zeta_{\Uni_{\bm\lambda}(\fo_v)\acts \fo_v^n}(s)$ have been
obtained by the author (see \cite[\S 9.4]{padzeta} and the database
included with \cite{Zeta});
the only unknown case for $\len{\bm\lambda} = 6$, namely 
$\bm\lambda = (6)$, seems out of reach at present.
In addition to their global abscissae of convergence, the
$\zeta_{\Uni_{\bm\lambda}(\fo_v)\acts \fo_v^n}(s)$ are known to
generically satisfy local functional equations under inversion of $q_v$ by
\cite[\S 5.2]{Vol16}.

%%%%%%%%%%%%%%%%%%%%%%%%%%%%%%%%%%%%%%%%%%%%%%%%
\subsection{Lie algebras of maximal class}
%%%%%%%%%%%%%%%%%%%%%%%%%%%%%%%%%%%%%%%%%%%%%%%%

Let $\bm\fg$ be a finite-dimensional Lie $k$-algebra.
For finite $S\subset \Places_k$, by an \emph{$\fo_S$-form} of $\bm\fg$,
we mean a Lie $\fo_S$-algebra $\fg$ whose underlying module is free and such
that $\fg \otimes_{\fo_S} k \approx_k \bm\fg$.

Let $\bm\fg = \bm\fg^1 \supset \bm\fg^2\supset \dotsb$ be the lower central
series of $\bm\fg$.
Recall that $\bm\fg$ has \emph{maximal class} if $\bm\fg$ is nilpotent of class
$\dim_k(\bm\fg)-1$.
Equivalently, $\bm\fg$ has maximal class if and only if $\dim_k(\bm\fg^1/\bm\fg^{2}) = 2$ and $\dim_k(\bm\fg^i/\bm\fg^{i+1}) =
1$ for $1 \le i \le \dim_k(\bm\fg) - 1$.

\begin{prop}
  \label{prop:maxclass}
  Let $\fg$ be an $\fo_S$-form of a non-abelian finite-dimensional Lie
  $k$-algebra of maximal class. 
  Then $\alpha_{\fg} = 2$.
\end{prop}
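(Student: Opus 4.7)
The plan is to bound $\alpha_\fg$ above and below by $2$. The lower bound uses only the abelianization $\fg/\fg^2$, while the upper bound applies the strategy of the previous subsection to a carefully chosen element $x \in \fg$ whose adjoint action on $\bm\fg$ has Jordan type $(d-1,1)$, where $d := \dim_k \bm\fg$.

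For the lower bound, the derived subalgebra $\fg^2 := [\fg,\fg]$ is an ideal of $\fg$, and by the maximal class assumption $\fg/\fg^2$ is an $\fo_S$-module of generic rank $\dim_k \bm\fg/\bm\fg^2 = 2$. Every $\fo_S$-submodule of $\fg/\fg^2$ of finite index lifts to an ideal of $\fg$ containing $\fg^2$ (the quotient being abelian). After possibly enlarging $S$ (which does not affect the abscissa, by the analogue of Lemma~\ref{lem:alpha} for ideal zeta functions), we may assume $\fg/\fg^2$ is $\fo_S$-free of rank $2$. The zeta function counting only those ideals of $\fg$ that contain $\fg^2$ is then $\zeta_{\fo_S}(s)\zeta_{\fo_S}(s-1)$, which has abscissa $2$. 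Since $\zeta_\fg(s)$ dominates this coefficient-wise, $\alpha_\fg \ge 2$.

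For the upper bound, every ideal of $\fg$ is $\ad(x)$-invariant for each $x \in \fg$, so $\alpha_\fg \le \alpha_{\ad(x),\fo_S}$. By Theorem~\ref{Thm:global}(\ref{Thm:global2}), the latter equals the maximal $\len{\bm\lambda_i}$ in an elementary divisor vector of $\ad(x) \in \End_k(\bm\fg)$. It therefore suffices to exhibit $x \in \fg$ for which $\ad(x)$ is nilpotent of Jordan type $(d-1,1)$, as then $\alpha_{\ad(x),\fo_S} = 2$.

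The main obstacle is the construction of such an $x$. For $x \in \bm\fg \setminus \bm\fg^2$, the induced map $\ad(x)\colon \bm\fg^i/\bm\fg^{i+1} \to \bm\fg^{i+1}/\bm\fg^{i+2}$ depends only on the image of $x$ in $\bm\fg/\bm\fg^2 \cong k^2$ and is $k$-linear there; since the target is one-dimensional for $2 \le i \le d-2$, the vanishing locus of each such map is a hyperplane in $k^2$. Because $k$ is infinite, we can pick $x$ avoiding these finitely many exceptional hyperplanes, and after scaling we may take $x \in \fg$. Picking any $y \notin \bm\fg^2 + kx$, an induction along the lower central series shows that the iterates $y, [x,y], \ldots, (\ad x)^{d-2}(y)$ lie in successive strata $\bm\fg^i \setminus \bm\fg^{i+1}$ and are thus linearly independent; together with $x$, they form a $k$-basis of $\bm\fg$ in which $\ad(x)$ exhibits Jordan type $(d-1,1)$. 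Combining both bounds yields $\alpha_\fg = 2$.
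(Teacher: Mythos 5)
Your argument is correct and is essentially the paper's own proof: the lower bound comes from finite-index submodules of $\fg/\fg^2\approx_{\fo_S}\fo_S^2$ after enlarging $S$, and the upper bound produces, exactly as in Lemma~\ref{lem:goodbasis} (working in the associated graded algebra and avoiding finitely many proper subspaces of $\bm\fg/\bm\fg^2$ over the infinite field $k$), an element $x$ with $\ad(x)$ nilpotent of type $(d-1,1)$, to which Theorem~\ref{Thm:global}(\ref{Thm:global2}) and the observation $\alpha_{\fg}\le\alpha_{\ad(x),\fo_S}$ apply. The only point to spell out is why each vanishing locus is a \emph{proper} subspace: the induced maps are nonzero because $[\bm\fg,\bm\fg^i]=\bm\fg^{i+1}$ while $[\bm\fg^2,\bm\fg^i]\subset\bm\fg^{i+2}$, which is precisely the first step in the paper's proof of Lemma~\ref{lem:goodbasis}.
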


A proof of Proposition~\ref{prop:maxclass} using Theorem~\ref{Thm:global} will be given below.

We note that Proposition~\ref{prop:maxclass} is consistent with explicit calculations
carried out for specific $\ZZ$-forms of the 
Lie algebras $M_3$,$M_4$,$M_5$, and $\mathrm{Fil}_4$ of maximal class and
dimension at most $5$ over the rationals; see \cite[Ch.\ 2]{dSW08}.

\begin{lemma}
  Let $S\subset \Places_k$ be finite.
  Let $\fg$ be an $\fo_S$-form of a nilpotent Lie $k$-algebra of finite
  dimension $n$.
  Let $\mathsf A$ be the enveloping unital associative algebra of
  $\mathrm{ad}(\fg)$ within $\End_{\fo_S}(\fg)$.
  \begin{enumerate}
    \item For each $\varphi \in \mathsf A$, there exists $c \in \fo_S$ with $(\varphi - c
      1_{\fg})^n = 0$; thus, $\varphi \otimes_{\fo_S} k$ is primary.
    \item
      Let $\varphi \in \mathsf A$ have type $\bm\lambda$ over $k$.
      Then $\alpha_{\fg} \le \len{\bm\lambda}$.
    \end{enumerate}
\end{lemma}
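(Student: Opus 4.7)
My plan is to prove part~(i) by invoking Engel's theorem applied to $\mathrm{ad}(\fg \otimes_{\fo_S} k)$, and then to deduce part~(ii) from the evident inequality $\alpha_\fg \le \alpha_{\varphi,\fo_S}$ together with Theorem~\ref{Thm:global}(\ref{Thm:global2}).

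For part~(i), I would first note that since $\fg \otimes_{\fo_S} k$ is a nilpotent $k$-Lie algebra, every element of $\mathrm{ad}(\fg \otimes k) \subset \End_k(\fg \otimes k)$ is a nilpotent endomorphism. Engel's theorem then produces a $k$-basis of $\fg \otimes k$ in which every $\mathrm{ad}(x)$ is strictly upper triangular. Letting $\mathsf A^+ \subset \mathsf A$ denote the $\fo_S$-submodule spanned by products $\mathrm{ad}(x_1)\cdots\mathrm{ad}(x_r)$ with $r \ge 1$ and $x_i \in \fg$, it follows that $\mathsf A^+ \otimes_{\fo_S} k$ is contained in the strictly upper triangular $n \times n$ matrices, so every element of $\mathsf A^+$ has $n$-th power zero.

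Next, I would observe that $\mathsf A = \fo_S \cdot 1_\fg + \mathsf A^+$ by the very definition of $\mathsf A$ as a unital associative envelope, and that the sum is direct because a nonzero scalar multiple of $1_\fg$ cannot be nilpotent on the free $\fo_S$-module $\fg$. Hence every $\varphi \in \mathsf A$ decomposes uniquely as $\varphi = c \cdot 1_\fg + \psi$ with $c \in \fo_S$ and $\psi \in \mathsf A^+$, yielding $(\varphi - c \cdot 1_\fg)^n = \psi^n = 0$. In particular, the minimal polynomial of $\varphi \otimes k$ divides $(X - c)^n$, so $\varphi \otimes k$ is $(X - c)$-primary in the sense of \S\ref{s:rednilpotent}.

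For part~(ii), the key point is that every two-sided ideal of $\fg$ is by definition closed under $\mathrm{ad}(\fg)$ and is an $\fo_S$-submodule, hence is stable under the whole envelope $\mathsf A$, and in particular under $\varphi$. This yields a coefficient-wise majorisation $\zeta_\fg(s) \le \zeta_{\varphi,\fo_S}(s)$ on the associated Dirichlet series, so $\alpha_\fg \le \alpha_{\varphi,\fo_S}$. By part~(i), an elementary divisor vector of $\varphi \otimes k$ has the shape $((X - c, \bm\lambda))$ for some $c \in \fo_S$, and Theorem~\ref{Thm:global}(\ref{Thm:global2}) then gives $\alpha_{\varphi,\fo_S} = \len{\bm\lambda}$, completing the bound. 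The argument is largely routine once Engel's theorem and Theorem~\ref{Thm:global}(\ref{Thm:global2}) are available; the only step requiring mild care is verifying that the scalar $c$ in~(i) may be taken in $\fo_S$ rather than merely in $k$, which is precisely the content of the direct sum decomposition $\mathsf A = \fo_S \cdot 1_\fg \oplus \mathsf A^+$.
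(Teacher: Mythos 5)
Your proposal is correct and follows essentially the same route as the paper: Engel's theorem yields the decomposition $\varphi = c\,1_{\fg} + \psi$ with $\psi$ nilpotent for part~(i), and part~(ii) is the coefficient-wise majorisation $\zeta_{\fg}(s) \le \zeta_{\varphi,\fo_S}(s)$ (the paper's ``key observation'' in \S\ref{s:app}) combined with Theorem~\ref{Thm:global}(\ref{Thm:global2}) applied to the single-entry elementary divisor vector $((X-c,\bm\lambda))$. The paper merely states this in one sentence; your write-up supplies the same argument in detail, and the extra care about $c$ lying in $\fo_S$ (via $\mathsf A = \fo_S 1_{\fg} + \mathsf A^+$) is sound.
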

\begin{proof}
  The first part follows from Engel's theorem and the second part is then an
  immediate consequence of Theorem~\ref{Thm:global}(\ref{Thm:global2}).
\end{proof}

\begin{lemma}
  \label{lem:goodbasis}
  Let $\bm\fg$ be an $(n+2)$-dimensional non-abelian Lie $k$-algebra of maximal class.
  Then there exists a $k$-basis $(x_1,x_2,y_1,\dotsc,y_n)$ of $\bm\fg$ such that
  $[x_1,x_2] = y_1$, 
  $[x_1,y_i] = y_{i+1}$ for $1 \le i \le n-1$, and $[x_1,y_n] = 0$.
\end{lemma}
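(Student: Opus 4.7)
The plan is to build the basis so that a single element $x_1$ acts as a ``shift'' on the lower central series, sending each graded piece isomorphically onto the next. Once $x_1$ is chosen, the vectors $y_i$ will be manufactured by iterated application of $\ad(x_1)$, starting from $y_1 := [x_1,x_2]$ for a suitable $x_2$; the identity $\bm\fg^{n+2} = 0$ will then take care of the final relation $[x_1,y_n] = 0$.

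To locate $x_1$, I would introduce, for each $j = 1,\dotsc,n$, the subspace
\[
H_j := \bigl\{ x \in \bm\fg : [x, \bm\fg^j] \subseteq \bm\fg^{j+2} \bigr\}.
\]
Since $[\bm\fg, \bm\fg^j] = \bm\fg^{j+1}$ and the maximal-class hypothesis gives $\bm\fg^{j+1} \neq \bm\fg^{j+2}$ for $1 \le j \le n$, each $H_j$ is a proper subspace of $\bm\fg$; the subspace $\bm\fg^2$ is proper for the same reason. Because $k$ is a number field and hence infinite, a finite union of proper subspaces cannot exhaust $\bm\fg$, so I may pick $x_1 \in \bm\fg \setminus (\bm\fg^2 \cup H_1 \cup \dotsb \cup H_n)$.

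Since $x_1 \notin H_1$, there exists $x_2 \in \bm\fg$ with $[x_1,x_2] \notin \bm\fg^3$; such an $x_2$ cannot lie in $kx_1 + \bm\fg^2$, so $x_1$ and $x_2$ project to a basis of the $2$-dimensional quotient $\bm\fg/\bm\fg^2$. Set $y_1 := [x_1,x_2]$ and, recursively, $y_{i+1} := [x_1,y_i]$ for $1 \le i \le n-1$. The condition $x_1 \notin H_{i+1}$ forces the induced map $\ad(x_1)\colon \bm\fg^{i+1}/\bm\fg^{i+2} \to \bm\fg^{i+2}/\bm\fg^{i+3}$ to be nonzero, hence (by one-dimensionality) an isomorphism, so an easy induction gives $y_i \in \bm\fg^{i+1} \setminus \bm\fg^{i+2}$ for $1 \le i \le n$. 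In particular $y_n$ generates $\bm\fg^{n+1}$, and $[x_1,y_n] \in \bm\fg^{n+2} = 0$ automatically. A standard filtration argument (each $y_i$ generates the one-dimensional piece $\bm\fg^{i+1}/\bm\fg^{i+2}$, while $x_1,x_2$ generate $\bm\fg/\bm\fg^2$) then shows that $(x_1,x_2,y_1,\dotsc,y_n)$ is a $k$-basis of $\bm\fg$ with the desired bracket relations.

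The only genuine obstacle is the simultaneous avoidance step: producing a single $x_1$ that escapes every $H_j$ at once. This relies crucially on the infinitude of $k$ and on the maximal-class condition $\dim_k(\bm\fg^{j+1}/\bm\fg^{j+2}) = 1$ for $j \le n$, which is precisely what makes each $H_j$ a proper subspace of $\bm\fg$; without this, no uniform shift element need exist.
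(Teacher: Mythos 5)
Your proof is correct and follows essentially the same route as the paper: both arguments produce a single element $x_1$ whose adjoint action maps each graded piece $\bm\fg^i/\bm\fg^{i+1}$ onto the next and then construct the $y_i$ by iterating $\ad(x_1)$, with the final relation coming from $\bm\fg^{n+2}=0$. The only difference is in how genericity is implemented: you pick $x_1$ by avoiding the finitely many proper subspaces $H_j$ (using that $k$ is infinite), whereas the paper takes a generic member $u+cv$ of a pencil in $\bm\fg/\bm\fg^2$ --- the same idea in slightly different clothing.
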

\begin{proof}
  Consider the graded Lie algebra $\bigoplus_{i\ge 1} \bm\fg^i/\bm\fg^{i+1}$
  associated with $\bm\fg$.
  We claim that there exists an element $a \in \bm\fg/\bm\fg^2$ such 
  that $[a,\dtimes]$ maps $\bm\fg^i/\bm\fg^{i+1}$ onto
  $\bm\fg^{i+1}/\bm\fg^{i+2}$ for each $i \ge 1$.
  To see that, first note that $[\bm\fg/\bm\fg^2,\bm\fg^i/\bm\fg^{i+1}] =
  \bm\fg^{i+1}/\bm\fg^{i+2}$ for each $i \ge 1$.
  Let $(u,v)$ be a $k$-basis of $\bm\fg/\bm\fg^2$.
  Then $[u,v]$ spans $\bm\fg^2/\bm\fg^3$.
  Moreover, if $w_i$ spans $\bm\fg^i/\bm\fg^{i+1}$,
  then the image of at least one of $[u,w_i]$ and $[v,w_i]$ spans
  $\bm\fg^{i+1}/\bm\fg^{i+2}$.
  Consequently, we may take $a = u + cv$ for almost all $c \in k$.

  Given $a$ as above, choose $b \in \bm\fg/\bm\fg^2$ such that $(a,b)$ is a basis of
  $\bm\fg/\bm\fg^2$.
  Let $x_1, x_2 \in \bm\fg$ be preimages of $a$ and $b$, respectively.
  Then, if we define $y_1 = [x_1,x_2]$ and $y_{i+1} = [x_1,y_i]$, we obtain a
  basis $(x_1,x_2,y_1,\dotsc,y_n)$ of the desired form.
\end{proof}

\begin{proof}[Proof of Proposition~\ref{prop:maxclass}]
  By Lemma~\ref{lem:alpha} and Remark~\ref{rem:alphainv}(\ref{rem:alphainv1}),
  we are free to enlarge $S$ as needed.
  In particular, we may assume that $\fg/\fg^2 \approx_{\fo_S} \fo_S^2$ whence
  $\alpha_{\fg} \ge 2$ follows.
  Moreover, we may assume
  that $\fg$ possesses an $\fo_S$-basis $(x_1,x_2,y_1,\dotsc,y_n)$ as in by
  Lemma~\ref{lem:goodbasis}.
  The matrix of $[x_1,\dtimes]$ with respect to the basis
  $(x_2,y_1,\dotsc,y_n,x_1)$ is precisely $\Nil((n+1,1))$ whence
  $\alpha_{\fg} \le 2$ follows from Theorem~\ref{Thm:global}.
\end{proof}

%%%%%%%%%%%%%%%%%%%%%%%%%%%%%%%%%%%%%%%%%%%%%%%%%%%%%%%%%%%%%%%%%%%%%%%%
% References
%%%%%%%%%%%%%%%%%%%%%%%%%%%%%%%%%%%%%%%%%%%%%%%%%%%%%%%%%%%%%%%%%%%%%%%%
{
  \bibliographystyle{abbrv}
  \tiny
  \bibliography{cyclic}
}

\end{document}